\renewenvironment{proof}[1][\proofname]{{\bfseries #1.} }{\qed}
\newtheorem{theorem}{Theorem}
\newtheorem{condition}[theorem]{Assumption}
\newtheorem{definition}[theorem]{Definition}
\newtheorem{lemma}[theorem]{Lemma}
\newtheorem{proposition}[theorem]{Proposition}
\newtheorem{remark}[theorem]{Remark}
\begin{document}

\title{\textsc{Fluctuations of Level Curves for Time-Dependent Spherical
Random Fields} }
\author{Domenico Marinucci \\
%EndAName
\emph{Dipartimento di Matematica, Universit\`{a} di Roma ``Tor Vergata''} \and Maurizia
Rossi \\
%EndAName
\emph{Dipartimento di Matematica e Applicazioni, Universit\`{a} di Milano-Bicocca}
\and Anna Vidotto \\
%EndAName
\emph{Dipartimento di Matematica e Applicazioni, Universit\`{a} di Napoli
``Federico II''}}
\date{{\footnotesize \today}}
\maketitle

\begin{abstract}
The investigation of the behaviour for geometric functionals of random
fields on manifolds has drawn recently considerable attention. In this
paper, we extend this framework by considering fluctuations over time for
the level curves of general isotropic Gaussian spherical random fields. We
focus on both long and short memory assumptions; in the former case, we show
that the fluctuations of $u$-level curves are dominated by a single
component, corresponding to a second-order chaos evaluated on a subset of
the multipole components for the random field. We prove the existence of
cancellation points where the variance is asymptotically of smaller order;
these points do not include the nodal case $u = 0$, in marked contrast with
recent results on the high-frequency behaviour of nodal lines for random
eigenfunctions with no temporal dependence. In the short memory case, we
show that all chaoses contribute in the limit, no cancellation occurs and a
Central Limit Theorem can be established by Fourth-Moment Theorems and a
Breuer-Major argument.

\smallskip

\noindent\textbf{Keywords and Phrases:} Sphere-cross-time random fields;
Level curves and nodal lines; Berry's cancellation; Central and non-Central
Limit Theorems.

\smallskip

\noindent \textbf{AMS Classification:} 60G60; 60F05, 60D05, 33C55.
\end{abstract}

\allowdisplaybreaks

%\tableofcontents

\section{Background and notation}

The analysis of level curves for random fields is a very classical topic in
stochastic geometry. In particular, many efforts have focussed
on the investigation of level-zero curves (i.e., nodal lines) in the case of
random eigenfunctions, in the high-frequency regime where eigenvalues are
assumed to diverge to infinity; see for instance \cite{Wig, MPRW16, NPR19}, or more generally \cite{Wig22} for a recent overview. In the
same high-energy regime, other functionals for random eigenfunctions
(including excursion area, the Euler-Poincar\'e characteristic, the number of
critical points) have also been widely investigated, see for instance \cite%
{M2022}; on the other hand, these same functionals have also been considered
by different authors in the asymptotic regime where the spatial domain of
the field is assumed as growing, notable examples being \cite{KL01} (for
level curves) and \cite{EL2016} (for the Euler-Poincar\'{e} characteristic).

Our purpose in this paper is to study the behaviour of level curves under a
different asymptotic regime than so far considered, namely for sphere
cross-time random fields and taking into account the averaged fluctuations
over time around the expected value (a similar framework was considered for
the case of the excursion area in \cite{MRV:20}). Our asymptotic results
share some analogies with the different settings that we mentioned above,
but they also show very important differences that we shall discuss below in
greater detail. While this paper only focusses on theoretical aspects, it is
really not difficult to envisage application areas where sphere cross-time
random fields emerge very naturally, some examples being atmospheric and
climate data (the sphere representing the surface of the Earth, see \cite%
{Ch:17}).

\subsection{Time-dependent spherical random fields}

We start by recalling the notion of space-time spherical random field, along
with the corresponding spectral representation, which allows the
characterization of long and short range dependence properties. Our
assumptions and discussion is close to the one that can be found in \cite%
{MRV:20}.

More precisely, let us take a probability space $(\Omega ,\mathfrak{F},%
\mathbb{P})$ and denote by $\mathbb{E}$ the expectation under $\mathbb{P}$:
all random objects in this manuscript are defined on this common probability
space, unless otherwise specified. Let $\mathbb{S}^{2}$ denote the
two-dimensional unit sphere with the round metric, usually written in the
form
\begin{equation*}
ds^{2}=d\theta ^{2}+\sin ^{2}\theta d\varphi ^{2}
\end{equation*}%
for standard spherical coordinates $(\theta ,\varphi )$, where $\theta $ is
the colatitude and $\varphi $ the longitude. A space-time real-valued
spherical random field $Z$ is a collection of real random variables indexed
by $\mathbb{S}^{2}\times \mathbb{R}$
\begin{equation}
{Z=\{Z(x,t),\,x\in \mathbb{S}^{2},\,t\in \mathbb{R}\}}  \label{eq1}
\end{equation}%
such that the function $Z:\Omega \times \mathbb{S}^{2}\times \mathbb{R}%
\rightarrow \mathbb{R}$ is $\mathfrak{F}\otimes \mathfrak{B}(\mathbb{S}%
^{2}\times \mathbb{R})$-measurable, {$\mathfrak{B}(\mathbb{S}^{2}\times
\mathbb{R})$ being the Borel $\sigma $-field of $\mathbb{S}^{2}\times
\mathbb{R}$}. The following condition is standard.

\begin{condition}
\label{basic} The space-time real-valued spherical random field $Z$ in %
\eqref{eq1} is

\begin{itemize}
\item Gaussian, i.e.~its finite dimensional distributions are Gaussian;

\item centered, that is, $\mathbb{E}[Z(x,t)] = 0$ for every $x\in \mathbb{S}%
^2$, $t\in \mathbb{R}$;

\item isotropic (in space) and stationary (in time), namely
\begin{equation}  \label{Gamma}
\mathbb{E}[Z(x,t) Z(y,s)] = \Gamma( \langle x,y\rangle, t-s)
\end{equation}
for every $x,y\in \mathbb{S}^2$, $t,s \in \mathbb{R}$, where $\langle x,
y\rangle$ denotes the standard inner product in $\mathbb{R}^3$ and $%
\Gamma:[-1,1]\times \mathbb{R }\to \mathbb{R}$ is a positive semidefinite
function;

\item mean square continuous, i.e. $\Gamma$ is continuous.
\end{itemize}
\end{condition}

Assumption \ref{basic} collects the common background with basically all the
previous literature, starting from \cite{adlertaylor}, on the geometry of
excursion sets of (time-varying) random fields on manifolds, see also \cite%
{BP17, LO13, MRV:20}. \emph{From now on we assume that $Z$ in \eqref{eq1}
satisfies Assumption \ref{basic}.} In order to ensure that our functional of
interest is well defined, we will need the following further condition.

\begin{condition}
\label{regular} The sample paths of the space-time real-valued spherical
random field $Z$ in \eqref{eq1} are a.s. $\mathcal{C}^1 (\mathbb{S}^2\times
\mathbb{R})$ functions.
\end{condition}

It is worth noticing that if the covariance kernel in (\ref{Gamma}) is a
twice continuously differentiable function, then there exists a modification
of $Z$ satisfying Assumption \ref{regular}.

\emph{From now on we assume that $Z$ in \eqref{eq1} satisfies also
Assumption \ref{regular}}; however, it is plausible that slightly weaker
regularity properties suffice. Note that the spatial gradient
\begin{equation*}
\nabla Z :=\lbrace \nabla Z(x,t), (x,t)\in \mathbb{S}^2\times \mathbb{R}%
\rbrace
\end{equation*}
is a centered Gaussian random field indexed by $\mathbb{S}^2\times \mathbb{R}
$ whose covariance kernel is the spatial Hessian of $\Gamma$ in (\ref{Gamma}%
).

Under Assumption \ref{basic} it is well known (see e.g. \cite[Theorem 3.3]%
{BP17} and \cite[Theorem 3]{MM18}) that the covariance function $\Gamma$ in %
\eqref{Gamma} of $Z$ in (\ref{eq1}) can be written as a uniformly convergent
series of the form
\begin{equation}  \label{eq2}
\Gamma(\eta, \tau) = \sum_{\ell=0}^{+\infty} \frac{2\ell+1}{4\pi}
C_\ell(\tau) P_\ell(\eta)\,,\quad (\eta, \tau) \in [-1,1] \times \mathbb{R}%
\,,
\end{equation}
where $\lbrace C_\ell, \ell\ge 0\rbrace$ is a sequence of continuous
positive semidefinite functions on the real line and $\lbrace P_\ell,
\ell\ge 0\rbrace$ stands for the sequence of Legendre polynomials: $%
\int_{-1}^1 P_{\ell}(t) P_{\ell^{\prime }}(t)\,dt= \frac{2}{2\ell+1}%
\delta_{\ell}^{\ell^{\prime }}, $ $\delta_{\ell}^{\ell^{\prime }}$ denoting
the Kronecker delta, see \cite[Section 4.7]{Sze75}. Note that the uniform
convergence of the series (\ref{eq2}) is equivalent to
\begin{equation*}
\sum_{\ell = 0}^{+\infty} \frac{2\ell+1}{4\pi} C_\ell(0) < +\infty
\end{equation*}
($C_\ell(0)\ge 0$ for every $\ell\ge 0$). Under Assumption \ref{regular},
the spatial derivatives up to order two of the covariance function admit a
series representation of the form (\ref{eq2}) with the Legendre polynomials
replaced by their derivatives: uniformly for $(\eta, \tau) \in [-1,1] \times
\mathbb{R}$
\begin{equation}  \label{eqDer}
\frac{\partial}{\partial \eta}\Gamma(\eta, \tau)= \sum_{\ell=0}^{+\infty}
\frac{2\ell+1}{4\pi} C_\ell(\tau) \frac{\partial}{\partial \eta} P_\ell(\eta
),\quad \frac{\partial^2}{\partial \eta^2}\Gamma(\eta, \tau) =
\sum_{\ell=0}^{+\infty} \frac{2\ell+1}{4\pi} C_\ell(\tau) \frac{\partial^2}{%
\partial \eta^2}P_\ell(\eta ).
\end{equation}
On the other hand, the uniform convergence of the series (\ref{eqDer}) is
equivalent to
\begin{equation}  \label{eqConv}
\sum_{\ell = 0}^{+\infty} \frac{2\ell+1}{4\pi} C_\ell(0) \cdot \ell^2 <
+\infty.
\end{equation}
An analogous result holds for time (and mixed) derivatives of $\Gamma$ still
up to order two but we omit the details since we will not need them in this
manuscript.

Let us now introduce some more notation: first denote by $\lbrace
Y_{\ell,m}, \ell \ge 0, m=-\ell, \dots, \ell\rbrace$ the standard real
orthonormal basis of spherical harmonics \cite[Section 3.4]{MaPeCUP} for $%
L^2(\mathbb{S}^2)$, then define for $\ell\in \mathbb{N}, m=-\ell,\dots,\ell$%
,
\begin{equation}  \label{eq3}
a_{\ell,m}(t) := \int_{\mathbb{S}^2} Z(x,t) Y_{\ell,m}(x)\,dx,\quad t\in
\mathbb{R}.
\end{equation}
From (\ref{eq3}) we deduce that $\lbrace a_{\ell,m}, \ell \ge 0, m=-\ell,
\dots, \ell\rbrace$ is a family of independent, stationary, centered,
Gaussian processes on the real line such that for every $t,s \in \mathbb{R}$
\begin{equation*}
\mathbb{E}[a_{\ell,m}(t) a_{\ell,m}(s) ] = C_\ell(t-s).
\end{equation*}
The spectral representation (\ref{eq2}) for $\Gamma$ allows to deduce the
so-called Karhunen-Lo\`eve expansion for $Z$:
\begin{equation}  \label{eqSeries}
Z(x,t) = \sum_{\ell =0}^{+\infty} \sum_{m=-\ell}^\ell a_{\ell,m}(t)
Y_{\ell,m}(x),
\end{equation}
where the stochastic processes $a_{\ell,m}$ are defined as in (\ref{eq3}),
and the series (\ref{eqSeries}) converges in $L^2(\Omega \times \mathbb{S}^2
\times [0,T])$ for any $T>0$. For $\mathbb{S}^2\ni x=(\theta_x, \varphi_x)$
we use the notation
\begin{equation*}
\partial_{1;x} = \frac{\partial}{\partial \theta}|_{\theta=\theta_x},\quad
\partial_{2;x}= \frac{1}{\sin\theta}\frac{\partial}{\partial}%
|_{\theta=\theta_x, \varphi=\varphi_x}.
\end{equation*}
Analogously, (\ref{eqDer}) ensures that, for $j=1,2$,
\begin{equation}  \label{eqSeries_gradient}
\partial_{j;x} Z(x,t) = \sum_{\ell =0}^{+\infty} \sum_{m=-\ell}^\ell
a_{\ell,m}(t) \partial_{j;x} Y_{\ell,m}(x),
\end{equation}
where the convergence still holds in $L^2(\Omega \times \mathbb{S}^2 \times
[0,T])$.

From now on we can restrict ourselves to
\begin{equation*}
\widetilde{\mathbb{N}} := \lbrace \ell \ge 0 : C_\ell(0)\ne 0\rbrace
\end{equation*}
without loss of generality.

\subsubsection{Time-dependent random spherical eigenfunctions}

\label{sec-rsh}

Let us define
\begin{equation}  \label{eqZell}
Z_\ell(x,t) := \sum_{m=-\ell}^\ell a_{\ell,m}(t) Y_{\ell,m}(x)\, , \quad
(x,t)\in \mathbb{S}^2\times \mathbb{R};
\end{equation}
by construction, $\lbrace Z_\ell, \ell\in \widetilde{\mathbb{N}}\rbrace$ is
a sequence of independent random fields and each $Z_\ell(\cdot, t)$ almost
surely solves the Helmholtz equation $\Delta Z_{\ell }(\cdot,
t)+\lambda_\ell Z_{\ell }(\cdot, t)=0, $ where $\Delta$ is the spherical
Laplacian and $\lambda_\ell:=\ell (\ell +1)$ is the $\ell$-th eigenvalue, in
coordinates
\begin{equation*}
\Delta = \frac{1}{\sin \theta} \frac{\partial}{\partial \theta} \left ( \sin
\theta \frac{\partial}{\partial \theta} \right ) + \frac{1}{\sin^2\theta}
\frac{\partial^2}{\partial \varphi^2}.
\end{equation*}
For the sake of notational simplicity we will assume that (cf. (\ref{eqConv}%
))
\begin{equation}  \label{somma1}
\sigma _{0}^{2}:=\mathbb{E}\left[ Z^{2}(x,t)\right] =\sum_{\ell\in
\widetilde{\mathbb{N}}} \mathbb{E}[Z_\ell(x,t)^2]=\sum_{\ell\in \widetilde{%
\mathbb{N}}}\frac{2\ell +1}{4\pi }C_{\ell }(0)=1.
\end{equation}

%\begin{remark}\rm
%We will investigate the asymptotic behavior of $Z$ in $\mathbb S^2\times [0,T]$ as $T\to +\infty$, in particular
%the set of attainable laws of its geometrical functionals. Our proofs will rely on $L^2(\mathbb P)$-bounds, \emph{uniform} with respect to the parameter $T$, thus allowing a fruitful use of the expansion in (\ref{eqSeries}) which actually holds true for every \emph{fixed} $T>0$.
%\end{remark}

\textbf{Some conventions}. From now on, $c\in (0, +\infty)$ will stand for a
universal constant which may change from line to line. Let $\lbrace a_n,
n\ge 0\rbrace$, $\lbrace b_n, n\ge 0\rbrace$ be two sequences of positive
numbers: we will write $a_n \sim b_n$ if $a_n/b_n \to 1$ as $n\to +\infty$, $%
a_n \approx b_n$ whenever $a_n/b_n \to c$, $a_n = o(b_n)$ if $a_n/b_n \to 0$%
, and finally $a_n = O(b_n)$ or equivalently $a_n \ll b_n$ if eventually $%
a_n/b_n \le c$.

\subsection{Time dependence properties}

% QUESTO NON LO USIAMO E L'ABBIAMO DETTO NELL'ALTRO ARTICOLO - IO LO TOGLIEREI

%For $\ell\in \widetilde{\mathbb{N}}$, Bochner Theorem ensures that there
%exists a probability measure $\mu_\ell$ on $(\mathbb{R}, \mathfrak{B}(%
%\mathbb{R}))$ such that
%\begin{equation*}
%\frac{C_\ell(\tau)}{C_\ell(0)} = \int_{\mathbb{R}} \text{e}^{i\lambda
%\tau}\,d\mu_\ell(\lambda)\,,\qquad \tau\in \mathbb{R}\,.
%\end{equation*}
%If $\mu_\ell$ is absolutely continuous with respect to the Lebesgue measure,
%then we may  introduce the normalized spectral density as the function $%
%f_{\ell }:\mathbb{R\rightarrow R}^{+}$ such that
%\begin{equation}  \label{sp}
%\frac{C_\ell(\tau)}{C_\ell(0)} = \int_{\mathbb{R }}\text{e}^{i\lambda
%\tau}f_{\ell }(\lambda )\,d\lambda\,,\qquad \tau\in \mathbb{R}\,;
%\end{equation}%
%we have of course $\int_{\mathbb{R}}f_{\ell }(\lambda )\,d\lambda =1\,. $ If
%$C_\ell$ is integrable on $\mathbb{R}$, then clearly $f_\ell$ exists. %
%\textcolor{red}{check: add the regularity properties of $f_\ell$ under
%Assumption \ref{regular}, be sure that the rest of this Section is coherent
%with this new regularity assumption}.

As in \cite{MRV:20}, let us now define the family of symmetric real-valued
functions $\lbrace g_\beta, \beta\in (0,1]\rbrace$ as follows:
\begin{equation}  \label{gbeta}
g_\beta(\tau) =
\begin{cases}
(1 + |\tau|)^{-\beta}\, & \text{if }\,\, \beta\in (0,1) \\
(1+|\tau|)^{-\alpha}\, & \text{if }\,\, \beta = 1%
\end{cases}%
\,,
\end{equation}
for some $\alpha \in [2,+\infty)$. We do believe that the assumption $\alpha
\in [2,+\infty)$ is not essential for the validity of our main findings;
indeed it seems likely that it can be replaced with $\alpha \in (1, +\infty)$%
. Nevertheless, the current formulation is instrumental to take advantage of
some technical results provided in \cite{MRV:20}.

\begin{condition}
\label{basic3} There exists a sequence $\lbrace \beta_\ell\in (0,1], \ell
\in \widetilde{\mathbb{N}}\rbrace$ such that
\begin{equation*}
C_\ell(\tau) = G_\ell(\tau) \cdot g_{\beta_\ell}(\tau), \qquad \ell\in
\widetilde{\mathbb{N}},
\end{equation*}
where $g_{\beta_\ell}$ is as in \eqref{gbeta} and
\begin{equation*}
\sup_{\ell\in \widetilde{\mathbb{N}}} \left | \frac{G_\ell(\tau)}{C_\ell(0)}
- 1\right | =o(1), \quad \text{as }\tau\to +\infty\,.
\end{equation*}
Moreover $0\in \widetilde{\mathbb{N}}$ (that is, $C_0(0)\ne 0$) and if $%
\beta_0=1$ then
\begin{equation*}
\int_{\mathbb{R}} C_0 (\tau)\, d\tau > 0\,.
\end{equation*}
\end{condition}

\emph{From now on we assume that Assumption \ref{basic3} holds for the
sequence $\lbrace C_\ell, \ell\in \widetilde{\mathbb{N}}\rbrace$.} Note that
$G_\ell(0)=C_\ell(0)$ for every $\ell\in \widetilde{\mathbb{N}}$.

%\begin{remark}[Abelian/Tauberian type results]
Let $\ell\in \widetilde{\mathbb{N}}$. As discussed in \cite{MRV:20}, the
coefficient $\beta_{\ell }$ in Assumption \ref{basic3} governs the \emph{%
memory} of our processes; indeed, for $\beta_{\ell }=1$ (resp.~$%
\beta_\ell\in (0,1)$) the covariance function $C_{\ell }$ is integrable on $%
\mathbb{R}$ (resp.~$\int_{\mathbb{R}} |C_\ell(\tau)|\,d\tau = +\infty$) and
the corresponding process has so-called short (resp.~long) memory behavior
(note that $C_\ell(0)$ is always non-negative but $C_\ell(\tau)$ need not
be, for $\tau>0 $).

%QUESTO L'ABBIAMO GIA' DETTO NELL'ALTRO ARTICOLO Under some regularity assumptions, an equivalent characterization could
%be given in terms of the behavior at the origin of the spectral density $%
%f_\ell$ in \eqref{sp}: long-memory entailing divergence to infinity, whereas
%in the short-memory/integrable case $f_\ell$ is immediately seen to be
%bounded in $0$.

Clearly one could choose alternative parametrizations for $g_\beta(\tau)$,
such as for instance
\begin{equation*}
g_\beta(\tau) =(1 + |\tau|^2)^{-\beta/2},\quad \text{or} \quad g_\beta(\tau)
=(1 + |\tau|^{\gamma})^{-\beta};
\end{equation*}
however, these choices obviously cannot change our results, as our condition
is basically requiring that, for all $\ell$,
\begin{equation*}
\lim_{\tau\rightarrow \infty}\frac{C_\ell(\tau)}{C_\ell(0)\tau^{-\beta_\ell}}%
=1 .
\end{equation*}
A possible generalizations would be to allow for the possibility of
slowly-varying factors, i.e. to allow for autocorrelations of the form $%
L(|\tau|)\tau^{-\beta}$, where $L(\cdot )$ is such that $\lim_{\tau
\rightarrow \infty} L(|\tau|)/L(a|\tau|)=1$ for all $a>0$. These
generalizations are common in the long memory literature but would not alter
by any means the substance of our results, so we avoid to consider them for
brevity's sake.

%%%%%%%%%%%%%%%%%%%%%%%%%%%%%%%%%%%%%%%%%%%%%%%%%%%%%%%%%%%%%%

\section{Main Results}

\label{sec-MR}

In this Section we introduce the problem and describe our main results. We
start with some technical lemmas.

\subsection{The average boundary length process}

\label{subsec-object}

Let $u\in \mathbb{R}$ be a threshold fixed from now on, for $t\in \mathbb{R}$
we consider the level set
\begin{equation*}
Z(\cdot, t)^{-1}(u):=\lbrace x\in \mathbb{S}^2 : Z(x,t) = u\rbrace
\end{equation*}
which is a.s. a $\mathcal{C}^1$ manifold of dimension $1$ thanks to
Bulinskaya's Lemma \cite[Proposition 1.20]{AW:09}. Indeed, for every $x\in
Z(\cdot, t)^{-1}(u)$ the random vector $(Z(x,t), \nabla Z(x,t))$ is
non-degenerate hence, except for a negligible subset of $\Omega$ which \emph{%
may depend on $t$}, the value $u$ is regular for $Z(\cdot, t)$, i.e. $\nabla
Z(x, t)\ne 0$ for every $x$ such that $Z(x,t)=u$. We are interested in
%the \emph{boundary length process} \textcolor{red}{in realtà, al momento questo processo non è ben definito, nel senso che non abbiamo ancora dimostrato che esiste un insieme di probabilità 1 su cui la lunghezza è ben definita per ogni t, e nemmeno dopo... nel senso che dimostriamo che è ben definita la lunghezza per quasi ogni $(\omega, t)$}
\begin{equation}  \label{length}
\mathcal{L}_u(t) := \text{length}(Z(\cdot, t)^{-1}(u)).
\end{equation}
By (time) stationarity of $Z$, the law of $\mathcal{L}_u(t)$ does not depend
on $t$, in particular $\mathbb{E}[\mathcal{L}_u(t)]$ does not depend on $t$,
and can be computed via the Kac-Rice formula \cite[Theorem 6.8]{AW:09} or
the Gaussian Kinematic Formula \cite[Theorem 13.2.1]{adlertaylor} to be
\begin{equation}  \label{meanGKF}
\mathbb{E}[\mathcal{L}_u(t)] = \sigma_1 \cdot 2\pi e^{-u^2/2},
\end{equation}
where for any $j=1,2$
\begin{equation}  \label{sigma1}
\sigma _{1}^{2}:=\mathbb{E}\left[ \partial _{j;x}Z(x,t)\,\partial
_{j;x}Z(x,t)\right] =\sum_{\ell =0}^{\infty}\frac{2\ell +1}{4\pi }C_{\ell
}(0)\frac{\ell (\ell +1)}{2},
\end{equation}
cf. (\ref{eqConv}). See Lemma \ref{lemcov} in Appendix \ref{app-cov} for
details on the covariance structure of the field $(Z, \nabla Z)$.

\begin{lemma}
\label{lem_basic} There exists $\tilde \Omega \subseteq \Omega$ such that $%
\mathbb{P}(\tilde \Omega)=1$ and for every $\omega\in \tilde \Omega$ there
exists $I(\omega)\subseteq [0,+\infty)$ whose complement is negligible such
that the value $u$ is regular for $Z(\cdot, t)(\omega)$ for every $t\in
I(\omega)$.
\end{lemma}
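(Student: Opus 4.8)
The plan is to reduce the statement to a Fubini--Tonelli argument, the point being to upgrade the $t$-by-$t$ regularity furnished by Bulinskaya's Lemma (as recalled just before the statement) to a joint almost-sure statement. Introduce the ``bad'' set
\begin{equation*}
B := \bigl\{ (\omega, t) \in \Omega \times [0,+\infty) : u \text{ is not a regular value of } Z(\cdot, t)(\omega) \bigr\},
\end{equation*}
that is, the set of pairs $(\omega, t)$ for which there exists $x \in \mathbb{S}^2$ with $Z(x,t)(\omega) = u$ and $\nabla Z(x,t)(\omega) = 0$. The discussion preceding the statement shows that each section $B_t := \{ \omega : (\omega, t) \in B\}$ is $\mathbb{P}$-negligible. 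If $B$ is jointly measurable with respect to $\mathfrak{F} \otimes \mathfrak{B}([0,+\infty))$, then Tonelli's theorem gives
\begin{equation*}
(\mathbb{P} \otimes \mathrm{Leb})(B) = \int_{[0,+\infty)} \mathbb{P}(B_t)\, dt = 0,
\end{equation*}
and a second application of Fubini's theorem yields a set $\tilde\Omega$ with $\mathbb{P}(\tilde\Omega) = 1$ such that for every $\omega \in \tilde\Omega$ the section $B^\omega := \{ t : (\omega, t) \in B\}$ is Lebesgue-negligible. Setting $I(\omega) := [0,+\infty) \setminus B^\omega$ then gives exactly the conclusion.

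The crux is therefore the joint measurability of $B$, which I would establish as follows. Consider the function
\begin{equation*}
\Phi(\omega, t) := \inf_{x \in \mathbb{S}^2} \max \bigl( |Z(x,t)(\omega) - u|,\ |\nabla Z(x,t)(\omega)| \bigr).
\end{equation*}
Since $\mathbb{S}^2$ is compact and, under Assumption \ref{regular}, the map $x \mapsto \max(|Z(x,t)(\omega) - u|, |\nabla Z(x,t)(\omega)|)$ is continuous with nonnegative values, one has $(\omega, t) \in B$ if and only if $\Phi(\omega, t) = 0$, so that $B = \Phi^{-1}(\{0\})$. Fixing a countable dense subset $D \subset \mathbb{S}^2$ and using continuity in $x$, the infimum can be restricted to $D$, whence
\begin{equation*}
\Phi(\omega, t) = \inf_{x \in D} \max \bigl( |Z(x,t)(\omega) - u|,\ |\nabla Z(x,t)(\omega)| \bigr).
\end{equation*}
For each fixed $x$ the maps $(\omega, t) \mapsto Z(x,t)(\omega)$ and $(\omega, t) \mapsto \partial_{j;x} Z(x,t)(\omega)$ ($j=1,2$) are $\mathfrak{F} \otimes \mathfrak{B}([0,+\infty))$-measurable: for $Z$ this is the joint measurability built into \eqref{eq1}, and for the spatial gradient it follows from the $L^2$-convergent expansion \eqref{eqSeries_gradient}, each partial sum being measurable. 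A countable infimum of measurable functions is measurable, so $\Phi$, and hence $B$, is measurable.

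The only genuinely delicate point is this measurability step; the remainder is the routine Fubini interchange. In carrying it out I would take care that the passage from the minimum over the compact sphere to the infimum over the dense countable set $D$ is legitimate, which relies precisely on the spatial continuity of $(Z, \nabla Z)$ guaranteed by Assumption \ref{regular}; and that the joint measurability of $\nabla Z$ is genuinely available, which is why I invoke \eqref{eqSeries_gradient} rather than differentiating $Z$ pathwise. With these two points secured, the argument closes.
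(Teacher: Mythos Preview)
Your proof is correct and follows essentially the same approach as the paper: the paper's Lemma~\ref{lemmeas} establishes measurability of the good set $A^T$ via a countable dense subset of $\mathbb{S}^2$ and continuity of $(Z,\nabla Z)$, exactly as your $\Phi$ argument does (your $B$ is the complement of $\bigcup_T A^T$), and Lemma~\ref{lemreg} then runs the same Fubini interchange, first on each $[0,T]$ and then intersecting over $T\in\mathbb{N}$. The only cosmetic difference is that you package the measurability via the scalar function $\Phi$ and work directly on $[0,+\infty)$, whereas the paper writes out the sets $A^T_{n,k}$ explicitly; your justification of joint measurability of $\nabla Z$ via \eqref{eqSeries_gradient} is slightly roundabout (a difference-quotient argument using Assumption~\ref{regular} would be more direct), but this is a minor point and not a gap.
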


In view of Lemma \ref{lem_basic}, whose proof is postponed to the Appendix %
\ref{app-square}, we can define on $(\Omega, \mathfrak{F}, \mathbb{P})$ the
family $\lbrace \mathcal{C}_T(u), T>0\rbrace$ of random variables indexed by
$T>0$, where
\begin{equation}  \label{rap_int}
\mathcal{C}_{T}(u):=\,\int_{0}^{T}\,\Big(\mathcal{L}_{u}(t)-\mathbb{E}[%
\mathcal{L}_{u}(t)]\Big)\,dt,
\end{equation}
to be the \emph{average $u$-boundary length process} associated to $Z$. In
this paper we are interested in the behaviour of $\mathcal{C}_T(u)$, as $%
T\to +\infty$. To this purpose, in order to take advantage of Wiener-It\^o
theory, we first need to check that $\mathcal{C}_T(u)$ has finite variance.

\begin{lemma}
\label{squareL} For any $T>0$, the random variable $\mathcal{C}_T(u)$ is
square integrable, i.e., $\mathrm{Var}(\mathcal{C}_T(u)) < +\infty$.
\end{lemma}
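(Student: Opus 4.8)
The plan is to bound $\mathrm{Var}(\mathcal{C}_T(u))$ by controlling the second moment of the length functional uniformly over pairs of time points. Writing $\mathcal{C}_T(u) = \int_0^T (\mathcal{L}_u(t) - \mathbb{E}[\mathcal{L}_u(t)])\,dt$, Fubini (justified by nonnegativity once we establish integrability) gives
\begin{equation*}
\mathrm{Var}(\mathcal{C}_T(u)) = \int_0^T \int_0^T \mathrm{Cov}\big(\mathcal{L}_u(t), \mathcal{L}_u(s)\big)\,dt\,ds,
\end{equation*}
so it suffices to show that $\mathrm{Cov}(\mathcal{L}_u(t),\mathcal{L}_u(s))$ is finite for each pair $(t,s)$ and, more importantly, that the double integral converges. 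The first step is therefore to obtain a Kac--Rice representation for the second moment $\mathbb{E}[\mathcal{L}_u(t)\mathcal{L}_u(s)]$ as an integral over $\mathbb{S}^2\times\mathbb{S}^2$ of a conditional Gaussian expectation against the joint density of the field values at two space-time points. Concretely, I would invoke the Kac--Rice formula for the second factorial moment (as in \cite[Theorem 6.9]{AW:09}), using Lemma \ref{lem_basic} to guarantee that $u$ is a.s.\ regular for $Z(\cdot,t)$ for a.e.\ $t$, which is exactly what makes the length a well-defined integral-geometric quantity amenable to the Kac--Rice machinery.

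The key point is to verify that the integrand in the two-point Kac--Rice formula is integrable over $\mathbb{S}^2\times\mathbb{S}^2$. The potential singularity arises on the diagonal $x=y$ (and only there, by nondegeneracy away from the diagonal), where the joint density of $(Z(x,t),Z(y,s),\nabla Z(x,t),\nabla Z(y,s))$ may degenerate. The standard approach, following the classical treatment for level-set lengths of smooth Gaussian fields, is to show that near the diagonal the covariance matrix of the full $(6$-dimensional, after accounting for both gradients$)$ Gaussian vector behaves well enough that the Kac--Rice density is bounded by $c\cdot d(x,y)^{-1}$ or better, and the factor $\sin\theta$ in the spherical area element, together with the $\mathcal{C}^1$ (indeed the underlying $\mathcal{C}^2$ from \eqref{eqConv}) regularity, tames the singularity so that the integral over $\mathbb{S}^2\times\mathbb{S}^2$ is finite. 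Here I would lean on the covariance computations recorded in Lemma \ref{lemcov} (Appendix \ref{app-cov}) to control the joint law of the field and its spatial gradient at the two points, uniformly in $(t,s)$.

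With finiteness of $\mathbb{E}[\mathcal{L}_u(t)^2]$ (hence of each covariance) in hand, the double-time integral over $[0,T]^2$ is automatically finite for fixed $T$, since the covariance is bounded uniformly in $(t,s)\in[0,T]^2$ by the Cauchy--Schwarz bound $|\mathrm{Cov}(\mathcal{L}_u(t),\mathcal{L}_u(s))|\le \mathrm{Var}(\mathcal{L}_u(t))=\mathrm{Var}(\mathcal{L}_u(0))<+\infty$, using time stationarity. Thus $\mathrm{Var}(\mathcal{C}_T(u))\le T^2\,\mathrm{Var}(\mathcal{L}_u(0))<+\infty$, which is all that is claimed. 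I expect the main obstacle to be the diagonal integrability in the two-point Kac--Rice formula: establishing the correct order of the singularity of the conditional Gaussian expectation as $x\to y$ and checking it is integrable against the surface measure on $\mathbb{S}^2\times\mathbb{S}^2$. This is a delicate but well-understood computation relying on the nondegeneracy of the Gaussian vector $(Z,\nabla Z)$ off the diagonal and a Taylor expansion of the covariance kernel near the diagonal; the uniformity in $t,s$ follows from stationarity in time, so no new difficulty arises from the temporal variable at this stage.
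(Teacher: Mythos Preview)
Your proposal is correct and matches the paper's approach closely: the paper also reduces via Jensen and stationarity to $\mathrm{Var}(\mathcal{C}_T(u))\le T^2\,\mathrm{Var}(\mathcal{L}_u(0))$, and then controls $\mathbb{E}[\mathcal{L}_u(0)^2]$ through the two-point Kac--Rice formula, handling the diagonal singularity by showing the Gaussian density times $\sin\theta$ is $O(1)$ and the conditional gradient moments are uniformly bounded (using the explicit covariances of Lemma~\ref{lemcov}). The only structural difference is that the paper packages this computation inside the proof of Lemma~\ref{approx2} (the $L^2$-approximation $\mathcal{L}_u^\varepsilon(t)\to\mathcal{L}_u(t)$), establishing along the way the continuity of $u\mapsto\mathbb{E}[\mathcal{L}_u^2(t)]$, whereas you propose to apply the Kac--Rice bound directly; the technical content is the same.
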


In view of Lemma \ref{squareL}, whose proof is also postponed to the Appendix \ref%
{app-square}, $\mathcal{C}_T (u)$ in \eqref{rap_int} can be expanded into
so-called Wiener chaoses, by means of the Stroock-Varadhan decomposition,
see Section \ref{sec-Wiener} for details in our setting and \cite[\S 2.2]%
{noupebook} for a complete discussion. Briefly, this expansion is based on
the fact that the sequence of (normalized) Hermite polynomials $\lbrace H_q/%
\sqrt{q!}\rbrace_{q\ge 0}$
\begin{equation}  \label{Herm}
H_0\equiv 1,\qquad H_{q}(u):=(-1)^{q}\phi (u)^{-1}\frac{d^q}{du^q}\phi(u), \
q\ge 1
\end{equation}
(where $\phi$ denotes the probability density function of a standard
Gaussian random variable) is a complete orthonormal basis of the space of
square integrable functions on the real line with respect to the standard
Gaussian measure. (The first polynomials are $H_0(u)=1$, $H_1(u)=u$, $%
H_2(u)=u^2-1$, $H_3(u) = u^3 - 3u$.) We can write
\begin{equation}  \label{exp_s}
\mathcal{C}_{T}(u)=\sum_{q=0}^{\infty }\mathcal{C}_{T}(u)[q],
\end{equation}
where the series is orthogonal and converges in $L^2(\Omega)$, here $%
\mathcal{C}_{T}(u)[q]$ denotes the orthogonal projection of $\mathcal{C}%
_T(u) $ onto the so-called $q$-th Wiener chaos. In Proposition \ref{teoexpS}
we will determine analytic formulas for these chaotic components. We will
exploit the series representation (\ref{exp_s}) to investigate the
asymptotic distribution of $\mathcal{C}_T(u)$ as $T\to +\infty$. Roughly
speaking, in the long memory regime the behavior of $\mathcal{C}_T(u)$ will
be determined by a single term of the series, while in the case of short
range dependence all chaotic components will contribute in the limit thus
influencing both the asymptotic variance and the nature of second order
fluctuations of our boundary length functional.

\begin{remark}
This Hermite-type expansion can be directly given for $\mathcal{L}%
_u(t)$ in (\ref{length}), thus being also instrumental for the relations
between different geometric processes (evolving over time) associated to the
random field $Z$, such as the area of excursion sets and their
Euler-Poincar\'e characteristic, which we plan to investigate in a future
paper.
\end{remark}

\subsection{Statement of main results}

Before stating our main results we need some more notation.

\begin{condition}
\label{condbeta} Let $\lbrace \beta_\ell, \ell \in \widetilde{\mathbb{N}}
\rbrace$ be the sequence defined in Assumption \ref{basic3}.

\begin{itemize}
\item The sequence $\lbrace \beta_\ell, \ell \in \widetilde{\mathbb{N}},
\ell\ge 1\rbrace$ admits minimum. Let us set
\begin{equation}  \label{def_star}
\beta_{\ell ^{\star }}:=\min\lbrace \beta _{\ell }, \ell \in \widetilde{%
\mathbb{N}},\ell\ge 1\rbrace,\qquad \mathcal{I}^{\star }:=\{\ell\in
\widetilde{\mathbb{N}} :\beta _{\ell }=\beta _{\ell ^{\star }}\}.
\end{equation}

\item If $\mathcal{I}^\star\ne \widetilde{\mathbb{N}}$, then the sequence $%
\lbrace \beta_\ell, \ell\in \widetilde{\mathbb{N}} \setminus \mathcal{I}%
^\star,\ell\ge 1\rbrace$ admits minimum. Let us set
\begin{equation}  \label{def_starstar}
\beta_{\ell ^{\star \star }}:=\min \left\{ \beta _{\ell },\ \ell \in \mathbb{%
N}\backslash \mathcal{I}^{\star },\ell\ge 1\right\}.
\end{equation}
\end{itemize}
\end{condition}

Note that $\beta_{\ell^\star}, \beta_{\ell^{\star\star}}\in (0,1]$ and for $%
\ell\in \mathcal{I}^\star$, obviously $C_{\ell}(0)> 0$. In words, $\beta
_{\ell ^{\star }}$ represents the smallest exponent corresponding to the
largest memory, $\mathcal{I}^{\star }$ the set of multipoles where this
minimum is achieved, and $\beta _{\ell ^{\star \star }}$ the second smallest
exponent $\beta _{\ell }$ governing the time decay of the autocovariance $%
C_\ell$ at some given multipole $\ell$. Note that we are \textit{excluding}
the multipole $\ell =0$ by the definition of $\beta_{\ell^\star}$ and $%
\beta_{\ell^{\star\star}}$ in \eqref{def_star} and \eqref{def_starstar}, on
the other hand $\ell=0$ may belong to $\mathcal{I}^\star$. \emph{From now on
we work under Assumption \ref{condbeta}.}

%%%%%%%%%%%%%%%%%%%%%%%%%%%%%%%%%%%%%%%%%%%%%%%%%%%%%%%%%%%%%%

\subsubsection{Long range dependence}

As briefly anticipated above, for long memory random fields a single chaotic
component determines the asymptotic behavior of $\mathcal{C}_T(u)$. In our
setting, the role of dominating term is played by $\mathcal{C}_T(u)[2]$ ((%
\ref{exp_s}) with $q=2$): we will see in Remark \ref{2chaos-hermite} that
\begin{equation}  \label{caos2}
\mathcal{C}_T(u)[2]=\frac{\sigma _{1}}{2}\sqrt{\frac{\pi }{2}}\phi
(u)\sum_{\ell } \frac{C_{\ell }(0)(2\ell +1)}{4\pi } \left\{ (u^{2}-1)+\frac{%
\lambda _{\ell }/2}{\sigma _{1}^{2}}\right\} \int_0^T\int_{\mathbb{S}%
^{2}}H_{2}(\widehat Z_{\ell }(x,t))dxdt,
\end{equation}
{where $\lambda_\ell$ still denotes the $\ell$-th eigenvalue of the
spherical Laplacian, $\sigma_1$ is defined as in (\ref{sigma1}), $%
H_2(t)=t^2-1$ denotes the second Hermite polynomial, and $\widehat{Z}_\ell$
is defined as
\begin{equation}  \label{zhat}
\widehat Z_\ell (x,t) := \frac{Z_\ell(x,t)}{\sqrt{\frac{2\ell+1}{4\pi}%
C_\ell(0)}}, \quad (x,t)\in \mathbb{S}^2\times \mathbb{R}
\end{equation}
recalling the content of Section \ref{sec-rsh}. In particular, the $\widehat
Z_\ell$'s are unit variance time-dependent random spherical harmonics. The
asymptotic law of $\mathcal{C}_T(u)[2]$ was introduced in \cite{MRV:20} and
it is related to the Rosenblatt distribution, see below. }

\begin{definition}
The random variable $X_\beta$ has the standard Rosenblatt distribution (see
e.g. \cite{Ta:75} and also ~\cite{DM79,Ta:79}) with parameter $\beta \in (0,%
\frac{1}{2})$ if it can be written as
\begin{equation}  \label{Xbeta}
X_{\beta}=a(\beta) \int_{(\mathbb{R}^{2})^{^{\prime }}}\frac{e^{i(\lambda
_{1}+\lambda _{2})}-1}{i(\lambda _{1}+\lambda _{2})}\frac{W(d\lambda
_{1})W(d\lambda _{2})}{|\lambda _{1}\lambda _{2}|^{(1-\beta )/2}}\text{ ,}
\end{equation}%
where $W$ is the white noise Gaussian measure on $\mathbb{R}$, the
stochastic integral is defined in the Ito's sense (excluding the diagonals:
as usual, $(\mathbb{R}^2)^{\prime }$ stands for the set $\lbrace (\lambda_1,
\lambda_2)\in \mathbb{R}^2:\lambda_1 \ne \lambda_2\rbrace$), and
\begin{equation}  \label{abeta}
a(\beta) := \frac{\sigma(\beta)}{2\,\Gamma(\beta)\,\sin\left({(1-\beta)\pi}/{%
2}\right)}\,,
\end{equation}
with
\begin{equation*}
\sigma(\beta) := \sqrt{\frac12(1-2\beta)(1-\beta)}\,.
\end{equation*}
Following \cite{MRV:20}, we say the random vector $V$ satisfies a composite
Rosenblatt distribution of degree $N\in \mathbb{N}$ with parameters $%
c_{1},...,c_{N}\in \mathbb{R},$ if%
\begin{equation}  \label{V}
V=V_{N}(c_{1},...,c_{N};\beta )\mathop{=}^d\sum_{k=1}^{N}c_{k}X_{k;\beta}%
\text{ ,}
\end{equation}%
where $\left\{ X_{k;\beta}\right\} _{k=1,...,N}$ is a collection of i.i.d.
standard Rosenblatt random variables of parameter $\beta $.
\end{definition}

Note that indeed $\mathbb{E}[X_\beta]=0$ and $\mathop{\rm Var}(X_\beta)=1$.
The Rosenblatt distribution was first introduced in \cite{Ta:75} and has
already appeared in the context of spherical isotropic Gaussian random
fields as the exact distribution of the correlogramm, see \cite{LTT18}.

%il remark qui sotto lo avevamo gia' messo nell'articolo precedente quindi lo tolgo

Further characterizations of the composite Rosenblatt distribution, for
instance in terms of its characteristic function, can be found in \cite%
{MRV:20}.

%\begin{remark}
%\textrm{The characteristic function $\Xi_V$ of $V=V_{N}(c_{1},...,c_{N};%
%\beta )$ in \eqref{V} is given by (see e.g.~\cite{VT13})
%\begin{equation*}
%\Xi_V(\theta) = \prod_{k=1}^N \xi_\beta(c_k \theta),\qquad
%\xi_\beta(\theta)= \exp\left ( \frac12 \sum_{j=2}^{+\infty} \left ( 2i
%\theta \sigma(\beta) \right )^j \frac{a_j}{j} \right ),
%\end{equation*}
%where $\xi_\beta$ is the characteristic function of $X_\beta$ in %
%\eqref{Xbeta}, the series is only convergent near the origin and
%\begin{equation*}
%a_j := \int_{[0,1]^j} |x_1 - x_2|^{-\beta} |x_2 - x_3|^{-\beta}\cdots
%|x_{j-1} - x_j|^{-\beta} |x_j - x_1|^{-\beta}dx_1 dx_2 \cdots dx_j .
%\end{equation*}
%Note that when $\beta\to 0^+$ then $\xi_\beta$ approaches the characteristic
%function of $(Z^2 -1)/\sqrt{2}$, where $Z\sim \mathcal{N}(0,1)$ is a
%standard Gaussian random variable. As $\beta \to \frac12^-$ the limit is the
%characteristic function of $Z$. Many more characterizations of the
%Rosenblatt distribution have been given in the literature: for instance its
%infinite divisibility properties and Levy-Khinchin representation are
%discussed in \cite{LRT17} and the references therein. }
%\end{remark}

We are now ready to state our first main result. Let us define the
standardized average boundary length functional as
\begin{equation}
\widetilde{\mathcal{C}}_{T}(u):=\frac{\mathcal{C}_{T}(u)}{\sqrt{\mathop{\rm
Var}\left ( \mathcal{C}_{T}(u)\right ) }}.
\end{equation}

\begin{theorem}
\label{main-theorem} If $2\beta_{\ell^\star}<\min(\beta_0,1)$, then as $T\to
+\infty$,
\begin{equation}  \label{caos2dom}
\widetilde{\mathcal{C}}_{T}(u) =\frac{\mathcal{C}_{T}(u)[2]}{\sqrt{%
\mathop{\rm Var}\left ( \mathcal{C}_{T}(u)\right ) }}+o_{\mathbb{P}}(1)\,,
\end{equation}
where $o_{\mathbb{P}}(1)$ denotes a sequence converging to zero in
probability, $\mathop{\rm Var}\left ( \mathcal{C}_{T}(u)\right ) \sim %
\mathop{\rm Var}\left ( \mathcal{C}_{T}(u)[2]\right )$ and
\begin{equation}  \label{asymp_var}
\lim_{T\rightarrow \infty }\frac{\mathop{\rm Var}\left( \mathcal{C}%
_{T}(u)[2]\right) }{T^{2-2\beta _{\ell^{\star}}}} =\frac{\sigma _{1}^{2}\pi%
} {4}\phi ^{2}(u)\sum_{\ell \in \mathcal{I}^\star}\frac{(2\ell+1)^{2}C_%
\ell(0)^2}{(1-2\beta_\ell)(1-\beta_\ell)}\left\{ (u^{2}-1)+\frac{\lambda
_{\ell}/2}{\sigma_{1}^{2}}\right\}^2\,.
\end{equation}
Assume in addition that $\# \mathcal{I}^\star$ in (\ref{def_star}) is
finite, then as $T\to \infty $, we have that
\begin{equation*}
\widetilde{\mathcal{C}}_{T}(u)\overset{d}{\longrightarrow}\sum_{\ell \in
\mathcal{I}^\star} \frac{C_\ell(0)}{\sqrt{v^\star}}\left\{ (u^{2}-1)+\frac{%
\lambda _{\ell}/2}{\sigma_{1}^{2}}\right\}V_{2\ell+1}(1,\dots,1;\beta_{\ell^%
\star})\,,
\end{equation*}
where
\begin{equation*}
v^\star=a(\beta_{\ell^\star})^2 \sum_{\ell \in \mathcal{I}^\star} \frac{%
2(2\ell+1)^{2} C_\ell(0)^2}{(1-2\beta_\ell)(1-\beta_\ell)} \left\{ (u^{2}-1)+%
\frac{\lambda _{\ell}/2}{\sigma_{1}^{2}}\right\}^2\,,
\end{equation*}
$a(\beta_{\ell^\star})$ being as in \eqref{abeta}.
\end{theorem}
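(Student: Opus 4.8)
The plan is to exploit the Wiener--It\^o chaos expansion \eqref{exp_s} and to prove that, under the hypothesis $2\beta_{\ell^\star}<\min(\beta_0,1)$, the single projection $\mathcal{C}_T(u)[2]$ asymptotically dominates the entire series. I would organize the argument in three stages: an exact evaluation of $\mathrm{Var}(\mathcal{C}_T(u)[2])$ with its leading-order asymptotics; a proof that every other chaotic component is negligible on the scale $T^{2-2\beta_{\ell^\star}}$; and the identification of the limiting law via the composite Rosenblatt distribution \eqref{V}.

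For the first stage I would start from the closed form \eqref{caos2}. Since the random eigenfunctions $\{Z_\ell\}$ are mutually independent, $\mathrm{Var}(\mathcal{C}_T(u)[2])$ factorizes as a sum over $\ell$ of the variances of $Q_\ell(T):=\int_0^T\int_{\mathbb{S}^2}H_2(\widehat Z_\ell(x,t))\,dx\,dt$, where $\widehat Z_\ell$ is as in \eqref{zhat}. Using $\mathbb{E}[H_2(\xi)H_2(\eta)]=2(\mathbb{E}[\xi\eta])^2$ for standard jointly Gaussian $\xi,\eta$, together with the factorization of the covariance of $\widehat Z_\ell$ as $r_\ell(t-s)\,P_\ell(\langle x,y\rangle)$ with $r_\ell:=C_\ell/C_\ell(0)$, the spatial and temporal integrals decouple. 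The spatial double integral $\int_{\mathbb{S}^2}\int_{\mathbb{S}^2}P_\ell(\langle x,y\rangle)^2\,dx\,dy$ is evaluated via the addition theorem and the orthogonality relation $\int_{-1}^1P_\ell(t)^2\,dt=2/(2\ell+1)$, whereas the temporal one is controlled through Assumption \ref{basic3}: as $r_\ell(\tau)\sim\tau^{-\beta_\ell}$, for $2\beta_\ell<1$ one gets $\int_0^T\int_0^T r_\ell(t-s)^2\,dt\,ds\sim T^{2-2\beta_\ell}/[(1-2\beta_\ell)(1-\beta_\ell)]$. Only the multipoles $\ell\in\mathcal{I}^\star$ achieve the slowest decay and hence contribute at order $T^{2-2\beta_{\ell^\star}}$, all other terms being of smaller order; collecting constants yields \eqref{asymp_var}.

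The second stage is the crux of the proof. I would use the analytic expressions for the chaotic projections provided in Proposition \ref{teoexpS} and estimate their variances chaos by chaos. The zeroth chaos vanishes by centering. A clean simplification occurs at the first chaos: at each point $Z(x,t)$ and $\nabla Z(x,t)$ are independent and $|\nabla Z|$ carries no degree-one Hermite component, so the only surviving contribution is the degree-one part of $\delta(Z-u)$ multiplied by $\mathbb{E}|\nabla Z|$; integration over $\mathbb{S}^2$ then annihilates every multipole except $\ell=0$, and $\mathcal{C}_T(u)[1]$ turns out to be proportional to $u\int_0^T a_{0,0}(t)\,dt$, whose variance is of order $T^{2-\beta_0}$ (or $T$ when $\beta_0=1$). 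This is exactly why the assumption $2\beta_{\ell^\star}<\min(\beta_0,1)$ renders the first chaos subdominant, and it clarifies the special role of $\ell=0$. For every $q\ge 3$ the variance is governed by space-time integrals whose total order in the autocorrelations is $q$, hence is bounded by $T^{2-q\beta_{\ell^\star}}$ (or $O(T)$ in the short-memory range), which is $o(T^{2-2\beta_{\ell^\star}})$ precisely because $2\beta_{\ell^\star}<1$. The delicate points here are the uniform control of the mixed-mode contributions and the summation over $\ell$, for which I would invoke the summability \eqref{eqConv} and the normalization $\sigma_0^2=1$ in \eqref{somma1}.

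With the first two stages in hand, \eqref{caos2dom} follows at once: orthogonality of the chaoses gives that $\widetilde{\mathcal{C}}_T(u)-\mathcal{C}_T(u)[2]/\sqrt{\mathrm{Var}(\mathcal{C}_T(u))}$ has variance $\sum_{q\neq 2}\mathrm{Var}(\mathcal{C}_T(u)[q])/\mathrm{Var}(\mathcal{C}_T(u))\to 0$, so it is $o_{\mathbb{P}}(1)$, and $\mathrm{Var}(\mathcal{C}_T(u))\sim\mathrm{Var}(\mathcal{C}_T(u)[2])$. For the limit law, when $\#\mathcal{I}^\star<\infty$ the dominant chaos reduces to the finite sum over $\ell\in\mathcal{I}^\star$ of the corresponding terms of \eqref{caos2}; writing $Q_\ell(T)=\frac{4\pi}{2\ell+1}\sum_{m=-\ell}^{\ell}\int_0^T H_2\big(a_{\ell,m}(t)/\sqrt{C_\ell(0)}\big)\,dt$, each scalar long-memory quadratic functional, normalized by $T^{1-\beta_{\ell^\star}}$, converges to a standard Rosenblatt random variable by the classical non-central limit theorem; the $2\ell+1$ i.i.d.\ components then combine into the composite Rosenblatt $V_{2\ell+1}(1,\dots,1;\beta_{\ell^\star})$, and the independence of distinct $Z_\ell$ makes the joint limit a sum of independent composite Rosenblatts. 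Normalizing by $\sqrt{v^\star}$ delivers exactly the asserted distributional convergence. This last stage is essentially the content of \cite{MRV:20}, which I would invoke for the single-mode convergence and its vector extension.
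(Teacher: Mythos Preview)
Your three-stage plan coincides with the paper's argument: the variance of $\mathcal{C}_T(u)[2]$ is obtained exactly as you describe (Proposition \ref{prop:var-2nd-chaos}), the first chaos is handled via its reduction to the single process $a_{0,0}$ (Lemma \ref{lemvar1}), and the distributional limit is identified by applying the Dobrushin--Major/Taqqu theorem (Theorem \ref{DMtheorem}) to each normalized coefficient $\hat a_{\ell,m}$, then using independence across $m$ and $\ell$ to assemble the composite Rosenblatt, precisely as you outline.

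There is, however, one genuine gap in your second stage. You bound $\mathrm{Var}(\mathcal{C}_T(u)[q])$ by $O(T^{2-q\beta_{\ell^\star}})$ for each fixed $q\ge 3$ and then assert that the \emph{tail sum} $\sum_{q\ge 3}\mathrm{Var}(\mathcal{C}_T(u)[q])$ is $o(T^{2-2\beta_{\ell^\star}})$. But the implicit constant in your bound grows with $q$: the Wick expansion of the $q$th-chaos covariance produces up to $q!$ terms, and the outer Hermite sum in \eqref{e:ppS} runs over $O(q^2)$ index pairs, so term-by-term negligibility does not yield tail negligibility. You flag the summation over $\ell$ as the delicate point, and \eqref{eqConv} does control that, but the real obstruction is the summation over $q$. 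The paper's Proposition \ref{var-higher-chaoses} handles this by factoring out $T^{2-\frac{5}{2}\beta_{\ell^\star}}$ from the entire tail, so that the residual series is dominated by $\sum_{q\ge 3} q!\,K_1(q)\,(3\varepsilon)^q$ for arbitrary $\varepsilon>0$ once $T$ is large enough; here $K_1(q)=\sum_{i_1+i_2+i_3=q}\frac{|\alpha_{i_1,i_2}\beta_{i_3}(u)|^2}{i_1!i_2!i_3!}$ is uniformly bounded (it is the $q$th-chaos variance of a fixed square-integrable functional), and the multinomial identity $\sum_{i_1+i_2+i_3=q}\binom{q}{i_1,i_2,i_3}=3^q$ absorbs the combinatorial growth coming from Cauchy--Schwarz. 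Without this step your argument does not close.
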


To investigate more deeply the structure of the dominating limit variables,
we can distinguish between two cases:

\begin{enumerate}
\item \textbf{Non-unique minimum.} This is the situation where
%$\mathcal{I\neq
%\varnothing }$ \textcolor{red}{definire $\mathcal I$ prima dell'Assumption \ref{basic3}, non trovo la definizione}, i.e.,
at least one multipole has non integrable (over time) autocovariance
function, but the cardinality of $\mathcal{I}^{\star }$ is strictly larger
than one, $\#\mathcal{I}^{\star }>1$, meaning that the minimum of $\lbrace
\beta _{\ell }, \ell\in \widetilde{\mathbb{N}} \rbrace$ is non-unique. In
this case the dominating second order chaos has a neat expression for the
variance but the Berry cancellation phenomenon cannot occur, i.e., the
variance has the same order of magnitude at any level $u\in\mathbb{R}$, see (%
\ref{asymp_var}). %In particular, the variance satisfies, as $T\to+\infty$,
%$$
%\operatorname{Var}\left( \mathcal{C}_{T}(u)\right)\sim{T^{2-2\beta_{\ell^\star}}}\frac{\sigma_{1}^{4}\pi \phi ^{2}(u)}{4}\sum_{\ell \in \mathcal{I}^\star}  \frac{(2\ell+1)^{3} C_\ell(0)^2}{(1-2\beta_\ell)(1-\beta_\ell)} \left\{ (u^{2}-1)+\frac{\lambda _{\ell}}{\sigma_{1}^{2}}\right\}^2 .
%$$

\item \textbf{Unique minimum}. This is the situation where at least one
multipole has non integrable (over time) autocovariance function and $\#%
\mathcal{I}^{\star }=1,$ meaning that there is a single multipole (labelled $%
\ell ^{\star })$ where $\lbrace \beta _{\ell }, \ell\in \widetilde{\mathbb{N}%
} \rbrace$ achieves its minimum, and hence where the temporal dependence is
maximal. In these circumstances, we have not only that the boundary length
is dominated by the second chaos (\ref{caos2dom}), but also that this chaos
admits an asymptotic expression in terms of the (random) $L^{2}(\mathbb{S}%
^2) $-norm of the field $Z_{\ell^{\star}}$: from (\ref{caos2})
\begin{equation}  \label{2nd-mono}
\mathcal{C}_T(u)[2]=\frac{\sigma _{1}}{2}\sqrt{\frac{\pi }{2}}\phi (u)(2\ell
^{\star }+1)\left\{ (u^{2}-1)+\frac{\lambda _{\ell ^{\star }}/2}{\sigma
_{1}^{2}}\right\} \frac{C_{\ell^\star}(0)}{4\pi } \int_{\mathbb{S}%
^{2}}H_{2}(\widehat Z_{\ell^\star}(x,t))dxdt.
\end{equation}
Moreover, from (\ref{2nd-mono}) \emph{perfect correlation occurs between the
average boundary length and the average excursion area investigated in \cite%
{MRV:20}}. In this case the variance is asymptotic to
\begin{equation*}
\mathop{\rm Var}\left( \mathcal{C}_{T}(u)\right)\sim{T^{2-2\beta_{\ell^%
\star}}}\frac{\sigma_{1}^{2}\pi \phi ^{2}(u)}{4} \frac{(2\ell^\star+1)^{2}
C_{\ell^\star}(0)^2}{(1-2\beta_{\ell^\star})(1-\beta_{\ell^\star})} \left\{
(u^{2}-1)+\frac{\lambda_{\ell^\star}/2}{\sigma_{1}^{2}}\right\}^2
\end{equation*}
and a form of Berry's cancellation phenomenon holds, meaning that the
variance of boundary lengths at some levels (not zero) has a smaller order
of magnitude than the variance at any other level. Indeed, Berry's
cancellation occurs in points $u$ such that
\begin{equation*}
(u^{2}-1)+\frac{\lambda _{\ell }/2}{\sigma _{1}^{2}}=0,\quad\text{ i.e. }%
u^{2}=1-\frac{\lambda _{\ell ^{\star }}/2}{\sigma _{1}^{2}}\text{ .}
\end{equation*}%
This clearly implies that Berry's cancellation can occur in a point $\pm
u^{\star }\in \lbrack 0,1)$ such that%
\begin{equation*}
u^{\star }=\pm \sqrt{1-\frac{\mathbb{\lambda }_{\ell ^{\star }}}{\mathbb{E}%
_{C}[\mathbb{\lambda }_{\ell }]}}
\end{equation*}%
where $\mathbb{E}_{C}[\mathbb{\lambda }_{\ell }]$ is the expected value
under the probability measure assigning weights $\frac{2\ell +1}{4\pi }%
C_{\ell }(0)$ (recall that $\sigma_0^2=\sum \frac{2\ell +1}{4\pi }C_{\ell
}(0)=1$). In other words, for Berry's cancellation to occur we need long
memory to occur in multipoles which are ``lower than average" in terms of
the angular power spectrum, or we need the field at any given time to have
huge power on low multipoles. Indeed in the standard monochromatic wave case
we have $\frac{\mathbb{\lambda }_{\ell ^{\star }}}{\mathbb{E}_{C}[\mathbb{%
\lambda }_{\ell }]}=1$ and we are back to the nodal length case. At $u^\star$
we have two possible scenarios: if $2\beta_{\ell^{\star\star}}>3\beta_{%
\ell^{\star}}$ then the boundary length is dominated by the third chaos,
while if $2\beta_{\ell^{\star\star}}<3\beta_{\ell^{\star}}$ then the
boundary length is still dominated by its second chaotic component.
\end{enumerate}

\begin{remark}
\textrm{More generally, a crucial role seems to be played by the dispersion
of the ``random" quantity $\frac{\lambda _{\ell }/2}{\sigma _{1}^{2}}$ in
terms of the ``weights"
\begin{equation*}
W_{\ell }(T):=\mathrm{Var}\left\{ (2\ell +1)\int_{0}^{T}\left[ \widehat{C}%
_{\ell }(t)-\mathbb{E}\widehat{C}_{\ell }(t)\right] dt\right\} \text{ ,}
\end{equation*}%
where $\widehat{C}_\ell$ is the sample spectrum, defined as
\begin{equation}  \label{sampletris}
\widehat{C}_{\ell }(t):=\frac{1}{2\ell +1}\sum_{m=-\ell }^{\ell }\left\vert
a_{\ell m}(t)\right\vert ^{2}=\frac{1}{2\ell +1}\int_{\mathbb{S}^{2}}Z_{\ell
}(x,t)^{2}dx\text{ .}
\end{equation}
In particular, the correlation with the excursion area is going to be larger
and larger as the ``variance" of $\frac{\lambda _{\ell }/2}{\sigma _{1}^{2}}$
is going to be smaller and smaller. }
\end{remark}

\subsubsection{Short Range Dependence}

If the set of long range dependent multipoles $\mathcal{I}$ is empty,
meaning that $\beta_0=1$ and $2\beta _{\ell }>1$ for all $\ell\ge1$, then
the second-order chaotic component would no longer be dominating, and
investigation of all terms of the series (\ref{chaosexpS}) below is
required. In this case a Gaussian limit via classic Breuer-Major arguments
\cite{BM} holds.

We first need to introduce some more notation: for $q\ge 1$, let
\begin{equation*}
s^2_q:=\lim_{T \rightarrow \infty}\frac{Var[\mathcal{C}_T(u)[q]]}{T};
\end{equation*}

%La parte dei Gaunt la taglierei!

%let $\ell_1,
%\dots, \ell_q \ge 0$ and $m_i \in \lbrace -\ell_i, \dots, \ell_i\rbrace$ for
%$i=1, \dots, q$. The \emph{generalized Gaunt integral} \cite[p. 82]{MaPeCUP}
%of parameters $q, \ell_1, \dots, \ell_q, m_1, \dots, m_q$ is defined as (cf. \eqref{gaunt3})
%\begin{equation}  \label{eq:gaunt}
%\mathcal{G}_{\ell _{1}...\ell _{q}}^{m_1...m_q}:=\int_{\mathbb{S}^2}Y_{\ell
%_{1}, m _{1}}(x)\cdots Y_{\ell _{q}, m_{q}}(x) \,dx\,,
%\end{equation}
%where $\lbrace Y_{\ell,m}, \ell\ge 0, m=-\ell,\dots, \ell\rbrace$ still
%denotes the family of spherical harmonics introduced in \S \ref{secKL}.

\begin{theorem}
\label{quattro} Assume $\beta_0=1$ and $2\beta _{\ell }>1$ for all $\ell\ge1$%
. Then we have

%If either $u\ne0$ and $2\beta_{\ell^\star}>1$ or $u=0$ and $3\beta _{\ell^\star}>1$, we have
\begin{equation*}
\lim_{T\rightarrow \infty }\frac{\mathrm{Var}\left( \mathcal{C}%
_{T}(u)\right) }{T}=\sum_{q=1}^{+\infty} s_q^2\, ,
\end{equation*}
and moreover, as $T\to +\infty$,
\begin{equation*}
\widetilde{\mathcal{C}}_{T}(u)=\frac{\mathcal{C}_{T}(u)-\mathbb{E}{\mathcal{C%
}}_{T}(u)}{\sqrt{Var[\mathcal{C}}_{T}(u)]}\overset{d}{\rightarrow} Z,
\end{equation*}
$Z\sim \mathcal{N}(0,1)$ being a standard Gaussian random variable.
\end{theorem}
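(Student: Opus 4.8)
The plan is to combine the Wiener chaos expansion \eqref{exp_s} with a Breuer--Major type analysis of each chaotic projection, the multivariate Fourth Moment Theorem for the joint convergence of finitely many chaoses, and a uniform tail estimate that lets us pass from a finite truncation of \eqref{exp_s} to the full series. The starting observation is that, by time-stationarity of $Z$, the $q$-th chaotic projection can be written as $\mathcal{C}_T(u)[q]=\int_0^T \Psi_q(t)\,dt$, where $\{\Psi_q(t),\,t\in\mathbb{R}\}$ is a centered stationary process lying in the $q$-th Wiener chaos whose explicit form is supplied by Proposition \ref{teoexpS}; equivalently $\mathcal{C}_T(u)[q]=I_q(f_{q,T})$ for a symmetric kernel $f_{q,T}$, where $I_q$ denotes the $q$-th multiple Wiener--It\^o integral.

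First I would settle the variance asymptotics. Since distinct Wiener chaoses are orthogonal, $\mathrm{Var}(\mathcal{C}_T(u))=\sum_{q\ge 1}\mathrm{Var}(\mathcal{C}_T(u)[q])$, and for each $q$
\begin{equation*}
\mathrm{Var}(\mathcal{C}_T(u)[q])=\int_0^T\int_0^T r_q(t-s)\,dt\,ds,\qquad r_q(\tau):=\mathbb{E}[\Psi_q(\tau)\Psi_q(0)].
\end{equation*}
A diagram computation expresses $r_q(\tau)$ as an absolutely convergent sum, over multipole configurations, of $q$-fold products of the entries of the time-lagged covariance of the Gaussian vector $(Z,\nabla Z)$, each entry being a series in $\{C_\ell(\tau)\}$ and its first spatial derivatives. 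The key step is to show that $r_q\in L^1(\mathbb{R})$ with $\sum_{q\ge1}\|r_q\|_{L^1(\mathbb{R})}<+\infty$: under Assumption \ref{basic3} with $\beta_0=1$ (so $C_0\in L^1$) and $2\beta_\ell>1$ for $\ell\ge 1$ (so $C_\ell^2\in L^1$), together with the summability \eqref{eqConv} governing the gradient contributions and $\alpha\ge 2$, every $q$-fold product with $q\ge 1$ is integrable in $\tau$. Dominated convergence then gives
\begin{equation*}
\frac{\mathrm{Var}(\mathcal{C}_T(u)[q])}{T}\longrightarrow \int_{\mathbb{R}}r_q(\tau)\,d\tau=:s_q^2,\qquad \sum_{q\ge 1}s_q^2<+\infty.
\end{equation*}

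Next I would prove the CLT for a finite truncation. Fixing $Q\ge 1$, consider the vector $T^{-1/2}\big(\mathcal{C}_T(u)[1],\dots,\mathcal{C}_T(u)[Q]\big)$, whose $q$-th entry is $I_q(T^{-1/2}f_{q,T})$. By the multivariate Fourth Moment Theorem of Peccati--Tudor (see \cite[Ch.~6]{noupebook}), joint convergence to a centered Gaussian vector holds once: (a) the covariance matrix converges, which is already granted by the previous step, the off-diagonal entries being identically zero for every $T$ by chaos orthogonality, so that the limiting covariance is $\mathrm{diag}(s_1^2,\dots,s_Q^2)$; and (b) for each $q\ge 2$ and each $r=1,\dots,q-1$ the contraction norms satisfy $\|f_{q,T}\otimes_r f_{q,T}\|=o(T)$ (the first chaos being exactly Gaussian and needing no condition). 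Step (b) is the technical heart: the squared contraction norms reduce to higher-dimensional time integrals of products of covariances containing at least one extended cycle, and the integrability established above forces these integrals to be $o(T)$, which is exactly the classical Breuer--Major estimate \cite{BM}. We deduce $T^{-1/2}\sum_{q=1}^Q\mathcal{C}_T(u)[q]\overset{d}{\longrightarrow}\mathcal{N}\big(0,\sum_{q=1}^Q s_q^2\big)$.

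Finally, the passage to the full series rests on the uniform tail bound
\begin{equation*}
\sup_{T>0}\frac{1}{T}\,\mathrm{Var}\Big(\sum_{q>Q}\mathcal{C}_T(u)[q]\Big)=\sup_{T>0}\frac{1}{T}\sum_{q>Q}\mathrm{Var}(\mathcal{C}_T(u)[q])\le \sum_{q>Q}\|r_q\|_{L^1(\mathbb{R})}\xrightarrow[Q\to\infty]{}0,
\end{equation*}
using $\frac1T\int_0^T\!\int_0^T|r_q(t-s)|\,dt\,ds\le\|r_q\|_{L^1(\mathbb{R})}$. A standard approximation lemma for convergence in distribution then upgrades the truncated CLT to $T^{-1/2}\mathcal{C}_T(u)\overset{d}{\longrightarrow}\mathcal{N}\big(0,\sum_{q\ge1}s_q^2\big)$, and Slutsky's theorem together with $\mathrm{Var}(\mathcal{C}_T(u))\sim T\sum_{q\ge1}s_q^2$ yields $\widetilde{\mathcal{C}}_T(u)\overset{d}{\longrightarrow}\mathcal{N}(0,1)$. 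The hard part will be the simultaneous control of the two infinities, the chaos order $q$ and the multipole index $\ell$: because $\mathcal{L}_u(t)$ depends on all multipoles at once, each $r_q$ is itself an infinite multipole sum, and the assumptions $\beta_0=1$, $2\beta_\ell>1$, $\alpha\ge2$ together with \eqref{eqConv} are exactly what make both the norms $\|r_q\|_{L^1}$ finite and summable in $q$ and the contraction estimates of step (b) uniform.
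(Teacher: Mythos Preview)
Your proposal is correct and follows essentially the same route as the paper: chaos expansion, uniform-in-$T$ tail control allowing truncation to finitely many chaoses, and a Fourth-Moment/Stein--Malliavin argument for each retained component (the paper phrases this last step via fourth cumulants going to zero rather than contraction norms, but these are equivalent in the Nourdin--Peccati framework). The paper only sketches this argument and omits the details, so your write-up is in fact more explicit than what appears there.
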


Recall that for $\beta_0=1$ we have $\int_{\mathbb{R}} C_0(\tau)\,d\tau \in
(0,+\infty)$ (see Condition \ref{basic3}) so that $s_1^2>0$ yielding $%
\sum_{q\ge 1}s^2_q >0$ (the limiting variance constant is strictly
positive). The proof of the previous result can then be established by a
standard (although lengthy) analysis of terms in the chaos expansions (\ref%
{chaosexpS}). In particular, note that by the $L^2$ convergence of the
Wiener chaoses it is sufficient to focus on an (arbitrarily large but)
finite number of components (the remainder may be made negligible, uniformly
over $T$); the fourth cumulants of these components con be shown to converge
to zero after normalizing for the variance, so that the Central Limit
Theorem may follow from Stein-Malliavin arguments (see \cite{noupebook}).
Details are omitted for brevity's sake.

%Moreover from \eqref{def_star} we have that $C_\ell(0) >0$ for $%
%\ell\in \mathcal{I}^\star$, and we will see (\eqref{Gaunt+} and %
%\eqref{Gaunt2} that $\mathcal{G}_{\ell _{1}...\ell _{q}}^{0\dots 0}\ge 0$
%and $s^2_q\ge 0$.
%%%%%%%%%%%%%%%%%%%%%%%%%%%%%%%%%%%%%%%%%%%%%%%%%%%%%%%%%%%%%%

\subsection{Structure of the paper}

In Section \ref{sec-discussion} we compare our main findings with the
existing literature. Section \ref{secProofsMR} contains the proof of our
main theorem, together with the presentation of its main technical tool and
it is divided as follows. In Section 4.1 we present the $L^2$ approximation
of the length for level curves, whereas the Wiener chaotic decomposition of
our boundary length functional is given in Section 4.2; in particular we
study the second order chaotic component, i.e. we compute its variance and
hence we obtain a much neater asymptotic expression, which includes only the
multipoles corresponding to the strongest memory. A much more technical
computation is aimed to show that all the higher-order chaotic components
are asymptotically negligible; these results are then combined in Section
4.2 to prove our main theorem. The Appendix collects a number of important
auxiliary results, that derive explicitly covariance structures and cover
measurability issues, mean-square approximations and chaotic decompositions.

\subsection{Acknowledgements}

DM acknowledges the MIUR Excellence Department Project awarded to the
Department of Mathematics, University of Rome “Tor Vergata%
”CUP E83C18000100006. The research of MR has been supported by
the ANR-17-CE40-0008 Project UNIRANDOM. AV has been supported by the
co-financing of the European Union - FSE-REACT-EU, PON Research and
Innovation 2014-2020, DM 1062/2021.

\section{Discussion}

\label{sec-discussion}

In this Section we compare our main results with the existing related
literature on the geometry of random fields.

\subsection{A comparison with the high-energy regime literature}

The literature on the geometry of random fields on manifolds has become vast
over the last decade, see for instance \cite{Rossi2018}, \cite{M2022}, \cite%
{Wig22} for some recent surveys. Much of the literature has concentrated on
the high-frequency geometry for random eigenfunctions, in the case of random
fields on the sphere (or on other Riemannian manifolds, for instance the
torus) with no temporal dependence. In particular, concerning level curves
it has been shown that the following asymptotic results hold:

\begin{itemize}
\item for level sets corresponding to $u\neq 0,$ the length of level curves
is dominated by a single projection term in the chaos expansion, i.e., the
second-order component;

\item this component can be expressed in terms of the norm of the function,
without its derivatives and it disappears in the nodal case $u=0$ (the
so-called Berry's cancellation phenomenon);

\item at $u=0,$ the nodal length is again dominated by a single term in the
chaos expansion, which is the fourth-order component;

\item in both cases, it is possible to establish quantitative central limit
theorems, in the high-energy limit;

\item the nodal length and the level curves are asymptotically perfectly
uncorrelated. However, considering the partial autocorrelation, i.e.,
removing (or \emph{freezing}) the effect of the random $L^{2}$-norm, the
asymptotic correlation is again unity.
\end{itemize}

Much of these results can be extended to other geometric functionals, such
as Lipschitz-Killing curvatures (in the two-dimensional case, the excursion
area, the boundary length and the Euler-Poincar\'e characteristic) and
critical points. Indeed, full correlation has been shown to hold, in the
high-frequency limit, for all these statistics, in \emph{generic} cases
where $u\neq 0$ (or, more generally, where the second-order chaos component
does not disappear).

The setting we consider in this paper is rather different, for a number of
reasons. Firstly, we are not considering eigenfunctions, but arbitrary
(although Gaussian and isotropic) spherical random fields. More importantly,
we are going beyond those previous results by allowing a form of dependence
over time; because of this, the asymptotic framework of our work here is
based upon a different asymptotic regime, that is, fluctuations for a
growing span over time, rather than for higher and higher frequency
eigenfunctions. The results presented here show then both analogies and
important differences with the existing literature. More precisely, let us
note the following:

\begin{itemize}
\item It is still the case (in the long memory case) that the fluctuations
around the expected value are asymptotically (as $T\rightarrow \infty $)
dominated by the second order chaos; on one hand, this chaos can again be
expressed in terms of the harmonic components of the fields itself, without
the need to resort to derivatives (despite the fact that these derivatives
do appear in the Kac-Rice representation of level curves, see below).

\item On the other hand, it is no longer the case that the second-order
chaos is proportional to the random $L^{2}$-norm of the field itself; it is
instead a linear combination of $L^{2}$-norm of the harmonic components $%
Z_{\ell }$.

\item Related to the previous point, it is no longer the case that Berry's
cancellation occurs at the nodal level $u=0$, and actually for the case $\#%
\mathcal{I}^\star>1$, the Berry cancellation phenomenon cannot occur at all,
i.e., the variance has the same order of magnitude at any level $u\in\mathbb{%
R}$.

\item In the case of asymptotically monochromatic fields, i.e., those where
the minimum of the memory parameter $\beta_\ell$ is attained on a single
multipole, there exist levels where the second-order chaos disappears, and
hence the variance is asymptotically of lower order. The exact value of
these levels depends upon a combination of the variance of the single
component $Z_{\ell^\star}$, and the variance of the derivative of the entire
field $Z$. However, rather differently from the literature so far, these do
not correspond to the nodal case $u=0$.
\end{itemize}

Let us also recall that in \cite{MRV:20} the large time behavior of the
empirical excursion area of the space-time spherical random field $Z$ in (%
\ref{eq1}) has been investigated, i.e. the asymptotic distribution of
\begin{equation}  \label{area1}
\mathcal{M}_T(u) := \int_0^T \left ( \int_{\mathbb{S}^2}\left (1_{\lbrace
Z(x,t)\ge u\rbrace} - \mathbb{P}(Z(x,t)\ge u)\right)\,dx\right )\,dt
\end{equation}
as $T\to +\infty$. First of all it is worth mentioning that the analysis of (%
\ref{area1}) can be carried out under the sole Assumption \ref{basic}, for
the length of level curves instead we need more regularity for $Z$, as
explained at the beginning of Section \ref{sec-MR}. Moreover, for the
excursion area the zero-level $u=0$ is still a cancellation point under long
memory circumstances, while this is not the case for the variance of level
curves. More importantly, for the excursion area the second-order chaos is
proportional to the random $L^2$ norm of the random field, whereas for level
curves the second-order chaos is proportional to a linear combination of the
$L^2$ norms of the eigenfunctions of the field; the two chaoses are hence
not perfectly correlated, unless the fields are asymptotically monochromatic.

\section{Proofs of the main results}

\label{secProofsMR}

In this Section we prove our main results. As anticipated in Section \ref%
{sec-MR}, the starting point of our argument is the Stroock-Varadhan
decomposition of $\mathcal{C}_T(u)$ in (\ref{rap_int}).

\subsection{The $L^2$-approximation}

Let $t\in \mathbb{R}$, the length of $u$-level curves can be \emph{formally}
represented as
\begin{equation*}
\mathcal{L}_u(t) = \int_{\mathbb{S}^2} \delta_u(Z(x,t)) \| \nabla
Z(x,t)\|\,dx,
\end{equation*}
where $\delta_u$ is the Dirac mass in $u$. For $\epsilon >0$ consider the $%
\epsilon$-approximating $u$-level curves length (see Lemma \ref{approx2} in
Appendix \ref{app-square})
\begin{equation}  \label{Leps}
\mathcal{L}_u^\epsilon(t) := \frac{1}{2\epsilon}\int_{\mathbb{S}^2}
1_{[u-\epsilon,u+ \epsilon]}(Z(x,t)) \| \nabla Z(x,t)\|\,dx
\end{equation}
and define accordingly the $\epsilon$-approximating random variable
\begin{equation}  \label{Ceps}
\mathcal{C}^\epsilon_T(u) := \int_0^T \left ( \mathcal{L}_u^\epsilon(t) -
\mathbb{E}[ \mathcal{L}_u^\epsilon(t)]\right )\,dt.
\end{equation}
The following technical result is crucial and will be proved in Appendix \ref%
{app-square}.

\begin{lemma}
\label{approx} As $\epsilon\to 0$,
\begin{equation}
\mathcal{C}_T^\epsilon(u) \to \mathcal{C}_T(u)
\end{equation}
both a.s. and in $L^2(\mathbb{P})$, where $\mathcal{C}_T(u)$ is defined as
in (\ref{rap_int}).
\end{lemma}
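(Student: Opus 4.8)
The plan is to reduce everything to the coarea (Federer) formula. For a.e.\ $\omega$ and every $t$ the map $Z(\cdot,t)$ is $\mathcal{C}^1$ on $\mathbb{S}^2$, so the coarea formula gives, for every $\epsilon>0$,
\[
\mathcal{L}_u^\epsilon(t)=\frac{1}{2\epsilon}\int_{\mathbb{S}^2}1_{[u-\epsilon,u+\epsilon]}(Z(x,t))\,\|\nabla Z(x,t)\|\,dx=\frac{1}{2\epsilon}\int_{u-\epsilon}^{u+\epsilon}\mathcal{L}_v(t)\,dv,
\]
i.e.\ the $\epsilon$-approximation is exactly a level-average of the true lengths. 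I would first dispose of the centering term: by time stationarity $\mathbb{E}[\mathcal{L}_u^\epsilon(t)]$ is constant in $t$, and averaging the Kac--Rice mean \eqref{meanGKF} over $v\in[u-\epsilon,u+\epsilon]$ yields $\mathbb{E}[\mathcal{L}_u^\epsilon(t)]=\frac{1}{2\epsilon}\int_{u-\epsilon}^{u+\epsilon}\sigma_1\,2\pi e^{-v^2/2}\,dv\to\sigma_1\,2\pi e^{-u^2/2}=\mathbb{E}[\mathcal{L}_u(t)]$, so that $\int_0^T\mathbb{E}[\mathcal{L}_u^\epsilon(t)]\,dt\to\int_0^T\mathbb{E}[\mathcal{L}_u(t)]\,dt$ converges outright. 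It then remains to handle the random part $\int_0^T\mathcal{L}_u^\epsilon(t)\,dt$.

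For the almost sure statement, fix $\omega\in\tilde\Omega$. By Lemma \ref{lem_basic} the value $u$ is regular for $Z(\cdot,t)$ for every $t$ in the co-null set $I(\omega)$; at a regular value the level set is a $\mathcal{C}^1$ curve that, by the implicit function theorem, varies continuously with the level, so $v\mapsto\mathcal{L}_v(t)$ is continuous at $v=u$ and therefore $\mathcal{L}_u^\epsilon(t)\to\mathcal{L}_u(t)$ for a.e.\ $t\in[0,T]$. To pass this under the time integral I would use Tonelli to write $\int_0^T\mathcal{L}_u^\epsilon(t)\,dt=\frac{1}{2\epsilon}\int_{u-\epsilon}^{u+\epsilon}F(v)\,dv$, where $F(v):=\int_0^T\mathcal{L}_v(t)\,dt$ is locally integrable near $u$ (indeed $\int_{u-\delta}^{u+\delta}F(v)\,dv=2\delta\int_0^T\mathcal{L}_u^\delta(t)\,dt$ and $\mathcal{L}_u^\delta(t)\le\frac{1}{2\delta}\int_{\mathbb{S}^2}\|\nabla Z(x,t)\|\,dx$, whose space-time integral has finite expectation by isotropy and \eqref{eqConv}), and then conclude from the continuity, equivalently the Lebesgue-point property, of $F$ at $u$.

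For the $L^2$ statement the key ingredient is a second-moment bound uniform in $\epsilon$. I would compute $\mathbb{E}[\mathcal{L}_u^\epsilon(s)\,\mathcal{L}_u^\epsilon(t)]$ by a Kac--Rice argument and establish $\sup_{0<\epsilon\le1}\mathbb{E}[(\mathcal{C}_T^\epsilon(u))^2]<\infty$, using that off the diagonal the Gaussian vector $(Z(x,t),Z(y,s),\nabla Z(x,t),\nabla Z(y,s))$ is non-degenerate with uniformly bounded density (its law being governed by $\Gamma$ and its spatial Hessian); this is precisely the estimate behind Lemma \ref{squareL}. Together with the a.s.\ convergence, uniform integrability of $\{(\mathcal{C}_T^\epsilon(u))^2\}$---secured by a uniform $(2+\delta)$-moment bound via Gaussian hypercontractivity---upgrades a.s.\ convergence to $L^2$ convergence; equivalently, one shows directly that $\{\mathcal{C}_T^\epsilon(u)\}$ is Cauchy in $L^2$ as $\epsilon\to0$ and identifies its limit with $\mathcal{C}_T(u)$ via the a.s.\ convergence.

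The hard part will be the uniform-in-$\epsilon$ control needed to exchange limit and integration: the pointwise convergence (in $x$ through coarea, in $t$ through regularity at regular values) is routine, but both conclusions rest on dominating $\mathcal{L}_u^\epsilon(t)$ independently of $\epsilon$. The naive dominating function $\sup_{|v-u|\le\epsilon_0}\mathcal{L}_v(t)$ is not evidently time-integrable, so for the a.s.\ part I would lean on the $L^1_{\mathrm{loc}}$/Lebesgue-point reformulation above, and for the $L^2$ part on the uniform Kac--Rice second-moment bound. That second-moment estimate, which hinges on the non-degeneracy of the joint law of the field and its gradient at distinct points, is the genuinely technical ingredient and the same one that underlies Lemma \ref{squareL}.
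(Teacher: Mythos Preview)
Your overall architecture---coarea formula, pointwise convergence at regular values, then a second-moment argument for $L^2$---matches the paper's, but the $L^2$ step contains a genuine gap. You invoke ``Gaussian hypercontractivity'' to obtain a uniform $(2+\delta)$-moment bound for $\mathcal{C}_T^\epsilon(u)$, and hence uniform integrability of the squares. That does not work here: hypercontractivity yields $\|F\|_p\le (p-1)^{q/2}\|F\|_2$ only for $F$ in a \emph{fixed} $q$-th chaos, whereas $\mathcal{L}_u^\epsilon(t)$ involves the indicator $1_{[u-\epsilon,u+\epsilon]}(Z)$ and therefore has an infinite chaos expansion with no uniform control on the degree. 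A uniform $L^2$ bound on $\mathcal{C}_T^\epsilon(u)$ thus cannot be upgraded to a uniform $L^{2+\delta}$ bound by this route, and your uniform-integrability step collapses. The ``Cauchy in $L^2$'' alternative you mention would require essentially the same missing estimate.

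The paper closes this gap differently and more simply. Working at fixed $t$ (Lemma~\ref{approx2}), it shows $\mathbb{E}[\mathcal{L}_{u}^\epsilon(t)^2]\to\mathbb{E}[\mathcal{L}_u(t)^2]$: Fatou gives the lower bound, while for the upper bound one writes, via coarea and Jensen,
\[
\mathbb{E}\big[\mathcal{L}_u^\epsilon(t)^2\big]=\mathbb{E}\Big[\Big(\tfrac{1}{2\epsilon}\int_{u-\epsilon}^{u+\epsilon}\mathcal{L}_v(t)\,dv\Big)^2\Big]\le \tfrac{1}{2\epsilon}\int_{u-\epsilon}^{u+\epsilon}\mathbb{E}\big[\mathcal{L}_v(t)^2\big]\,dv\longrightarrow \mathbb{E}\big[\mathcal{L}_u(t)^2\big],
\]
the last step using continuity of $v\mapsto\mathbb{E}[\mathcal{L}_v(t)^2]$, which is established by the two-point Kac--Rice formula together with explicit, uniformly bounded conditional-moment estimates for $\nabla Z$ given $(Z(x,t),Z(y,t))=(v,v)$. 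A.s.\ convergence plus convergence of the $L^2$ norms then gives $L^2$ convergence of $\mathcal{L}_u^\epsilon(t)$; by stationarity $\|\mathcal{L}_u^\epsilon(t)-\mathcal{L}_u(t)\|_{L^2}$ is constant in $t$, so the time integral follows immediately. A smaller point: in your a.s.\ argument you write ``continuity, equivalently the Lebesgue-point property'' of $F$ at $u$; these are not equivalent, and you have not actually established either for the specific level $u$---you would still need a dominating function (or an argument like the paper's) to pass the limit through $\int_0^T$.
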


\subsection{The Wiener chaos expansion}

\label{sec-Wiener}

In order to derive the analytic form for \eqref{exp_s} we get inspired by
the chaotic decomposition for level curves of Gaussian random fields found
in e.g. \cite{KL01, MPRW16}. Let us introduce the collection of coefficients
$\{\alpha_{n,m} : n,m\geq 1\}$ and $\{\beta_{l}(u) : l\geq 0\}$, related to
the (formal) Hermite expansions of the norm $\| \cdot \|$ in $\mathbb{R}^2$
and the Dirac mass $\delta_u(\cdot)$ respectively:
\begin{equation}  \label{e:beta}
\beta_{q}(u):= \phi(u)H_{q}(u),
\end{equation}
where $\phi$ is the standard Gaussian probability density function, $H_{l}$
denotes the $l$-th Hermite polynomial and $\alpha_{n,m}:=0$ but for the case
$n,m$ even
\begin{equation}  \label{e:alpha}
\alpha_{2n,2m}:=\sqrt{\frac{\pi}{2}}\frac{(2n)!(2m)!}{n! m!}\frac{1}{2^{n+m}}
p_{n+m}\left (\frac14 \right),
\end{equation}
where for $N=0, 1, 2, \dots $ and $x\in \mathbb{R}$
\begin{equation}  \label{pN}
p_{N}(x) :=\sum_{j=0}^{N}(-1)^{j}\ \ (-1)^{N}{\binom{N }{j}}\ \ \frac{(2j+1)!%
}{(j!)^2} x^j.
\end{equation}

%we start by exploiting the already proved chaotic expansion of $\mathcal{L}_u(t)$, for fixed  $t\in[0,T]$
%\begin{flalign*}
%&\mathcal{L}_u(t)-\mathbb{E}[\mathcal{L}_u(t)]={\sigma_1}\,\sum_{q=1}^{+\infty}%
%\sum_{m=0}^{q}\sum_{k=0}^{m} \frac{\alpha _{k,m-k}\beta _{q-m}(u)}{%
%(k)!(m-k)!(q-m)!} \, \times \\
%&\qquad\qquad \times\,\int_{\mathbb{S}^2}\!\!
%H_{q-m}(Z(x,t))H_{k}(\widetilde\partial_1 Z(x,t))H_{m-k}(\widetilde{\partial}%
%_2 Z(x,t))\,dx\,dt\,,
%\end{flalign*}
%Indeed, we know (see \cite[Section 7.2.1]{Ro:15b}) that the random variable $%
%\mathcal{L}_u(t)$ can be formally written as
%\begin{equation}  \label{formalS}
%\mathcal{L}_u(t) = \int_{ \mathbb{S}^2} \delta_u(Z(x,t))\| \nabla Z(x,t)
%\|\,dx\, ,
%\end{equation}
%where $\delta_u$ denotes the Dirac mass in $u$.
%
%Throughout the section we will rewrite (\ref{formalS}) as
%\begin{equation}  \label{formal2S}
%\mathcal{L}_u (t)= \sigma_1\int_{\mathbb{S}^2} \delta_u(Z(x,t)) \sqrt{\left(\widetilde \partial_1 Z(x,t)\right)^2+\left(\widetilde \partial_2
%Z(x,t)\right)^2}\,dx\,,
%\end{equation}
In view of (\ref{sigma1}), we define the normalized gradient and derivatives
for $j=1,2$
\begin{equation*}
\widetilde \nabla := \nabla /\sigma_1,\quad \widetilde \partial_{j;x} :=
\partial_{j;x}/\sigma_1.
\end{equation*}

\begin{proposition}[Chaotic expansion for $\mathcal{C}_T(u)$]
\label{teoexpS} For every $T>0$ and $q\geq 1$,
\begin{flalign}  \label{e:ppS}
\mathcal{C}_T(u)[q]
&= \sigma_1 \,\sum_{m=0}^{q}%
\sum_{k=0}^{m} \frac{\alpha _{k,m-k}\beta_{q-m}(u) }{(k)!(m-k)!(q-m)!} \nonumber \\
& \quad \quad \times \int_0^T \int_{\mathbb{S}^2} H_{q-m}(Z(x,t))
H_{k}(\widetilde\partial_{1;x} Z(x,t))H_{m-k}(\widetilde\partial_{2;x}
Z(x,t))\,dx\,dt\,.
\end{flalign}
As a consequence, one has the representation
\begin{flalign}  \label{chaosexpS}
\mathcal{C}_T(u) &=\sigma_1\,\sum_{q=1}^{+\infty}%
\sum_{m=0}^{q}\sum_{k=0}^{m} \frac{\alpha _{k,m-k}\beta _{q-m}(u)}{%
(k)!(m-k)!(q-m)!} \, \times \nonumber \\
&\quad \times\, \int_0^T\int_{\mathbb{S}^2}\!\!
H_{q-m}(Z(x,t))H_{k}(\widetilde\partial_{1;x} Z(x,t))H_{m-k}(\widetilde{\partial}%
_{2;x} Z(x,t))\,dx\,dt\,,
\end{flalign}
where the series converges in $L^2(\Omega)$.
\end{proposition}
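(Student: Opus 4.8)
The plan is to work with the $\epsilon$-approximation $\mathcal{L}_u^\epsilon(t)$ in \eqref{Leps}, obtain its chaos expansion by multiplying two convergent Hermite series and integrating, and then to pass to the limit $\epsilon\to 0$ via Lemma \ref{approx}. The two series are the Hermite expansion of the Dirac-mass approximation $v\mapsto\frac{1}{2\epsilon}1_{[u-\epsilon,u+\epsilon]}(v)$ and the expansion in $\mathbb{R}^2$ of the Euclidean norm $(w_1,w_2)\mapsto\|(w_1,w_2)\|$. The decisive structural fact is that, at any fixed $(x,t)$, the three Gaussian variables $Z(x,t)$, $\widetilde\partial_{1;x}Z(x,t)$, $\widetilde\partial_{2;x}Z(x,t)$ are independent and standard $\mathcal{N}(0,1)$: indeed $\sigma_0^2=1$ by \eqref{somma1}, while isotropy forces $\mathbb{E}[Z\,\partial_{j;x}Z]=\tfrac12\partial_{j;x}\mathbb{E}[Z^2]=0$ and $\mathbb{E}[\partial_{1;x}Z\,\partial_{2;x}Z]=0$, each $\partial_{j;x}Z$ having variance $\sigma_1^2$ (see Lemma \ref{lemcov}). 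Consequently a product $H_a(Z)H_b(\widetilde\partial_{1;x}Z)H_c(\widetilde\partial_{2;x}Z)$ is, pointwise in $(x,t)$, an element of the Wiener chaos of order $a+b+c$, and this membership survives integration over $\mathbb{S}^2\times[0,T]$.

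Concretely, since both $v\mapsto\frac{1}{2\epsilon}1_{[u-\epsilon,u+\epsilon]}(v)$ (for fixed $\epsilon>0$) and $(w_1,w_2)\mapsto\|(w_1,w_2)\|$ belong to the relevant Gaussian $L^2$-spaces, they admit the convergent expansions
$$\frac{1}{2\epsilon}1_{[u-\epsilon,u+\epsilon]}(v)=\sum_{l\ge 0}\frac{\beta_l^\epsilon(u)}{l!}H_l(v),\qquad \|(w_1,w_2)\|=\sum_{n,p\ge 0}\frac{\alpha_{n,p}}{n!\,p!}H_n(w_1)H_p(w_2),$$
with $\beta_l^\epsilon(u):=\frac{1}{2\epsilon}\int_{u-\epsilon}^{u+\epsilon}H_l(v)\phi(v)\,dv$ and $\alpha_{n,p}$ as in \eqref{e:alpha} (the evaluation of these norm-coefficients, in particular their vanishing unless both indices are even, is the classical computation used in \cite{KL01,MPRW16}). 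Writing $\|\nabla Z\|=\sigma_1\|(\widetilde\partial_{1;x}Z,\widetilde\partial_{2;x}Z)\|$, substituting both expansions into \eqref{Leps}, and using the pointwise independence recalled above to multiply the two series term by term, I would group the resulting products by their total Hermite degree $q=l+n+p$. Setting $l=q-m$, $n=k$, $p=m-k$ isolates the projection of $\mathcal{L}_u^\epsilon(t)$ onto the $q$-th chaos as
$$\sigma_1\sum_{m=0}^q\sum_{k=0}^m\frac{\alpha_{k,m-k}\,\beta_{q-m}^\epsilon(u)}{k!\,(m-k)!\,(q-m)!}\int_{\mathbb{S}^2}H_{q-m}(Z(x,t))\,H_k(\widetilde\partial_{1;x}Z(x,t))\,H_{m-k}(\widetilde\partial_{2;x}Z(x,t))\,dx.$$
Integrating over $t\in[0,T]$ and subtracting the mean (which is exactly the $q=0$ term) yields the same formula for $\mathcal{C}_T^\epsilon(u)[q]$ with $\beta_{q-m}^\epsilon$ in place of $\beta_{q-m}$, for $q\ge 1$.

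It remains to let $\epsilon\to 0$. By Lemma \ref{approx}, $\mathcal{C}_T^\epsilon(u)\to\mathcal{C}_T(u)$ in $L^2(\Omega)$; since the orthogonal projection onto the $q$-th chaos is a $1$-Lipschitz operator on $L^2(\Omega)$, one gets $\mathcal{C}_T^\epsilon(u)[q]\to\mathcal{C}_T(u)[q]$ in $L^2(\Omega)$ for each $q$. On the other hand $\beta_l^\epsilon(u)\to H_l(u)\phi(u)=\beta_l(u)$ by continuity of $H_l\phi$, so the explicit coefficients converge to those in \eqref{e:ppS}; identifying the two limits gives \eqref{e:ppS}. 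Finally, \eqref{chaosexpS} follows by summing \eqref{e:ppS} over $q\ge 1$ and invoking the $L^2(\Omega)$-convergence of the Stroock--Varadhan expansion \eqref{exp_s}. The genuinely delicate point, and the main obstacle, is the rigorous justification of the termwise multiplication of the two Hermite series followed by integration over $\mathbb{S}^2\times[0,T]$: one must control the exchange of the double summation with the space-time integral in $L^2(\Omega)$ tightly enough to read off each chaotic projection. This is where the finiteness of $\mathrm{Var}(\mathcal{C}_T(u))$ provided by Lemma \ref{squareL}, together with the $L^2$-convergence of the Hermite series with respect to the Gaussian measure, must be combined to secure the interchange.
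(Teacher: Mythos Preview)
Your proposal is correct and follows essentially the same route as the paper: expand the $\epsilon$-approximation $\mathcal{C}_T^\epsilon(u)$ via the Hermite series for the indicator and for the Euclidean norm, read off the $q$-th chaotic projection using the pointwise independence of $(Z,\widetilde\partial_{1}Z,\widetilde\partial_{2}Z)$, integrate in $(x,t)$, and then pass to the limit $\epsilon\to 0$ via Lemma \ref{approx} together with $\beta_l^\epsilon(u)\to\beta_l(u)$. The interchange of summation and integration that you flag as the delicate point is exactly what the paper dispatches in one line as ``standard arguments based on Jensen's inequality and dominated convergence'' (using $\int_0^T\int_{\mathbb{S}^2}dx\,dt<\infty$), so your concern is well placed but not a gap.
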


The proof of Proposition \ref{teoexpS} is postponed to the Appendix \ref{app-chaos}:
first we compute the chaotic expansion of $\mathcal{C}^\epsilon_T(u)$ in (%
\ref{Ceps}), then we let $\epsilon\to 0$ obtaining (\ref{chaosexpS}) thanks
to Lemma \ref{approx}.

Let us investigate the chaotic components (\ref{e:ppS}) starting from the
case $q=1$.

\subsubsection{The first chaotic projection}

The first term in the series (\ref{chaosexpS}) is
\begin{eqnarray}
\mathcal{C}_T(u)[1] &=& \sigma_1 \sqrt{\frac{\pi}{2}} u\phi(u) \int_{\mathbb{%
S}^2}Y_{0,0}(x)\,dx \int_0^T a_{0,0}(t)\,dt  \notag \\
&=& \sigma_1 \sqrt 2 \pi u\phi(u) \int_0^T a_{0,0}(t)\,dt.
\end{eqnarray}

\begin{lemma}
\label{lemvar1} We have, as $T\to +\infty$,
\begin{equation*}
\lim_{T\to\infty} \frac{\mathop{\rm Var}(\mathcal{C}_T(u)[1])}{T^{2-\beta_0}}
= \sigma_1^2 2\pi^2 u^2 \phi(u)^2\frac{2C_0(0)}{(1-\beta_0 )(2-\beta_0 )}%
,\quad \text{if } \beta_0\in (0,1)
\end{equation*}
and
\begin{equation*}
\lim_{T\to\infty}\frac{\mathop{\rm Var}(\mathcal{C}_T(u)[1])}{T} =
\sigma_1^2 2\pi^2 u^2\phi(u)^2 \int_{\mathbb{R}} C_0(\tau)\,d\tau,\quad
\text{if }\beta_0=1.
\end{equation*}
\end{lemma}

The proof of Lemma \ref{lemvar1} is identical to the proof of Lemma 4.2 in
\cite{MRV:20} and hence omitted. Recall that Assumption \ref{basic3} ensures
that $C_0(0)>0$ and that for $\beta_0=1$
\begin{equation*}
\int_{\mathbb{R}} C_0(\tau)\,d\tau \in (0,+\infty);
\end{equation*}
it is worth noticing that $\mathcal{C}_T(u)[1]\ne 0$ if and only if $u\ne 0$.

\subsubsection{The second order chaotic projection}

Recall the notation ($\lambda _{\ell }:=\ell(\ell +1)$) from (\ref{somma1})
and (\ref{sigma1})
\begin{equation*}
\sigma _{0}^{2} =\sum_{\ell }\frac{(2\ell +1)}{4\pi }C_{\ell }(0)=1,\quad
\sigma _{1}^{2} =\sum_{\ell }\frac{(2\ell +1)}{4\pi }C_{\ell }(0)\frac{%
\lambda _{\ell }}{2}.
\end{equation*}

\begin{proposition}
\label{propcaos2} The second order chaotic component can be written as
\begin{flalign}\label{2chaos}
\mathcal{C}_T(u)[2]&=\frac{\sigma _{1}}{2}\sqrt{\frac{\pi }{2}}\phi (u)\sum_{\ell
}(2\ell +1)\left\{ (u^{2}-1)+\frac{\lambda _{\ell }/2}{\sigma _{1}^{2}}%
\right\} \int_{[0,T]} \left\{ \widehat{C}_{\ell }(t)-C_{\ell }(0)\right\} dt \,,
%\\ &=\frac{\sigma _{1}^{2}}{2}\sqrt{\frac{\pi }{2}}%
%\phi (u)\sum_{\ell=0}^\infty(2\ell +1)\left\{ (u^{2}-1)+\frac{\lambda _{\ell }}{%
%\sigma _{1}^{2}}\right\} \int_{[0,T]}\frac{C_{\ell }(0)}{4\pi }\int_{\mathbb{%
%S}^{2}}H_{2}(\widehat{Z}_{\ell }(x,t))dxdt\,,
\end{flalign}
where $\widehat{C}_{\ell }(t)$ is the sample power spectrum (\ref{sampletris}%
)
\begin{equation*}
\widehat{C}_{\ell }(t)=\frac{1}{2\ell +1}\sum_{m=-\ell }^{\ell }\left\vert
a_{\ell m}(t)\right\vert ^{2}=\frac{1}{2\ell +1}\int_{\mathbb{S}^{2}}Z_{\ell
}(x,t)^{2}dx\text{ .}
\end{equation*}
\end{proposition}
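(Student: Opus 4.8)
The plan is to start from the general chaotic expansion formula in Proposition~\ref{teoexpS}, specialize it to $q=2$, and show that the apparently complicated triple sum over Hermite products collapses into the clean expression \eqref{2chaos} involving only the sample power spectra $\widehat{C}_\ell$. First I would write out \eqref{e:ppS} for $q=2$: the outer sum over $m$ runs from $0$ to $2$, and for each $m$ the inner sum over $k$ runs from $0$ to $m$. Since $\alpha_{n,m}=0$ unless both indices are even (see \eqref{e:alpha}), most of the terms vanish. The surviving contributions come from the pairs $(k,m-k)$ with both entries even, namely $(0,0)$ (giving $\alpha_{0,0}$ paired with $H_2(Z)$), and $(0,2)$ together with $(2,0)$ (giving $\alpha_{0,2}=\alpha_{2,0}$ paired with $H_2$ of one gradient component and $H_0$ of the field).

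The key computational step is then to evaluate the specific coefficients $\alpha_{0,0}$ and $\alpha_{0,2}=\alpha_{2,0}$ from \eqref{e:alpha}--\eqref{pN}, together with $\beta_0(u)=\phi(u)$ and $\beta_2(u)=\phi(u)H_2(u)=\phi(u)(u^2-1)$ from \eqref{e:beta}. I would compute $\alpha_{0,0}=\sqrt{\pi/2}\,p_0(1/4)=\sqrt{\pi/2}$ and $\alpha_{0,2}=\alpha_{2,0}=\sqrt{\pi/2}\cdot\tfrac12\,p_1(1/4)$, and then assemble the three surviving terms with their factorial denominators. At this stage the integrand is a linear combination of $H_2(Z(x,t))\phi(u)(u^2-1)$, $H_2(\widetilde\partial_{1;x}Z)\phi(u)$, and $H_2(\widetilde\partial_{2;x}Z)\phi(u)$, integrated over $\mathbb{S}^2\times[0,T]$.

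The next step is to convert these field-level integrals into expressions in the harmonic components. Writing $Z=\sum_\ell Z_\ell$, and using $H_2(X)=X^2-1$, the spatial integral $\int_{\mathbb{S}^2}H_2(Z(x,t))\,dx=\int_{\mathbb{S}^2}Z(x,t)^2\,dx-4\pi$ splits (by orthonormality of the $Y_{\ell,m}$, which makes the cross terms $\int_{\mathbb{S}^2}Z_\ell Z_{\ell'}\,dx$ vanish for $\ell\ne\ell'$) into $\sum_\ell(2\ell+1)\widehat{C}_\ell(t)-4\pi$, recalling \eqref{sampletris}. The genuinely more delicate part is the gradient integrals $\int_{\mathbb{S}^2}\|\nabla Z_\ell\|^2$-type quantities: I would invoke the covariance structure of $(Z,\nabla Z)$ recorded in Lemma~\ref{lemcov} (Appendix~\ref{app-cov}), and in particular the identity $\int_{\mathbb{S}^2}\|\nabla Z_\ell(x,t)\|^2\,dx=\lambda_\ell\int_{\mathbb{S}^2}Z_\ell(x,t)^2\,dx=\lambda_\ell(2\ell+1)\widehat{C}_\ell(t)$, which follows from integration by parts against the Helmholtz equation $\Delta Z_\ell=-\lambda_\ell Z_\ell$. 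Summing the two gradient directions $j=1,2$ and combining with the $\sigma_1$ normalization then produces the factor $\lambda_\ell/(2\sigma_1^2)$.

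The main obstacle I anticipate is bookkeeping: correctly identifying which of the nine nominal $(m,k)$ terms survive, getting the numerical coefficients $\alpha_{0,0}$, $\alpha_{0,2}$ exactly right, and cleanly separating the spatial gradient integral into a per-multipole contribution proportional to $\lambda_\ell\widehat{C}_\ell(t)$. Once the field integral contributes the $(u^2-1)$ piece and the combined gradient integrals contribute the $\lambda_\ell/(2\sigma_1^2)$ piece, the per-multipole coefficient becomes $\tfrac{\sigma_1}{2}\sqrt{\pi/2}\,\phi(u)(2\ell+1)\{(u^2-1)+\lambda_\ell/(2\sigma_1^2)\}$, and the constant terms ($-4\pi$ from the field integral and the analogous constants from the gradient integrals) reorganize into the centering $\widehat{C}_\ell(t)-C_\ell(0)$ after using $\mathbb{E}[\widehat{C}_\ell(t)]=C_\ell(0)$ and $\int_{\mathbb{S}^2}H_2(\cdot)\,dx$ having mean zero. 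I would finish by verifying that the centering constants match, so that the bracket inside the time integral is exactly $\widehat{C}_\ell(t)-C_\ell(0)$, yielding \eqref{2chaos}.
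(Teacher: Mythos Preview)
Your proposal is correct and follows essentially the same route as the paper: specialize \eqref{e:ppS} to $q=2$, keep only the terms with $(k,m-k)\in\{(0,0),(2,0),(0,2)\}$, plug in $\alpha_{0,0}=\sqrt{\pi/2}$, $\alpha_{2,0}=\alpha_{0,2}=\tfrac12\sqrt{\pi/2}$, $\beta_0(u)=\phi(u)$, $\beta_2(u)=\phi(u)(u^2-1)$, and then convert the gradient integral via integration by parts against the Helmholtz equation (the paper phrases this as the Green--Stokes identity $\int_{\mathbb S^2}\|\nabla Z\|^2\,dx=-\int_{\mathbb S^2}Z\,\Delta Z\,dx$ applied to the full field $Z$ before decomposing into harmonics, which automatically kills the cross terms $\int\langle\nabla Z_\ell,\nabla Z_{\ell'}\rangle$ for $\ell\neq\ell'$). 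Your appeal to Lemma~\ref{lemcov} is superfluous, since that lemma records two-point covariances rather than the pathwise $L^2(\mathbb S^2)$ identity you actually need.
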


Note that for every $t\in \mathbb{R}$
\begin{equation*}
\mathbb{E}[\widehat C_\ell(t)] = C_\ell(0).
\end{equation*}
\proof[Proof of Proposition \ref{propcaos2}] From Proposition \ref{teoexpS},
\begin{flalign*}
\mathcal{C}_T(u)[2]
&= \sigma_1 \,\frac{\alpha _{0,0}\beta _{2}(u)}{2}
\,\int_0^T \int_{\mathbb{S}^2} H_{2}(Z(x,t))\,dx\,dt \\
&\quad+\sigma_1 \,\frac{\alpha _{2,0}\beta _{0}(u)}{2} \, \int_0^T\int_{\mathbb{S}^2}
\left ( \langle \widetilde \nabla Z(x,t), \widetilde \nabla Z(x,t)\rangle - 2 \right )\,dx\,dt.
\end{flalign*}
Recall the basic (Green-Stokes) identity for a regular function $T:\mathbb{S}%
^2\to \mathbb{R}$
\begin{equation*}
\int_{\mathbb{S}^{2}} \langle \nabla T, \nabla T \rangle dx=-\int_{\mathbb{S}%
^{2}}T\Delta T\,dx,
\end{equation*}%
then
\begin{flalign*}
\mathcal{C}_T(u)[2] &= \sigma_1 \frac{\alpha _{00}\beta _{2}(u)}{2}\int_{[0,T]}\int_{\mathbb{S}%
^{2}} \left(Z(x,t)^{2}-1\right ) dxdt
\\
&\quad -\sigma_1 \frac{\alpha _{20}\beta _{0}(u)}{2}\frac{1}{\sigma_1^2}\int_{[0,T]}\int_{%
\mathbb{S}^{2}} \left ( Z(x,t) \Delta Z(x,t) - 2\sigma_1^2\right ) d
x dt\\
&=\sigma_1 \frac{\alpha _{00}\beta _{2}(u)}{2}\int_{[0,T]}\int_{\mathbb{S}%
^{2}}\left\{ \left(\sum_{\ell }Z_{\ell }(x,t)\right)^{2}-\sum_{\ell }\frac{%
(2\ell +1)}{4\pi }C_{\ell }\right\} dxdt \\
&\quad -\sigma_1 \frac{\alpha _{20}\beta _{0}(u)}{2}\frac{1}{\sigma_1^2}\int_{[0,T]}\int_{%
\mathbb{S}^{2}}\left\{ \sum_{\ell }Z_{\ell }(x,t)\Delta \sum_{\ell ^{\prime }}Z_{\ell ^{\prime }}(x,t)-2\sigma_1^2\right\} dx dt\\
&=\sigma_1 \frac{\alpha _{00}\beta _{2}(u)}{2}\int_{[0,T]}\int_{\mathbb{S}%
^{2}}\left\{ \sum_{\ell} \sum_{\ell^{\prime}}Z_{\ell }(x,t)Z_{\ell^{\prime}
}(x,t)-\sum_{\ell }\frac{(2\ell +1)}{4\pi }C_{\ell }\right\} dxdt \\
&+\sigma_1 \frac{\alpha _{20}\beta _{0}(u)}{2}\frac{1}{\sigma_1^2}\int_{[0,T]}\int_{%
\mathbb{S}^{2}}\left\{ \sum_{\ell }\sum_{\ell ^{\prime }}Z_{\ell
}(x,t)\lambda _{\ell ^{\prime }}Z_{\ell ^{\prime }}(x,t)-\sum_{\ell } \frac{%
(2\ell +1)}{4\pi }C_{\ell }\lambda _{\ell }\right\} dxdt\\
&=\sigma_1 \frac{\alpha _{00}\beta _{2}(u)}{2}\int_{[0,T]}\left\{ \sum_{\ell }\int_{\mathbb{S}%
^{2}} Z_{\ell }(x,t)^{2} dx-4\pi\sum_{\ell }\frac{(2\ell +1)}{4\pi }C_{\ell }\right\} dt \\
&+\sigma_1 \frac{\alpha _{20}\beta _{0}(u)}{2}\frac{1}{\sigma_1^2}%
\int_{[0,T]}\left\{ \sum_{\ell }\lambda _{\ell }\int_{\mathbb{S}^{2}}Z_{\ell
}(x,t)^2 dx-4\pi \sum_{\ell } \lambda _{\ell } \frac{(2\ell +1)}{4\pi
}C_{\ell } \right\} dt\\
&=\sigma_1 \frac{\alpha _{00}\beta _{2}(u)}{2}\int_{[0,T]}\left\{ \sum_{\ell
}(2\ell+1)\widehat{C}_{\ell }(t)-\sum_{\ell }(2\ell +1)C_{\ell }\right\} dt
\\
&+\sigma_1 \frac{\alpha _{20}\beta _{0}(u)}{2}\frac{1}{\sigma_1^2}%
\int_{[0,T]}\left\{ \sum_{\ell }\lambda _{\ell }(2\ell+1)\widehat{C}_{\ell
}(t)- \sum_{\ell } \lambda _{\ell } (2\ell +1)C_{\ell } \right\} dt \\
&=\sigma_1 \frac{\alpha _{00}\beta _{2}(u)}{2}\int_{[0,T]}\sum_{\ell }(2\ell
+1)\left\{ \widehat{C}_{\ell }(t)-C_\ell\right\} dt \\
&+\sigma_1 \frac{\alpha _{20}\beta _{0}(u)}{2}\frac{1}{\sigma_1^2}%
\int_{[0,T]}\sum_{\ell }(2\ell +1)\ell (\ell +1)\left\{ \widehat{C}_{\ell
}(t)-C_\ell\right\} dt,
\end{flalign*}
where $\widehat{C}_{\ell }$ is as in (\ref{sampletris}). Also,
\begin{equation*}
\beta _{0}(u)=\phi (u)\text{ , }\beta _{2}(u)=\phi (u)(u^{2}-1)\text{ , }%
\alpha _{00}=\sqrt{\frac{\pi }{2}}\text{ , }\alpha _{02}=\frac{1}{2}\sqrt{%
\frac{\pi }{2}}
\end{equation*}%
whence%
\begin{equation*}
\frac{\alpha _{00}\beta _{2}(u)}{2}=\frac{1}{2}\sqrt{\frac{\pi }{2}}\phi
(u)(u^{2}-1)\text{ , }\frac{\alpha _{20}\beta _{0}(u)}{2}=\frac{1}{4}\sqrt{%
\frac{\pi }{2}}\phi (u)\text{ .}
\end{equation*}%
We can then write the second-order chaos more compactly as%
\begin{eqnarray*}
\mathcal{C}_T(u)[2] &=& \frac{\sigma _{1}}{2}\sqrt{\frac{\pi }{2}}\phi
(u)(u^{2}-1)\int_{[0,T]}\sum_{\ell }(2\ell +1)\left\{ \widehat{C}_{\ell
}(t)-C_{\ell }\right\} dt \\
&&+\frac{\sigma _{1}}{4}\sqrt{\frac{\pi }{2}}\phi (u)\frac{1}{\sigma _{1}^{2}%
}\int_{[0,T]}\sum_{\ell }(2\ell +1)\lambda _{\ell }\left\{ \widehat{C}_{\ell
}(t)-C_{\ell }\right\} dt \\
&=& \frac{\sigma _{1}}{2}\sqrt{\frac{\pi }{2}}\phi (u)\sum_{\ell }(2\ell
+1)\left\{ (u^{2}-1)+\frac{\lambda _{\ell }/2}{\sigma _{1}^{2}}\right\}
\int_{[0,T]} \left\{ \widehat{C}_{\ell }(t)-C_{\ell }\right\} dt
\end{eqnarray*}
thus concluding the proof. \endproof

\begin{remark}
\label{2chaos-hermite} \textrm{The second order chaos can be also written in
terms of Hermite polynomials, since
\begin{eqnarray*}
&& \int_{[0,T]}\left\{ \widehat{C}_{\ell }(t)-\mathbb{E}\widehat{C}_{\ell
}(t)\right\} dt \\
&&= \int_{[0,T]}\left\{\frac{1}{2\ell+1}\int_{\mathbb{S}^2}Z_\ell(x,t)^2dx-%
\frac{1}{2\ell+1}\int_{\mathbb{S}^2}\mathbb{E}[Z_\ell(x,t)^2]\right\} dt \\
&&= \int_{[0,T]}\left\{\frac{1}{2\ell+1}\int_{\mathbb{S}^2}Z_\ell(x,t)^2dx-%
\frac{1}{2\ell+1}\int_{\mathbb{S}^2}\frac{2\ell+1}{4\pi}C_{\ell
}(0)dx\right\} dt \\
&&= \int_{[0,T]}\frac{1}{2\ell+1}\int_{\mathbb{S}^2}\left\{Z_\ell(x,t)^2dx-%
\frac{2\ell+1}{4\pi}C_{\ell }(0)\right\} dxdt \\
&&= \int_{[0,T]}\frac{1}{2\ell+1}\frac{2\ell+1}{4\pi}C_{\ell }(0)\int_{%
\mathbb{S}^2}\left\{\widehat Z_\ell(x,t)^2dx-1\right\} dxdt \,,
\end{eqnarray*}
so that
\begin{flalign*}
&\mathcal{C}_T(u)[2]=\frac{\sigma _{1}}{2}\sqrt{\frac{\pi }{2}}\phi (u)\sum_{\ell }  \frac{C_{\ell }(0)(2\ell +1)}{4\pi } \left\{ (u^{2}-1)+\frac{\lambda _{\ell }/2}{\sigma _{1}^{2}}\right\} \int_0^T\int_{\mathbb{S}^{2}}H_{2}(\widehat Z_{\ell }(x,t))dxdt\text{,}
\end{flalign*}
as anticipated in (\ref{caos2}). }
\end{remark}

\begin{remark}[Non-asymptotic monochromatic field]
\label{2chaos_mono}\textrm{In the special case of monochromatic fields where
\begin{equation*}
C_{\ell }(0)\neq 0\Leftrightarrow \ell =\ell ^{\star },
\end{equation*}
we have that $\left ( \sigma^2_0= \frac{2\ell+1}{4\pi}C_\ell(0)=1\right )$ $%
\sigma^2 _{1}=\frac{\ell (\ell +1)}{2}$ and we get a straightforward
generalization of the standard non-asymptotic expression for the
second-order chaos for the boundary length of a time-dependent random
spherical harmonic, namely
\begin{eqnarray}
\mathcal{C}_T(u)[2] =\sqrt{\frac{\ell (\ell +1)}{2}}\frac{1}{2}\sqrt{\frac{%
\pi }{2}}u^{2}\phi (u)\int_{[0,T]}\int_{\mathbb{S}^{2}} H_2(Z_\ell(x,t))
dxdt.  \notag
\end{eqnarray}
(Note that $\widehat{Z}_\ell = Z_\ell$ in this case.) }
\end{remark}

\begin{remark}
\textrm{It is clear from (\ref{2chaos}) that the disappearance of the
second-order chaos at $u=0$ (closely related to the Berry's cancellation
phenomenon) does not occur for non-monochromatic space-time random fields --
although it does occur in the non-asymptotic monochromatic case (Remark \ref%
{2chaos_mono}). As we already showed in Section \ref{sec-MR}, the
cancellation can occur asymptotically (as $T\rightarrow \infty $) in some
cases of long range dependent fields where the memory parameter attains its
minimum on a single multipole $\ell^{\star}$; this can be viewed as a form
of asymptotic monochromatic behaviour. }
\end{remark}

From (\ref{caos2}) we have
\begin{flalign}\label{varlunga2}
&\operatorname{Var}\left( \mathcal{C}_{T}(u)[2]\right)  \nonumber\\
&=\frac{\sigma _{1}^{2}\pi }{8}\phi ^{2}(u)\mathbb{E}\left[ \left(
\sum_{\ell }(2\ell +1)\left\{ (u^{2}-1)+\frac{\lambda _{\ell }/2}{\sigma
_{1}^{2}}\right\} \frac{C_{\ell }(0)}{4\pi }\int_{[0,T]}\int_{\mathbb{S}^{2}}H_{2}(\widehat{Z}_{\ell }(x,t))dxdt\right) ^{2}\right] \nonumber \\
&=\frac{\sigma _{1}^{2}\pi }{8}\phi ^{2}(u)\sum_{\ell }(2\ell
+1)^{2}\left\{ (u^{2}-1)+\frac{\lambda _{\ell }/2}{\sigma _{1}^{2}}\right\}
^{2}\frac{C_{\ell }(0)^{2}}{(4\pi )^{2}}\mathbb{E}\left[ \left(
\int_{[0,T]}\int_{\mathbb{S}^{2}}H_{2}(\widehat{Z}_{\ell }(x,t))dxdt\right)
^{2}\right] \nonumber \\
&=\frac{\sigma _{1}^{2}\pi }{8}\phi ^{2}(u)\sum_{\ell }(2\ell
+1)^{2}\left\{ (u^{2}-1)+\frac{\lambda _{\ell }/2}{\sigma _{1}^{2}}\right\}
^{2}\frac{C_{\ell }(0)^{2}}{(4\pi )^{2}}\int_{[0,T]^{2}}\int_{\mathbb{S}^{2}\times \mathbb{S}^{2}}\mathbb{E}\left[ H_{2}(\widehat{Z}_{\ell
}(x,t))H_{2}(\widehat{Z}_{\ell }(y,s))\right] dxdtdyds \nonumber \\
&=\frac{\sigma _{1}^{2}\pi }{8}\phi ^{2}(u)\sum_{\ell }\left\{ (u^{2}-1)+\frac{\lambda _{\ell }/2}{\sigma _{1}^{2}}\right\}
^{2}\int_{[0,T]^{2}}\int_{\mathbb{S}^{2}\times \mathbb{S}^{2}}2\,\frac{(2\ell +1)^{2}}{(4\pi )^{2}}C_{\ell
}(t-s)^{2}P_{\ell }(\langle x,y\rangle
)^{2}dxdtdyds \nonumber \\
&=\frac{\sigma _{1}^{2}\pi }{8}\phi ^{2}(u)\sum_{\ell }\frac{2\,(2\ell
+1)^{2}}{(4\pi )^{2}}\left\{ (u^{2}-1)+\frac{\lambda _{\ell }/2}{\sigma
_{1}^{2}}\right\} ^{2}\int_{[0,T]^{2}}C_{\ell }(t-s)^{2}dtds\int_{\mathbb{S}^{2}\times \mathbb{S}^{2}}P_{\ell }(\langle x,y\rangle )^{2}dxdy \nonumber \\
&=\frac{\sigma _{1}^{2}\pi }{4}\phi ^{2}(u)\sum_{\ell =0}^{+\infty}(2\ell
+1)^{2}\left\{ (u^{2}-1)+\frac{\lambda _{\ell }/2}{\sigma
_{1}^{2}}\right\} ^{2}\int_{[0,T]^{2}}C_{\ell }(t-s)^{2}\,dt\,ds,
\end{flalign}
recalling that $\int_{\mathbb{S}^{2}\times \mathbb{S}^{2}}P_{\ell }(\langle
x,y\rangle )^{2}dxdy = (4\pi)^2/(2\ell+1)$. In view of (\ref{varlunga2}), we
will need the following result.

\begin{lemma}[Lemma 4.3 in \protect\cite{MRV:20}]
\label{lemma-var2nd} Fix $\ell \in \widetilde{ \mathbb{N}}$. If $2\beta
_{\ell}<1$, then%
\begin{equation*}
\lim_{T\rightarrow \infty }\frac{1}{T^{2-2\beta _{\ell }}}%
\int_{[0,T]^2}C_{\ell }^{2}(t-s)dtds=\frac{2C_{\ell }(0)^{2}}{%
(1-\beta_{\ell})(1-2\beta_{\ell})}\text{ .}
\end{equation*}
If $2\beta _{\ell}>1$, then
\begin{equation*}
\lim_{T\rightarrow \infty }\frac{1}{T}\int_{[0,T]^2}C_{\ell
}^{2}(t-s)dtds=\int_{\mathbb{R}}C_\ell(\tau)^2\, d\tau\,.
\end{equation*}
\end{lemma}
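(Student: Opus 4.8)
The plan is to reduce the double integral to a one-dimensional one and then analyse its large-$T$ behaviour separately in the two regimes. Since $C_\ell$ is the covariance function of a stationary process it is even, $C_\ell(-\tau)=C_\ell(\tau)$, so the change of variables $\tau=t-s$ together with Fubini gives the exact identity
\[
\int_{[0,T]^2}C_\ell^2(t-s)\,dt\,ds = \int_{-T}^{T}(T-|\tau|)\,C_\ell^2(\tau)\,d\tau = 2\int_0^T (T-\tau)\,C_\ell^2(\tau)\,d\tau.
\]
Everything then reduces to the asymptotics of $\int_0^T (T-\tau)C_\ell^2(\tau)\,d\tau$ as $T\to\infty$, for which the only input needed is the tail behaviour supplied by Assumption \ref{basic3}: writing $C_\ell(\tau)=G_\ell(\tau)g_{\beta_\ell}(\tau)$ with $G_\ell(\tau)\to C_\ell(0)$ and $g_{\beta_\ell}(\tau)=(1+|\tau|)^{-\beta_\ell}$ (respectively $(1+|\tau|)^{-\alpha}$ if $\beta_\ell=1$), one has $C_\ell^2(\tau)\sim C_\ell(0)^2(1+\tau)^{-2\beta_\ell}$ as $\tau\to+\infty$.

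In the short-memory regime $2\beta_\ell>1$ the function $C_\ell^2$ is integrable on $\mathbb{R}$ (for $\beta_\ell=1$ this uses $\alpha\ge 2$, so that $C_\ell^2\sim C_\ell(0)^2(1+|\tau|)^{-2\alpha}$). I would then divide by $T$ and write
\[
\frac{1}{T}\int_{[0,T]^2}C_\ell^2(t-s)\,dt\,ds = 2\int_0^T\Big(1-\frac{\tau}{T}\Big)C_\ell^2(\tau)\,d\tau,
\]
and pass to the limit by dominated convergence: the integrand is bounded in modulus by $2C_\ell^2(\tau)\in L^1([0,+\infty))$ and converges pointwise to $2C_\ell^2(\tau)$, so the limit equals $2\int_0^{+\infty}C_\ell^2(\tau)\,d\tau=\int_{\mathbb{R}}C_\ell^2(\tau)\,d\tau$. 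This case is essentially immediate once the reduction above is in place.

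The long-memory regime $2\beta_\ell<1$ is the substantive one, since now $C_\ell^2\notin L^1$ and the growth rate is $T^{2-2\beta_\ell}$ with $2-2\beta_\ell>1$. Here I would fix $\varepsilon>0$, use that $G_\ell(\tau)^2\to C_\ell(0)^2$ to choose $M$ so large that $|G_\ell(\tau)^2-C_\ell(0)^2|<\varepsilon$ for $\tau\ge M$, and split $\int_0^T(T-\tau)C_\ell^2(\tau)\,d\tau$ at $M$. The contribution of $[0,M]$ is bounded by $T\int_0^M C_\ell^2(\tau)\,d\tau = O(T)=o(T^{2-2\beta_\ell})$ and is negligible after normalising by $T^{2-2\beta_\ell}$. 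On $[M,T]$ one replaces $C_\ell^2(\tau)$ by $C_\ell(0)^2(1+\tau)^{-2\beta_\ell}$ up to the $\varepsilon$-error controlled by $M$, rescales $\tau=Tv$, and reduces the leading term to the elementary integral
\[
\int_0^1 (1-v)\,v^{-2\beta_\ell}\,dv=\frac{1}{1-2\beta_\ell}-\frac{1}{2-2\beta_\ell};
\]
taking $\limsup$ and $\liminf$ in $T$ and then letting $\varepsilon\to0$ and $M\to\infty$ pins down the constant and, after collecting the numerical factors, yields the claimed right-hand side $\tfrac{2C_\ell(0)^2}{(1-\beta_\ell)(1-2\beta_\ell)}$. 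The only real obstacle is the bookkeeping in this last case: one must check that replacing $(1+\tau)^{-2\beta_\ell}$ by $\tau^{-2\beta_\ell}$, truncating at $M$, and uniformly controlling $G_\ell(\tau)^2$ each contribute only lower-order terms relative to $T^{2-2\beta_\ell}$, so that the exact leading constant survives in the limit. This is precisely the regularly-varying (Tauberian-type) computation already carried out in \cite{MRV:20}, and the present statement is its direct transcription.
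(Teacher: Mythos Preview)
Your argument is correct and is exactly the standard route: reduce to the one-dimensional integral $2\int_0^T(T-\tau)C_\ell^2(\tau)\,d\tau$, apply dominated convergence when $C_\ell^2\in L^1$, and in the non-integrable case use the tail asymptotics $C_\ell^2(\tau)\sim C_\ell(0)^2\tau^{-2\beta_\ell}$ together with the Beta-type integral $\int_0^1(1-v)v^{-2\beta_\ell}\,dv$. The paper does not supply its own proof of this lemma---it is quoted verbatim as Lemma~4.3 of \cite{MRV:20}---and your sketch is precisely the computation carried out there, as you yourself note at the end.
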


\begin{proposition}
\label{prop:var-2nd-chaos} For $2\beta_{\ell^*}<1$ and $\beta_0\le
\beta_{\ell^*}$ we have that
\begin{equation}
\lim_{T\rightarrow \infty }\frac{\mathop{\rm Var}\left( \mathcal{C}%
_{T}(u)[2]\right) }{T^{2-2\beta _{\ell^{\star}}}} =\frac{\sigma _{1}^{2}\pi
}{4}\phi ^{2}(u)\sum_{\ell \in \mathcal{I}^\star}\frac{(2\ell+1)^{2}C_%
\ell(0)^2}{(1-2\beta_\ell)(1-\beta_\ell)}\left\{ (u^{2}-1)+\frac{\lambda
_{\ell}/2}{\sigma_{1}^{2}}\right\}^2\,.
\end{equation}
For $2\beta_{\ell^*} >1$ and $2\beta_0 >1$ we have that
\begin{equation}  \label{T2}
\lim_{T\rightarrow \infty }\frac{\mathop{\rm Var}\left( \mathcal{C}%
_{T}(u)[2]\right) }{T} = \frac{\sigma _{1}^{2}\pi }{4}\phi
^{2}(u)\sum_{\ell=0}^{\infty}(2\ell+1)^{2} \left\{ (u^{2}-1)+\frac{\lambda
_{\ell}/2}{\sigma_{1}^{2}}\right\}^2 \int_{(-\infty, +\infty)}C_{\ell
}^{2}(\tau)d\tau.
\end{equation}
\end{proposition}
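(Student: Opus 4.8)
The plan is to build directly on the exact variance formula already derived in \eqref{varlunga2},
\begin{equation*}
\mathop{\rm Var}\left(\mathcal{C}_T(u)[2]\right)=\frac{\sigma_1^2\pi}{4}\phi^2(u)\sum_{\ell=0}^{+\infty}(2\ell+1)^2\left\{(u^2-1)+\frac{\lambda_\ell/2}{\sigma_1^2}\right\}^2\int_{[0,T]^2}C_\ell(t-s)^2\,dt\,ds,
\end{equation*}
together with Lemma \ref{lemma-var2nd}, which already gives the precise large-$T$ behaviour of the inner double integral $\int_{[0,T]^2}C_\ell(t-s)^2\,dt\,ds$ in each of the two regimes $2\beta_\ell<1$ and $2\beta_\ell>1$. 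Everything then reduces to two tasks: applying Lemma \ref{lemma-var2nd} \emph{term by term} after the correct normalization, and justifying that the limit in $T$ may be exchanged with the infinite sum over $\ell$.

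In the long-memory case I would divide by $T^{2-2\beta_{\ell^\star}}$. For $\ell\in\mathcal{I}^\star$ we have $\beta_\ell=\beta_{\ell^\star}$ with $2\beta_\ell<1$, so the first part of Lemma \ref{lemma-var2nd} sends the $\ell$-th normalized integral to $2C_\ell(0)^2/[(1-\beta_\ell)(1-2\beta_\ell)]$, reproducing exactly the summands appearing on the right-hand side. For $\ell\ge 1$ with $\ell\notin\mathcal{I}^\star$ one has $\beta_\ell>\beta_{\ell^\star}$, and whether the inner integral is of order $T^{2-2\beta_\ell}$ (if $2\beta_\ell<1$) or of order $T$ (if $2\beta_\ell>1$), in both cases the ratio to $T^{2-2\beta_{\ell^\star}}$ tends to $0$, because $2-2\beta_\ell<2-2\beta_{\ell^\star}$ and $1<2-2\beta_{\ell^\star}$. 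The hypothesis relating $\beta_0$ and $\beta_{\ell^\star}$ is precisely what controls the $\ell=0$ term, whose bracket equals $(u^2-1)$ since $\lambda_0=0$: it guarantees that this term does not grow faster than $T^{2-2\beta_{\ell^\star}}$, so it either joins the limiting sum (exactly when $0\in\mathcal{I}^\star$) or is negligible. The short-range case is structurally identical: under $2\beta_\ell>1$ for all $\ell$ one divides by $T$ and uses the second part of Lemma \ref{lemma-var2nd} to send each term to $(2\ell+1)^2\{(u^2-1)+\lambda_\ell/(2\sigma_1^2)\}^2\int_{\mathbb{R}}C_\ell(\tau)^2\,d\tau$.

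The genuinely non-routine step --- and the main obstacle --- is the interchange of the limit with the infinite summation; I would settle it by dominated convergence on the counting measure, producing a summable, $T$-independent majorant for the normalized summands. Assumption \ref{basic3} furnishes the uniform pointwise bound $|C_\ell(\tau)|\le c\,C_\ell(0)(1+|\tau|)^{-\beta_\ell}$, and since $\beta_\ell\ge\beta_{\ell^\star}$ for $\ell\ge 1$ we may replace $\beta_\ell$ by $\beta_{\ell^\star}$ in the exponent. Writing $\int_{[0,T]^2}C_\ell(t-s)^2\,dt\,ds=\int_{-T}^{T}(T-|\tau|)C_\ell(\tau)^2\,d\tau\le T\int_{-T}^{T}C_\ell(\tau)^2\,d\tau$ and integrating the majorant gives $\int_{[0,T]^2}C_\ell(t-s)^2\,dt\,ds\le c\,C_\ell(0)^2\,T^{2-2\beta_{\ell^\star}}$ in the long-memory regime (and, uniformly in $\ell$, $\int_{-T}^{T}C_\ell(\tau)^2\,d\tau\le c\,C_\ell(0)^2$ in the short-memory regime, using $\beta_\ell\ge\beta_{\ell^\star}>1/2$). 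Hence each normalized summand is bounded by
\begin{equation*}
c\,(2\ell+1)^2\left\{(u^2-1)+\frac{\lambda_\ell/2}{\sigma_1^2}\right\}^2C_\ell(0)^2\le c\,\ell^6\,C_\ell(0)^2=c\,(\ell^3 C_\ell(0))^2,
\end{equation*}
uniformly in $T$. Finally, the convergence condition \eqref{eqConv} gives $\sum_\ell\ell^3 C_\ell(0)<+\infty$, so the sequence $\ell^3 C_\ell(0)$ is bounded and $\sum_\ell(\ell^3 C_\ell(0))^2<+\infty$, which supplies exactly the summable dominating function required. Dominated convergence then legitimizes passing to the limit inside the sum in both regimes, and collecting the surviving terms yields the two claimed identities.
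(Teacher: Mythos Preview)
Your proposal is correct and follows essentially the same route as the paper: start from the exact variance identity \eqref{varlunga2}, apply Lemma~\ref{lemma-var2nd} term by term, and justify the interchange of limit and sum by dominated convergence. The only difference is expository --- where the paper invokes Lemmas~4.7 and~4.8 of \cite{MRV:20} for the domination, you construct the summable majorant $c(\ell^3 C_\ell(0))^2$ explicitly from Assumption~\ref{basic3} and \eqref{eqConv}.
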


\proof
For $2\beta_{\ell^*}<1$ and $\beta_0\le \beta_{\ell^*}$, from Lemma \ref%
{lemma-var2nd}, bearing in mind (\ref{eqConv}), we can use Dominated
Convergence Theorem (as well as Lemmas 4.7 and 4.8 in \cite{MRV:20}) to get
\begin{flalign*}
&\lim_{T\to \infty}\frac{\operatorname{Var}\left(\mathcal{C}_T (u)[2]\right)}{T^{2-2\beta_{\ell^\star}}}\\
&= \lim_{T\to \infty} \frac{\sigma _{1}^{2}\pi }{4}\phi ^{2}(u)\sum_{\ell=0}^{\infty}\frac{T^{2-2\beta_{\ell}}(2\ell+1)^{2}}{T^{2-2\beta _{\ell^\star}}}\left\{ (u^{2}-1)+\frac{\lambda _{\ell}/2}{\sigma_{1}^{2}}\right\}^2 \int_{[0,T]^2}\frac{C_{\ell }^{2}(t-s)}{T^{2-2\beta _{\ell }}}dtds\\
&= \frac{\sigma _{1}^{2}\pi }{4}\phi ^{2}(u)\sum_{\ell=0}^{\infty}\lim_{T\to \infty} \frac{T^{2-2\beta_{\ell}}(2\ell+1)^{2}}{T^{2-2\beta _{\ell^\star}}}\left\{ (u^{2}-1)+\frac{\lambda _{\ell}/2}{\sigma_{1}^{2}}\right\}^2 \frac{C_{\ell }(0)^{2}}{(1-\beta_{\ell})(1-2\beta_{\ell})}dtds\\
&=  \frac{\sigma _{1}^{2}\pi }{4}\phi ^{2}(u)\sum_{\ell \in \mathcal{I}^\star}\frac{(2\ell+1)^{2}C_\ell(0)^2}{(1-2\beta_\ell)(1-\beta_\ell)}\left\{ (u^{2}-1)+\frac{\lambda _{\ell}/2}{\sigma_{1}^{2}}\right\}^2.
\end{flalign*}
The proof of (\ref{T2}) is analogous and hence omitted. \endproof

\subsubsection{Higher-order chaotic projections}

\label{high}

Let us investigate the asymptotic distribution, as $T\to +\infty$, of $%
\mathcal{C}_T(u)$ for $q\ge 3$. In the short memory case, it is trivial to
see that, as $T\to +\infty$,
\begin{equation*}
\text{Var}(\mathcal{C}_T(u)[q]) = O(T).
\end{equation*}
Note that the constants involved in the bound depend on $q$, but they are
uniformly square summable. The terms which are of smaller order are clearly
negligible; it is thus sufficient to establish that the fourth order
cumulants of the non-negligible chaotic components are $o(T^2)$. The proof
of this upper bound is standard and straightforward, following the same
steps as given for instance in \cite{MRV:20}.

The next Proposition refers to the long memory case and shows that all
chaotic components other than the leading one are uniformly negligible, in
the limit $T\to +\infty$. %\textcolor{red}{We have

\begin{proposition}
\label{var-higher-chaoses} For $2\beta_{\ell^*} < \min\lbrace \beta_0,
1\rbrace$, as $T\to +\infty$,
\begin{equation*}
\sum_{q\geq 3}\mathop{\rm Var}(\mathcal{C}_{T}(u)[q])=O\left( T^{2-\frac52
\beta _{\ell ^{\star }}}\right).
\end{equation*}
\end{proposition}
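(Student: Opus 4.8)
The plan is to estimate each $\operatorname{Var}(\mathcal{C}_T(u)[q])$ separately and then sum over $q\ge 3$, exploiting orthogonality of the Wiener chaoses so that $\sum_{q\ge 3}\operatorname{Var}(\mathcal{C}_T(u)[q])=\operatorname{Var}\big(\sum_{q\ge 3}\mathcal{C}_T(u)[q]\big)$. First I would start from the explicit formula \eqref{e:ppS} of Proposition \ref{teoexpS}, which exhibits $\mathcal{C}_T(u)[q]$ as a finite linear combination, over the indices $m,k$, of the space-time integrals $\int_0^T\int_{\mathbb{S}^2}H_{q-m}(Z(x,t))H_{k}(\widetilde\partial_{1;x}Z(x,t))H_{m-k}(\widetilde\partial_{2;x}Z(x,t))\,dx\,dt$, whose three Hermite indices sum to $q$. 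Computing $\operatorname{Var}(\mathcal{C}_T(u)[q])$ then amounts to evaluating (co)variances of such integrals.

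Next, I would apply the diagram (Wick) formula for Hermite polynomials of the jointly Gaussian vector $(Z,\widetilde\partial_{1;x}Z,\widetilde\partial_{2;x}Z)$ evaluated at the two space-time points $(x,t)$ and $(y,s)$. Each (co)variance becomes a finite sum, over complete matchings of the $q$ legs at one point with the $q$ legs at the other, of integrals of the form $\int_{[0,T]^2}\int_{(\mathbb{S}^2)^2}\prod_{j=1}^q R_j(\langle x,y\rangle,t-s)\,dx\,dy\,dt\,ds$, where each factor $R_j$ is one of the entries of the covariance matrix of $(Z,\nabla Z)$ at the two points, as computed in Lemma \ref{lemcov}. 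By \eqref{eq2}--\eqref{eqDer}, every such entry admits a multipole expansion whose \emph{time} dependence enters only through the factors $C_\ell(\tau)$ (with $\tau=t-s$), the spatial part being a Legendre polynomial or one of its first two derivatives. Expanding the product over $(\ell_1,\dots,\ell_q)$ and integrating over $(\mathbb{S}^2)^2$, isotropy together with the Gaunt-type orthogonality relations for products of Legendre polynomials collapses the angular integral to a constant $K(\ell_1,\dots,\ell_q)$; weighted by the spectral coefficients, these constants are summable thanks to \eqref{eqConv}, exactly as in the spatial bookkeeping of \cite{MRV:20}. One is thus reduced to a convergent sum of terms $K(\ell_1,\dots,\ell_q)\int_{[0,T]^2}\prod_{j=1}^q C_{\ell_j}(t-s)\,dt\,ds$.

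The heart of the argument is the time estimate, and this is where the exponent $\tfrac52\beta_{\ell^\star}$ arises. For $q\ge 3$ I would keep two covariance factors intact and extract half a power of decay from a third: using $|C_\ell(\tau)|\le C_\ell(0)^{1/2}|C_\ell(\tau)|^{1/2}$ together with $|C_\ell(\tau)|\le c\,C_\ell(0)(1+|\tau|)^{-\beta_\ell}\le c\,C_\ell(0)(1+|\tau|)^{-\beta_{\ell^\star}}$ (which follows from Assumption \ref{basic3} and $\beta_\ell\ge\beta_{\ell^\star}$), and bounding the remaining $q-3$ factors by $C_{\ell_j}(0)$, the integrand is controlled by $c\,\big(\prod_{j=1}^q C_{\ell_j}(0)\big)(1+|\tau|)^{-\frac52\beta_{\ell^\star}}$. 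Since $\int_{[0,T]^2}(1+|t-s|)^{-\frac52\beta_{\ell^\star}}\,dt\,ds=O(T^{2-\frac52\beta_{\ell^\star}})$ (and $O(T)$ when $\frac52\beta_{\ell^\star}>1$, which is still negligible with respect to the second-chaos variance $T^{2-2\beta_{\ell^\star}}$), this yields the desired order for each matching, the borderline cases being handled by the elementary computation behind Lemma \ref{lemma-var2nd}. Finally I would sum over $q\ge3$: the number of matchings grows like $q!$, but this is offset by the weights $1/\big(k!(m-k)!(q-m)!\big)$ in \eqref{e:ppS} and by the factorials produced by the Gaussian expectations $\mathbb{E}[H_aH_a]=a!$, together with the boundedness of the coefficients $\alpha$ and $\beta_q(u)$, so that the series over $q$ converges uniformly in $T$.

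The hard part will be the spatial/combinatorial bookkeeping of the two preceding steps: controlling the angular integrals $\int_{(\mathbb{S}^2)^2}\prod_j(\cdot)$ uniformly over all chaos orders $q$, all matchings, and all multipole tuples, and proving that the weighted multipole sums $\sum K(\ell_1,\dots,\ell_q)\prod_j C_{\ell_j}(0)$ converge and, after summation over $q$, remain bounded uniformly in $T$. The pure time estimate is comparatively routine once the product structure has been isolated; the genuine work, following the (here more involved, because of the spatial derivatives and of the arbitrary chaos order) scheme of \cite{MRV:20}, lies in this uniform control of the spherical integrals and of the combinatorics over $q$.
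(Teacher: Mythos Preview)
Your time estimate and the paper's are genuinely different, and the difference matters for the summation over $q$.

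You propose to keep two covariance factors and half of a third, bounding the remaining $q-3$ by $C_{\ell_j}(0)$; this yields $\int_{[0,T]^2}(1+|t-s|)^{-\frac52\beta_{\ell^\star}}\,dt\,ds=O(T^{2-\frac52\beta_{\ell^\star}})$ directly, with a prefactor of the type $\big(\sum_\ell \tfrac{2\ell+1}{4\pi}C_\ell(0)\ell^2\big)^q$. The paper instead keeps \emph{all} $q$ factors in the time integral, obtaining $T^{2-q\beta_{\ell^\star}}$, and then writes
\[
T^{2-q\beta_{\ell^\star}}=T^{2-\frac52\beta_{\ell^\star}}\cdot T^{-(q-\frac52)\beta_{\ell^\star}},
\]
absorbing the second factor into the multipole sum so that it becomes $\big(\sum_\ell \tfrac{2\ell+1}{4\pi}C_\ell(0)\ell^2\,T^{-(1-\frac{5}{2q})\beta_{\ell^\star}}\big)^q<\varepsilon^q$ for $T$ large. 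After Cauchy--Schwarz and the multinomial identity $\sum_{i_1+i_2+i_3=q}\frac{q!}{i_1!i_2!i_3!}=3^q$, the paper's $q$-sum is controlled by $\sum_{q\ge3}K_1(q)\,(3\varepsilon)^q<\infty$ for $\varepsilon<1/3$.

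This is where your argument has a gap. With your time estimate the prefactor is $T$-independent, so after the same combinatorial bookkeeping (the $q!$ matchings and the reciprocal factorials from \eqref{e:ppS} combine, exactly as in the paper, into a factor $3^q$) you are left with $\sum_{q\ge3}K_1(q)\,(3\sigma)^q$ where $\sigma=\sum_\ell \tfrac{2\ell+1}{4\pi}C_\ell(0)\ell^2$. There is no reason for $3\sigma<1$; in fact $\sigma$ is essentially $2\sigma_1^2$ by \eqref{eqConv}--\eqref{sigma1} and can be arbitrarily large. Your claim that the $q!$ is ``offset by the weights $1/\big(k!(m-k)!(q-m)!\big)$'' only brings the growth down to $3^q$, not to something summable. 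To make your route work you would need either an extra source of geometric decay in $q$ (which the Gaunt-type spherical integrals do not provide for products of more than two Legendre factors) or to reinstate the $T$-dependent smallness as the paper does.

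A secondary remark: the paper does not rely on any Gaunt orthogonality for the angular integrals at order $q\ge3$; it uses the crude $L^\infty$ bounds $|P'_\ell|\le c\,\ell^2$, $|P''_\ell|\le c\,\ell^2$ together with the summability \eqref{eqConv}, which is precisely what produces the constant $\sigma$ above. Your plan to ``collapse the angular integral'' via orthogonality would only work for $q=2$; for higher $q$ you are integrating a product of many Legendre polynomials (and derivatives), and no simple orthogonality relation applies.
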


\proof
We have
\begin{flalign*}
&\sum_{q\geq 3}\operatorname{Var}(\mathcal{C}_{T}(u)[q]) \\
=&\sigma _{1}^{2}\sum_{q\geq 3}\operatorname{Var}\left(
\sum_{m=0}^{q}\sum_{k=0}^{m}\frac{\alpha _{k,m-k}\beta _{q-m}(u)}{%
k!(m-k)!(q-m)!}	\right.\\
&\qquad\qquad\qquad\left.\int_{0}^{T}\int_{\mathbb{S}^{2}}H_{q-m}(Z(x,t))H_{k}(%
\widetilde{\partial }_{1,x}Z(x,t))H_{m-k}(\widetilde{\partial }%
_{2,x}Z(x,t))dxdt\right) \\
=&\sigma _{1}^{2}\sum_{q\geq
3}\sum_{m_{1}=0}^{q}\sum_{k_{1}=0}^{m_{1}}\sum_{m_{2}=0}^{q}%
\sum_{k_{2}=0}^{m_{2}}\frac{\alpha _{k_{1},m_{1}-k_{1}}\beta _{q-m_{1}}(u)}{%
k_{1}!(m_{1}-k_{1})!(q-m_{1})!}\frac{\alpha _{k_{2},m_{2}-k_{2}}\beta
_{q-m_{2}}(u)}{k_{2}!(m_{2}-k_{2})!(q-m_{2})!} \\
& \int_{\lbrack 0,T]^{2}}\int_{\mathbb{S}^{2}\times \mathbb{S}^{2}}%
\mathbb{E}\bigg[H_{q-m_{1}}(Z(x,t))H_{k_{1}}(\widetilde{\partial }%
_{1,x}Z(x,t))H_{m_{1}-k_{1}}(\widetilde{\partial }_{2,x}Z(x,t)) \\
&\qquad \qquad \qquad \qquad  H_{q-m_{2}}(Z(y,s))H_{k_{2}}(\widetilde{%
\partial }_{1,x}Z(y,s))H_{m_{2}-k_{2}}(\widetilde{\partial }_{2,x}Z(y,s))%
\bigg] dxdydtds.
\end{flalign*}
Hence we can write
\begin{flalign*}
\sum_{q\geq 3}\operatorname{Var}(\mathcal{C}_{T}(u)[q]) \leq &\sigma _{1}^{2}\sum_{q\geq
3}\sum_{i_{1}+i_{2}+i_{3}=q}\sum_{j_{1}+j_{2}+j_{3}=q}\frac{|\alpha
_{i_{1},i_{2}}\beta _{i_{3}}(u)|}{i_{1}!i_{2}!i_{3}!}\frac{|\alpha
_{j_{1},j_{2}}\beta _{j_{3}}(u)|}{j_{1}!j_{2}!j_{3}!}%
\,U_{q}(i_{1},i_{2},i_{3},j_{1},j_{2},j_{3})\,,
\end{flalign*}where $U_{q}(i_{1},i_{2},i_{3},j_{1},j_{2},j_{3})$ is a sum of
at most $q!$ terms of the type
\begin{equation}
\int_{\lbrack 0,T]^{2}}\int_{\mathbb{S}^{2}\times \mathbb{S}%
^{2}}\prod_{u=1}^{q}\mathbb{E}\left[ \widetilde{\partial }_{l_{u},x}Z(x,t)%
\widetilde{\partial }_{h_{u},x}Z(y,s)\right] \,dx\,dy\,dt\,ds\,,
\end{equation}%
where $l_{u},h_{u}\in \{0,1,2\}$ and by $\widetilde{\partial }%
_{l_{u},x}Z(x,t)$ we denote the normalized partial derivatives with respect
to the first or second variable (in our convention, if $l_u=0$ then $%
\widetilde{\partial }_{0,x}Z(x,t)=Z(x,t)$). In particular,
\begin{flalign*}
&|U_{q}(i_{1},i_{2},i_{3},j_{1},j_{2},j_{3})|\leq q!\,(4\pi
)^{2}\,\sum_{\ell _{1},\dots ,\ell _{q}=0}^{\infty }\frac{(2\ell _{1}+1)\ell
_{1}^{2}}{4\pi}\frac{(2\ell _{2}+1)\ell _{2}^{2}}{4\pi}\cdots \frac{(2\ell _{q}+1)\ell _{q}^{2}}{4\pi} \\
&\qquad\qquad\qquad\frac{C_{\ell_1}(0)C_{\ell_2}(0)\cdots C_{\ell_q}(0)\,T^{2-(\beta_{\ell_1}+\beta_{\ell_2}+\cdots+\beta_{%
\ell_q})}}{(1-\beta_{\ell_1}-\beta_{\ell_2}-\cdots-\beta_{\ell_q})(2-\beta_{%
\ell_1}-\beta_{\ell_2}-\cdots-\beta_{\ell_q})}+O(T)\\
&\leq \frac{q!\,(4\pi )^{2}\,T^{2-q\beta _{\ell ^{\star }}}}{%
\min_{q}\{(1-\beta _{\ell _{1}}-\beta _{\ell _{2}}-\cdots -\beta _{\ell
_{q}})(2-\beta _{\ell _{1}}-\beta _{\ell _{2}}-\cdots -\beta _{\ell _{q}})\}}%
\,\left( \sum_{\ell =0}^{\infty }\frac{C_\ell(0)(2\ell +1)\ell ^{2}}{4\pi}\right) ^{q}\,.
\end{flalign*}
Indeed, let us investigate the behavior of one of these terms:
\begin{flalign*}
& \int_{[0,T]^2}\int_{\mathbb{S}^2 \times \mathbb{S}^2}
\mathbb{E}\left[\partial_{2,x} Z(x,t)\partial_{2,y}
Z(y,s) \right]^2 \mathbb{E}\left[Z(x,t)\partial_{2,y} Z(y,s) %
\right]^{q-2}\,dx\,dy\,dt\,ds \\
& =\int_{[0,T]^2}\int_{\mathbb{S}^2 \times \mathbb{S}%
^2} \left(\sum_{\ell_1=0}^\infty C_{\ell_1}(t-s)\frac{(2\ell_1+1)}{4\pi}%
\partial_{2;x}\partial_{2;y}P_{\ell_1}(\langle x,y \rangle) \right)^2
\,\times \\
&\qquad\qquad \times\, \left(\sum_{\ell_3=0}^\infty C_{\ell_3}(t-s)\frac{%
(2\ell_3+1)}{4\pi}\partial_{2;y}P_{\ell_3}(\langle x,y \rangle) \right)^{q-2}
\\
& = \int_{[0,T]^2}\int_{\mathbb{S}^2 \times \mathbb{S}%
^2} \sum_{\ell_1,\ell_2=0}^\infty C_{\ell_1}(t-s)C_{\ell_2}(t-s)\frac{%
(2\ell_1+1)}{4\pi}\frac{(2\ell_2+1)}{4\pi}\\
&\qquad\qquad\qquad  \partial_{2;x}\partial_{2;y}P_{%
\ell_1}(\langle x,y \rangle)\partial_{2;x}\partial_{2;y}P_{\ell_2}(\langle
x,y \rangle) \\
&\qquad\qquad \sum_{\ell_3,\dots,\ell_q=0}^\infty C_{\ell_3}(t-s)\cdots
C_{\ell_q}(t-s)\frac{(2\ell_3+1)}{4\pi}\cdots\frac{(2\ell_q+1)}{4\pi}\\
&\qquad\qquad\qquad
\partial_{2;y}P_{\ell_3}(\langle x,y \rangle) \cdots
\partial_{2;y}P_{\ell_q}(\langle x,y \rangle) \\
& \leq(4\pi)^2\sigma _{1}^{2} \int_{[0,T]^2}
\sum_{\ell_1,\ell_2=0}^\infty C_{\ell_1}(t-s)C_{\ell_2}(t-s)\frac{(2\ell_1+1)%
}{4\pi}\frac{(2\ell_2+1)}{4\pi}\,\ell_1^2 \ell_2^2 \\
&\qquad\qquad \sum_{\ell_3,\dots,\ell_q=0}^\infty C_{\ell_3}(t-s)\cdots
C_{\ell_q}(t-s)\frac{(2\ell_3+1)}{4\pi}\cdots\frac{(2\ell_q+1)}{4\pi}\,
\ell_3 \cdots \ell_q \\
& = \sum_{\ell_1,\dots,\ell_q=0}^\infty\frac{%
(2\ell_1+1)}{4\pi}\frac{(2\ell_2+1)}{4\pi}\,\ell_1^2 \ell_2^2 \frac{%
(2\ell_3+1)}{4\pi}\cdots\frac{(2\ell_q+1)}{4\pi}\, \ell_3 \cdots \ell_q
\\
&\qquad \int_{[0,T]^2} C_{\ell_1}(t-s)\cdots C_{\ell_q}(t-s)\,dt\,ds \\
&=  \sum_{\ell_1,\dots,\ell_q=0}^\infty\frac{%
(2\ell_1+1)}{4\pi}\frac{(2\ell_2+1)}{4\pi}\, \frac{(2\ell_3+1)}{4\pi}\cdots%
\frac{(2\ell_q+1)}{4\pi} \ell_1^2 \ell_2^2 \, \ell_3 \cdots \ell_q
 \\
&\qquad\qquad\qquad\frac{C_{\ell_1}(0)C_{\ell_2}(0)\cdots C_{\ell_q}(0)\,T^{2-(\beta_{\ell_1}+\beta_{\ell_2}+\cdots+\beta_{%
\ell_q})}}{(1-\beta_{\ell_1}-\beta_{\ell_2}-\cdots-\beta_{\ell_q})(2-\beta_{%
\ell_1}-\beta_{\ell_2}-\cdots-\beta_{\ell_q})}+O(T)\,,
\end{flalign*}
where for the last equality we used \cite[Lemma 4.11]{MRV:20} and the fact
that, from (\ref{eqConv}), $\sum_{\ell=0}^{\infty }\frac{C_{\ell }(0)(2\ell
+1)\ell^2}{4\pi } <+\infty $. As a consequence,
\begin{eqnarray*}
&&\sum_{q\geq 3}\mathop{\rm Var}(\mathcal{C}_{T}(u)[q]) \\
&\leq &\sigma _{1}^{2}\sum_{q\geq
3}\sum_{i_{1}+i_{2}+i_{3}=q}\sum_{j_{1}+j_{2}+j_{3}=q}\frac{|\alpha
_{i_{1},i_{2}}\beta _{i_{3}}(u)|}{i_{1}!i_{2}!i_{3}!}\frac{|\alpha
_{j_{1},j_{2}}\beta _{j_{3}}(u)|}{j_{1}!j_{2}!j_{3}!} \\
&&\,\frac{q!\,(4\pi )^{2}\,T^{2-q\beta _{\ell ^{\star }}}}{%
\min_{q}\{(1-\beta _{\ell _{1}}-\beta _{\ell _{2}}-\cdots -\beta _{\ell
_{q}})(2-\beta _{\ell _{1}}-\beta _{\ell _{2}}-\cdots -\beta _{\ell _{q}})\}}
\\
&& \qquad\qquad\qquad\left( \sum_{\ell =0}^{\infty }\frac{C_\ell(0)(2\ell
+1)\ell ^{2}}{4\pi}\right) ^{q} \\
&=&\sigma _{1}^{2}\sum_{q\geq 3}\frac{q!\,(4\pi )^{2}\,T^{2-q\beta _{\ell
^{\star }}}}{\min_{q}\{(1-\beta _{\ell _{1}}-\beta _{\ell _{2}}-\cdots
-\beta _{\ell _{q}})(2-\beta _{\ell _{1}}-\beta _{\ell _{2}}-\cdots -\beta
_{\ell _{q}})\}} \\
&&\qquad \left( \sum_{i_{1}+i_{2}+i_{3}=q}\frac{|\alpha _{i_{1},i_{2}}\beta
_{i_{3}}(u)|}{i_{1}!i_{2}!i_{3}!}\left( \sum_{\ell =0}^{\infty }\frac{%
C_\ell(0)(2\ell +1)\ell ^{2}}{4\pi}\right) ^{\frac{i_{1}+i_{2}+i_{3}}{2}%
}\right) ^{2} \\
&=&\frac{\sigma _{1}^{2}\,(4\pi )^{2}\,T^{2-\frac52\beta _{\ell ^{\star }}}}{%
\min_{q}\{(1-\beta _{\ell _{1}}-\beta _{\ell _{2}}-\cdots -\beta _{\ell
_{q}})(2-\beta _{\ell _{1}}-\beta _{\ell _{2}}-\cdots -\beta _{\ell _{q}})\}}
\\
&&\qquad \sum_{q\geq 3}q!\,\left( \sum_{i_{1}+i_{2}+i_{3}=q}\frac{|\alpha
_{i_{1},i_{2}}\beta _{i_{3}}(u)|}{i_{1}!i_{2}!i_{3}!}\left( \sum_{\ell
=0}^{\infty }\frac{C_\ell(0)(2\ell +1)\ell ^{2}}{4\pi \,
T^{\left(1-\frac5{2q}\right)\beta_{\ell^{\star}}}}\right)^{ \frac{%
i_{1}+i_{2}+i_{3}}{2}}\right) ^{2}\,.
\end{eqnarray*}%
So that for each $\varepsilon>0$ there exists $T_\varepsilon>0$ such that
\begin{eqnarray*}
\left( \sum_{\ell =0}^{\infty }\frac{C_\ell(0)(2\ell +1)\ell ^{2}}{4\pi \,
T^{\left(1-\frac5{2q}\right)\beta_{\ell^{\star}}}}\right) ^{\frac{%
i_{1}+i_{2}+i_{3}}{2}} &=&\left( \sum_{\ell =0}^{\infty }\frac{%
C_\ell(0)\,(2\ell +1)\ell ^{2}}{4\pi \,
T^{\left(1-\frac5{2q}\right)\beta_{\ell^{\star}}}}\right) ^{q/2}<
\varepsilon^{q/2} \,,
\end{eqnarray*}%
for each $T \ge T_\varepsilon$. Hence
\begin{equation*}
\sum_{q\geq 3}q!\left( \sum_{i_{1}+i_{2}+i_{3}=q}\frac{|\alpha
_{i_{1},i_{2}}\beta _{i_{3}}(u)|}{i_{1}!i_{2}!i_{3}!}\left( \sum_{\ell
=0}^{\infty }\frac{C_\ell(0)\,(2\ell +1)\ell ^{2}}{4\pi \,
T^{\left(1-\frac5{2q}\right)\beta_{\ell^{\star}}}}\right) ^{\frac{%
i_{1}+i_{2}+i_{3}}{2}}\right) ^{2}
\end{equation*}%
\begin{flalign*}
&=\sum_{q\geq 3}q!\sum_{i_{1}+i_{2}+i_{3}=q}\sum_{j_{1}+j_{2}+j_{3}=q}\frac{%
|\alpha _{i_{1},i_{2}}\beta _{i_{3}}(u)|}{i_{1}!i_{2}!i_{3}!}\frac{|\alpha
_{j_{1},j_{2}}\beta _{j_{3}}(u)|}{j_{1}!j_{2}!j_{3}!}\\
&\qquad\qquad\qquad\times\left( \sum_{\ell
=0}^{\infty }\frac{C_\ell(0)\,(2\ell +1)\ell ^{2}}{4\pi \, T^{\left(1-\frac5{2q}\right)\beta_{\ell^{\star}}}}\right) ^{q}
\\
&\leq \sum_{q\geq 3}q!\sum_{i_{1}+i_{2}+i_{3}=q}\sum_{j_{1}+j_{2}+j_{3}=q}%
\frac{|\alpha _{i_{1},i_{2}}\beta _{i_{3}}(u)|}{i_{1}!i_{2}!i_{3}!}\frac{%
|\alpha _{j_{1},j_{2}}\beta _{j_{3}}(u)|}{j_{1}!j_{2}!j_{3}!}\varepsilon
^{q}.
\end{flalign*}Now, arguing as in \cite[Section 6.2.2]{DNPR:16}, we have that
the previous quantity is equal to%
\begin{eqnarray*}
&&\sum_{q\geq 3}q!\sum_{i_{1}+i_{2}+i_{3}=q}\sum_{j_{1}+j_{2}+j_{3}=q}\frac{%
|\alpha _{i_{1},i_{2}}\beta _{i_{3}}(u)|}{i_{1}!i_{2}!i_{3}!}\frac{|\alpha
_{j_{1},j_{2}}\beta _{j_{3}}(u)|}{j_{1}!j_{2}!j_{3}!}\varepsilon ^{q} \\
&=&\sum_{q\geq 3}\varepsilon
^{q}\sum_{i_{1}+i_{2}+i_{3}=q}\sum_{j_{1}+j_{2}+j_{3}=q}\sqrt{%
(i_{1}+i_{2}+i_{3})!}\sqrt{(j_{1}+j_{2}+j_{3})!} \\
&&\qquad\qquad\qquad\times \frac{|\alpha _{i_{1},i_{2}}\beta _{i_{3}}(u)|}{%
i_{1}!i_{2}!i_{3}!}\frac{|\alpha _{j_{1},j_{2}}\beta _{j_{3}}(u)|}{%
j_{1}!j_{2}!j_{3}!} \\
&=&\sum_{q\geq 3}\varepsilon
^{q}\sum_{i_{1}+i_{2}+i_{3}=q}\sum_{j_{1}+j_{2}+j_{3}=q}\sqrt{\frac{%
(i_{1}+i_{2}+i_{3})!}{i_{1}!i_{2}!i_{3}!}}\sqrt{\frac{(j_{1}+j_{2}+j_{3})!}{%
j_{1}!j_{2}!j_{3}!}} \\
&&\qquad\qquad\qquad\times\frac{|\alpha _{i_{1},i_{2}}\beta _{i_{3}}(u)|}{%
\sqrt{i_{1}!i_{2}!i_{3}!}}\frac{|\alpha _{j_{1},j_{2}}\beta _{j_{3}}(u)|}{%
\sqrt{j_{1}!j_{2}!j_{3}!}} \\
&\leq &\sum_{q\geq 3}\varepsilon ^{q}\sqrt{\sum_{i_{1}+i_{2}+i_{3}=q}%
\sum_{j_{1}+j_{2}+j_{3}=q}\frac{(i_{1}+i_{2}+i_{3})!}{i_{1}!i_{2}!i_{3}!}%
\frac{(j_{1}+j_{2}+j_{3})!}{j_{1}!j_{2}!j_{3}!}} \\
&&\quad \times \sqrt{\sum_{i_{1}+i_{2}+i_{3}=q}\sum_{j_{1}+j_{2}+j_{3}=q}%
\frac{|\alpha _{i_{1},i_{2}}\beta _{i_{3}}(u)|^{2}}{i_{1}!i_{2}!i_{3}!}\frac{%
|\alpha _{j_{1},j_{2}}\beta _{j_{3}}(u)|^{2}}{j_{1}!j_{2}!j_{3}!}} \\
&\leq &\sum_{q\geq 3}K_{1}(q)\varepsilon ^{q}\sqrt{%
\sum_{i_{1}+i_{2}+i_{3}=q}\sum_{j_{1}+j_{2}+j_{3}=q}\frac{%
(i_{1}+i_{2}+i_{3})!}{i_{1}!i_{2}!i_{3}!}\frac{(j_{1}+j_{2}+j_{3})!}{%
j_{1}!j_{2}!j_{3}!}}
\end{eqnarray*}%
where%
\begin{equation*}
K_{1}(q)=\sum_{i_{1}+i_{2}+i_{3}=q}\frac{|\alpha _{i_{1},i_{2}}\beta
_{i_{3}}(u)|^{2}}{i_{1}!i_{2}!i_{3}!}\leq const,\text{ }
\end{equation*}%
uniformly in $q$ note that this is the variance of the $q-$th order chaos
for the expansion of the boundary length for a unit variance spherical
random field. On the other hand,
\begin{eqnarray*}
&&\sqrt{\sum_{i_{1}+i_{2}+i_{3}=q}\sum_{j_{1}+j_{2}+j_{3}=q}\frac{%
(i_{1}+i_{2}+i_{3})!}{i_{1}!i_{2}!i_{3}!}\frac{(j_{1}+j_{2}+j_{3})!}{%
j_{1}!j_{2}!j_{3}!}} \\
&&=\sum_{i_{1}+i_{2}+i_{3}=q}\frac{(i_{1}+i_{2}+i_{3})!}{i_{1}!i_{2}!i_{3}!}%
=3^{q},
\end{eqnarray*}
whence%
\begin{eqnarray*}
&&\sum_{q\geq 3}K_{1}(q)\varepsilon ^{q}\sqrt{\sum_{i_{1}+i_{2}+i_{3}=q}%
\sum_{j_{1}+j_{2}+j_{3}=q}\frac{(i_{1}+i_{2}+i_{3})!}{i_{1}!i_{2}!i_{3}!}%
\frac{(j_{1}+j_{2}+j_{3})!}{j_{1}!j_{2}!j_{3}!}} \\
&=&\sum_{q\geq 3}K_{1}(q)\varepsilon ^{q}3^{q}<\infty \text{ ,}
\end{eqnarray*}%
since one can choose $\varepsilon <\frac{1}{3}$. Consequently, we just
proved that
\begin{equation}
\sum_{q\geq 3}\mathop{\rm Var}(\mathcal{C}_{T}(u)[q])=O\left( T^{2-\frac52
\beta _{\ell ^{\star }}}\right)=o\left(T^{2-2 \beta _{\ell ^{\star }}}\right)
\label{var-high}
\end{equation}
thus concluding the proof. \endproof

\subsection{Proof of Theorem \protect\ref{main-theorem}}

We will need the following well known result.

\begin{theorem}[\protect\cite{DM79, Ta:79}]
\label{DMtheorem} Let $\xi(t)$, $t\in\mathbb{R}$, be a real measurable
mean-square continuous stationary Gaussian process with mean $\mathbb{E}%
\left[\xi(t)\right]$ and covariance function $\rho(t-s)=\rho(|t-s|)=%
\mathop{\rm Cov}(\xi(t),\xi(s))$. Moreover, assume that
\begin{equation}  \label{eq:DM79}
\rho(t-s)=\frac{L(|t-s|)}{|t-s|^{\beta}}\,, \qquad \text{with} \quad
0<\beta<1,
\end{equation}
where $L$ is a slowly varying function. Let $F: \mathbb{R} \rightarrow
\mathbb{R}$ be a Borel function such that $\mathbb{E}\left[F(N)^2\right]%
<+\infty$, where $N$ is a standard Gaussian random variable. Then it is a
well known fact that can be expanded as follows
\begin{equation*}
F(\xi)=\sum_{k=0}^\infty \frac{b_k}{k!}H_k(\xi)\,, \quad \text{where} \quad
b_k=\int_{\mathbb{R}}F(\xi)H_k(\xi)\phi(\xi)d\xi\,.
\end{equation*}
Assume there exists an integer $r$, the so-called Hermitian rank, such that $%
b_0=b_1=\cdots=b_{r-1}=0$ and $b_r\ne0$. Then , if $\beta\in(0,1/r)$, we
have that the finite-dimensional distributions of the random process
\begin{equation*}
X_T(s)=\frac{1}{T^{1-\beta r /2}L(T)^{r/2}}\int_{0}^{Ts}\,\left[F(\xi(t))-b_0%
\right] \, dt \,, \qquad 0\le s \le1\,,
\end{equation*}
converge weakly, as $T\rightarrow \infty$, to the ones of the Rosenblatt
process of order $r$, that is
\begin{equation*}
X_{\beta}(s):=\frac{b_r}{r!}\int_{(\mathbb{R}^r)^{\prime }}\,\frac{%
e^{i(\lambda_1+\cdots+\lambda_r)s}-1}{i(\lambda_1+\cdots+\lambda_r)}\frac{%
W(d\lambda_1)\cdots W(d\lambda_r)}{|\lambda_1\cdots\lambda_r|^{(1-\beta)/2}}
\, dt \,, \qquad 0\le s \le1\,,
\end{equation*}
where $W$ is a complex Gaussian white noise.
\end{theorem}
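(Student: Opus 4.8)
The plan is to follow the classical spectral-domain argument of Dobrushin and Major \cite{DM79} (see also Taqqu \cite{Ta:79}); since the statement is quoted essentially verbatim from those references, I only sketch the strategy rather than reproduce it. First I would expand $F(\xi(t)) - b_0 = \sum_{k \geq r} \frac{b_k}{k!} H_k(\xi(t))$, where by hypothesis the Hermitian rank is exactly $r$, so the lowest surviving term is the one of order $r$. The slow decay \eqref{eq:DM79} of $\rho$ forces the spectral measure of $\xi$ to admit a density $f$ that is regularly varying at the origin, with $f(\lambda) \sim c_\beta\, L(1/|\lambda|)\, |\lambda|^{\beta - 1}$ as $\lambda \to 0$; this reduction from the time-domain asymptotics to the frequency-domain singularity is the analytic heart of the matter and is obtained by an Abelian/Tauberian argument. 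Writing $\xi(t) = \int_{\mathbb{R}} e^{it\lambda} \sqrt{f(\lambda)}\, W(d\lambda)$ for a complex Gaussian white noise $W$, each Hermite polynomial admits the multiple Wiener-It\^o representation
\[
H_k(\xi(t)) = \int_{(\mathbb{R}^k)^{\prime}} e^{it(\lambda_1 + \cdots + \lambda_k)} \prod_{j=1}^k \sqrt{f(\lambda_j)}\, W(d\lambda_1) \cdots W(d\lambda_k),
\]
the prime excluding the hyperdiagonals.

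Integrating over $t \in [0, Ts]$ turns the exponential into the kernel $\frac{e^{iTs(\lambda_1+\cdots+\lambda_k)} - 1}{i(\lambda_1+\cdots+\lambda_k)}$. Next I would rescale $\lambda_j \mapsto \lambda_j / T$ in each multiple integral: by self-similarity of $W$ together with the regular variation of $f$, the normalization $T^{-(1 - \beta r/2)} L(T)^{-r/2}$ is precisely tuned so that the $k = r$ term converges, in $L^2(\Omega)$, to the Rosenblatt integral $X_\beta(s)$ displayed in the statement. The key point is that the rescaled integrand $T^{r/2} L(T)^{-1/2} \prod_j \sqrt{f(\lambda_j/T)}$ times the kernel converges pointwise to $\frac{e^{is(\lambda_1+\cdots+\lambda_r)} - 1}{i(\lambda_1 + \cdots + \lambda_r)}\, |\lambda_1 \cdots \lambda_r|^{-(1-\beta)/2}$, and the assumption $\beta \in (0, 1/r)$ is exactly what guarantees that this limiting kernel lies in $L^2((\mathbb{R}^r)^{\prime})$, so that the limit variable is well defined. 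For the terms with $k > r$ one checks instead that the same normalization annihilates them: the variance of $\int_0^T H_k(\xi(t))\,dt$ grows like $T^{2 - k\beta} L(T)^{k}$ when $k\beta<1$ and like $T$ otherwise, which in either case is $o\!\left(T^{2 - r\beta} L(T)^{r}\right)$ for $k > r$, since $r\beta < 1$.

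Finally, to obtain convergence of the \emph{finite-dimensional} distributions I would fix levels $s_1,\dots,s_n$ and an arbitrary linear combination $\sum_i a_i X_T(s_i)$; by the previous step this reduces to the $L^2$-convergence of a single rescaled $r$-fold Wiener-It\^o integral, now with kernel $\sum_i a_i \frac{e^{is_i(\lambda_1+\cdots+\lambda_r)} - 1}{i(\lambda_1+\cdots+\lambda_r)}$, and the Wiener-It\^o isometry promotes $L^2$-convergence of kernels to convergence in distribution of the corresponding integrals. The hard part will be the uniform control needed to pass to the limit inside the multiple integrals: one must exhibit a square-integrable dominating function for the rescaled integrands, uniformly in $T$, by combining the regular variation of $f$ near the origin with its integrability away from it, while simultaneously showing that the excluded hyperdiagonal regions contribute negligibly. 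This domination and diagonal-removal step, rather than the mere identification of the limiting object, is where the technical effort concentrates.
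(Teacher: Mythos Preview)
Your sketch is faithful to the classical Dobrushin--Major spectral argument, but note that the paper does not prove this theorem at all: it is stated as a known result, attributed to \cite{DM79, Ta:79}, and then invoked as a black box in the proof of Theorem~\ref{main-theorem}. So there is nothing to compare against; your outline simply supplies what the paper deliberately omits, and it matches the original references.
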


\begin{proof}[Proof of Theorem \ref{main-theorem}]
Recall that $2\beta_{\ell^\star}<\min(\beta_0,1)$.  From Lemma \ref{lemvar1} we have
$$
\lim_{T\to\infty} \frac{\operatorname{Var}(\mathcal C_T(u)[1])}{T^{2-2\beta_{\ell^\star}}} = 0\,.
$$
Moreover, thanks to Proposition \ref{var-higher-chaoses},
$$
\lim_{T\to\infty} \frac{\sum_{q\ge 3} \operatorname{Var}(\mathcal C_T(u)[q])}{T^{2-2\beta_{\ell^\star}}} = 0\,,
$$
so that, recalling also Proposition \ref{prop:var-2nd-chaos},
\begin{equation}\label{2chaosA}
\frac{\mathcal{C}_T(u)}{T^{1-\beta_{\ell^\star}}}=\frac{\mathcal{C}_T(u)[2]}{T^{1-\beta_{\ell^\star}}}+o_{\mathbb P}(1)\,.
\end{equation}
Moreover, since in $L^2(\Omega)$ we have the following equality (recall Remark \ref{2chaos-hermite})
\begin{flalign*}
&\mathcal{C}_T(u)[2]=\frac{\sigma _{1}}{2}\sqrt{\frac{\pi }{2}}%
\phi (u)\sum_{\ell }  \frac{C_{\ell }(0)(2\ell +1)}{4\pi } \left\{ (u^{2}-1)+\frac{\lambda _{\ell }/2}{%
\sigma _{1}^{2}}\right\} \int_0^T\int_{\mathbb{S}^{2}}H_{2}(\widehat Z_{\ell }(x,t))dxdt\text{,}
\end{flalign*}
it holds that
\begin{flalign}  \label{eq:var2spectral}
\frac{\mathcal{C}_T(u)[2]}{T^{1-\beta_{\ell^\star}}}&=
\sum_{\ell \in \mathcal{I}^\star} \frac{\sigma_1}{2}\sqrt{\frac\pi2} \phi(u) C_\ell(0) \left\{ (u^{2}-1)+\frac{\lambda _{\ell}/2}{\sigma_{1}^{2}}\right\}\notag\\
&\qquad \times \sum_{m=-\ell}^{\ell} \frac{1}{T^{1-\beta_{\ell^\star}}}\int_{0}^T H_2\left(\widehat a_{\ell m}(t)\right)dt
+o_{\mathbb P}(1)\,,
\end{flalign}
where $\hat a_{\ell m}(t):= a_{l m}(t) / \sqrt{C_\ell(0)}$.
Indeed, recalling Proposition \ref{prop:var-2nd-chaos}, we have that
\begin{equation}\label{th21}
\lim_{T\rightarrow \infty }\frac{\operatorname{Var}\left( \mathcal{C}_{T}(u)[2]\right) }{T^{2-2\beta _{\ell^{\star}}}}
=\frac{%
\sigma _{1}^{2}\pi }{4}\phi ^{2}(u)\sum_{\ell \in \mathcal{I}^\star}\frac{%
(2\ell+1)^{2}C_\ell(0)^2}{(1-2\beta_\ell)(1-\beta_\ell)}%
\left\{ (u^{2}-1)+\frac{\lambda _{\ell}/2}{\sigma_{1}^{2}}%
\right\}^2\,.
\end{equation}

and hence that
\begin{flalign*}
&\lim_{T\to\infty}\mathbb E\left [\left(\frac{\mathcal{C}_T(u)[2]}{T^{1-\beta_{\ell^\star}}} -\frac{1}{T^{1-\beta_{\ell^\star}}}\sum_{\ell\in \mathcal{I}^\star}\sum_{m=-\ell}^\ell \frac{J_2(u)}{2} C_\ell(0)\int_0^T H_2(\hat a_{\ell, m}(t))\, dt \right)^2\right ]\\
&=\lim_{T\to\infty}\frac{\sigma _{1}^{2}\pi }{4\,T^{2-2\beta_{\ell^\star}}}\phi ^{2}(u)\sum_{\ell \notin \mathcal{I}^\star}(2\ell
+1)^{2}\left\{ (u^{2}-1)+\frac{\lambda _{\ell }/2}{\sigma
_{1}^{2}}\right\} ^{2}\int_{[0,T]^{2}}C_{\ell }(t-s)^{2}dtds\,.
\end{flalign*}
From \eqref{2chaosA} and \eqref{eq:var2spectral}, in order to understand the asymptotic distribution of $\mathcal C_T(u)$, it suffices to investigate the leading term on the right hand side of \eqref{eq:var2spectral}.
Recall Assumption \ref{basic3}, for $\ell\in \mathcal I^\star$ we have that
$$
C_{\ell}(\tau)= \frac{G_\ell(\tau)}{(1+|\tau|)^{\beta_{\ell^\star}}} \,,
$$
where in particular $G_\ell$ is a slowly varying function.
Hence, setting $\xi(t)=a_{\ell, m}(t)$ in Theorem \ref{DMtheorem}, we automatically have that $\rho=\rho_\ell=C_{\ell}$, $L=L_\ell=G_\ell$ and, as a consequence, that
\begin{equation*}
X_T^{\ell, m}:=\frac{1}{T^{1-\beta_{\ell^\star}}}\int_0^T H_2(\widehat a_{\ell, m}(t))\, dt
\overset{d}{\longrightarrow} \frac{X_{m;\beta_{\ell^\star}}}{a(\beta_{\ell^\star})}\,, \qquad \text{as } T \rightarrow
\infty\,,
\end{equation*}
for all $m=-\ell,\dots,\ell$, where, for each $m$,
$X_{m;\beta_{\ell^\star}}$
is a standard Rosenblatt random variable \eqref{Xbeta} of parameter $\beta_{\ell^\star}$.
Moreover, since the $X_T^{\ell, m}$ are all independent for each $T$ we have that
\begin{flalign*}
&\widetilde{\mathcal{C}}_{T}(u)=\sqrt{\frac{T^{2-2\beta_{\ell^\star}}}{\operatorname{Var} \mathcal{C}_{T}(u)[2]}} \sum_{\ell \in \mathcal{I}^\star} \frac{\sigma_1}{2}\sqrt{\frac\pi2} \phi(u) C_\ell(0) \left\{ (u^{2}-1)+\frac{\lambda _{\ell}/2}{\sigma_{1}^{2}}\right\}\\
&\qquad\qquad\qquad \times \sum_{m=-\ell}^{\ell} \frac{\int_{0}^T H_2\left(\widehat a_{lm}(t)\right)dt}{T^{1-\beta_{\ell^\star}}}+o_{\mathbb{P}}(1)
\end{flalign*}
$$
\mathop{\longrightarrow}^d \left(\frac{\sigma_{1}^{2}\pi }{4}\phi ^{2}(u)\sum_{\ell \in \mathcal{I}^\star}  \frac{(2\ell+1)^{2} C_\ell(0)^2}{(1-2\beta_\ell)(1-\beta_\ell)} \left\{ (u^{2}-1)+\frac{\lambda _{\ell}/2}{\sigma_{1}^{2}}\right\}^2 \right)^{-1/2}
$$
$$
\times\sum_{\ell \in \mathcal{I}^\star} \frac{\sigma_1^2}{2}\sqrt{\frac\pi2} \phi(u) C_\ell(0) \left\{ (u^{2}-1)+\frac{\lambda _{\ell}/2}{\sigma_{1}^{2}}\right\}\sum_{m=-\ell}^{\ell} \frac{X_{m,\beta_{\ell^\star}}}{a(\beta_{\ell^\star})}
$$
$$
\mathop{=}^{d}\sum_{\ell \in \mathcal{I}^\star} \frac{C_\ell(0)}{\sqrt{v^\star}}\left\{ (u^{2}-1)+\frac{\lambda _{\ell}/2}{\sigma_{1}^{2}}\right\}V_{2\ell+1}(1,\dots,1;\beta_{\ell^\star})\,,
$$
where
$$
v^\star=a(\beta_{\ell^\star})^2  \sum_{\ell \in \mathcal{I}^\star}  \frac{2(2\ell+1)^{2} C_\ell(0)^2}{(1-2\beta_\ell)(1-\beta_\ell)} \left\{ (u^{2}-1)+\frac{\lambda _{\ell}/2}{\sigma_{1}^{2}}\right\}^2
$$
and the proof is concluded.
\end{proof}

Some auxiliary results are collected in the four appendixes that follow.

\appendix

\section{Covariance structure}

\label{app-cov}

In this Section we collect technical results on the covariance structure of
the field $(Z, \nabla Z)$.

\begin{lemma}\label{lemcov}
Let $Z$ be a space-time spherical random field satisfying Assumption \ref{basic} and Assumption \ref{regular}. Then for all points $x=(\theta _{x},\varphi _{x}),$ $y=(\theta _{y},\varphi
_{y})\in \mathbb{S}^{2}\setminus \{N,S\}$, the covariance structure of
$(Z, \nabla Z)$ is
\begin{flalign*}
\mathbb{E}\left[ Z(x,t)Z(y,s)\right] = \Gamma(\langle x, y\rangle, t-s),
\end{flalign*}

\begin{equation*}
\mathbb{E}\left[ Z(x,t)\partial _{1;y}Z(y,s)\right]
\end{equation*}%
\begin{equation*}
=\sum_{\ell =0}^{\infty }\frac{2\ell +1}{4\pi }C_{\ell }(t-s)P_{\ell
}^{\prime }(\left\langle x,y\right\rangle )\left\{ -\cos \theta _{x}\sin
\theta _{y}+\sin \theta _{x}\cos \theta _{y}\cos (\varphi _{x}-\varphi
_{y})\right\} \text{ ,}
\end{equation*}

\begin{equation*}
\mathbb{E}\left[ Z(x,t)\partial _{2;y}Z(y,s)\right]
\end{equation*}%
\begin{equation*}
=\sum_{\ell =0}^{\infty }\frac{2\ell +1}{4\pi }C_{\ell }(t-s)P_{\ell
}^{\prime }(\left\langle x,y\right\rangle )\left\{ \sin \theta _{x}\sin
(\varphi _{x}-\varphi _{y})\right\} \text{ ,}
\end{equation*}%
and moreover
\begin{equation*}
\mathbb{E}\left[ \partial _{1;x}Z(x,t)\partial _{1;y}Z(y,s)\right] =
\end{equation*}%
\begin{eqnarray*}
&&\sum_{\ell =0}^{\infty }\frac{2\ell +1}{4\pi }C_{\ell }(t-s)P_{\ell
}^{\prime \prime }(\left\langle x,y\right\rangle )\left\{ -\cos \theta
_{x}\sin \theta _{y}+\sin \theta _{x}\cos \theta _{y}\cos (\varphi
_{x}-\varphi _{y})\right\}  \\
&&\times \left\{ -\sin \theta _{x}\cos \theta _{y}+\cos \theta _{x}\sin
\theta _{y}\cos (\varphi _{x}-\varphi _{y})\right\}  \\
&&+\sum_{\ell =0}^{\infty }\frac{2\ell +1}{4\pi }C_{\ell }(t-s)P_{\ell
}^{\prime }(\left\langle x,y\right\rangle )\left\{ \sin \theta _{x}\sin
\theta _{y}+\cos \theta _{x}\cos \theta _{y}\cos (\varphi _{x}-\varphi
_{y})\right\} \text{ ,}
\end{eqnarray*}%
\begin{equation*}
\mathbb{E}\left[ \partial _{1;x}Z(x,t)\partial _{2;y}Z(y,s)\right] =
\end{equation*}%
\begin{eqnarray*}
&&\sum_{\ell =0}^{\infty }\frac{2\ell +1}{4\pi }C_{\ell }(t-s)P_{\ell
}^{\prime \prime }(\left\langle x,y\right\rangle ) \\
&&\times \left\{ -\sin \theta _{x}\cos \theta _{y}+\cos \theta _{x}\sin
\theta _{y}\cos (\varphi _{x}-\varphi _{y})\right\} \left\{ \sin \theta
_{x}\sin \theta _{y}\sin (\varphi _{x}-\varphi _{y})\right\}  \\
&&+\sum_{\ell =0}^{\infty }\frac{2\ell +1}{4\pi }C_{\ell }(t-s)P_{\ell
}^{\prime }(\left\langle x,y\right\rangle )\cos \theta _{x}\sin \theta
_{y}\sin (\varphi _{x}-\varphi _{y})\text{ },
\end{eqnarray*}%
\begin{equation*}
\mathbb{E}\left[ \partial _{2;x}Z(x,t)\partial _{2;y}Z(y,s)\right] =
\end{equation*}%
\begin{eqnarray*}
&&-\sum_{\ell =0}^{\infty }\frac{2\ell +1}{4\pi }C_{\ell }(t-s)P_{\ell
}^{\prime \prime }(\left\langle x,y\right\rangle )\sin \theta _{x}\sin
\theta _{y}\sin ^{2}(\varphi _{x}-\varphi _{y}) \\
&&+\sum_{\ell =0}^{\infty }\frac{2\ell +1}{4\pi }C_{\ell }(t-s)P_{\ell
}^{\prime }(\left\langle x,y\right\rangle )\sin \theta _{x}\sin \theta
_{y}\cos (\varphi _{x}-\varphi _{y})\text{ }.
\end{eqnarray*}
\end{lemma}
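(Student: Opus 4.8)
The plan is to reduce every entry of the covariance matrix to a spatial derivative of the kernel $\Gamma(\langle x,y\rangle, t-s)$ and then evaluate those derivatives by the chain rule in spherical coordinates. The first identity $\mathbb{E}[Z(x,t)Z(y,s)] = \Gamma(\langle x,y\rangle, t-s)$ is just \eqref{Gamma}. For the remaining entries, recall from \eqref{eqSeries_gradient} that each $\partial_{j;x}Z(\cdot,t)$ is the $L^2(\Omega\times\mathbb{S}^2\times[0,T])$ limit of the term-by-term differentiated Karhunen--Lo\`eve series, a limit which is legitimate under Assumption \ref{regular} thanks to the uniform convergence of \eqref{eqDer} (equivalently \eqref{eqConv}). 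Since covariances pass to the limit along $L^2$-convergent sequences, for $a,b\in\{0,1,2\}$ (with the convention $\partial_{0;x}Z=Z$) one has
\[
\mathbb{E}\big[\partial_{a;x}Z(x,t)\,\partial_{b;y}Z(y,s)\big]=\partial_{a;x}\partial_{b;y}\,\Gamma(\langle x,y\rangle, t-s),
\]
so that everything is reduced to differentiating the spectral series \eqref{eq2}.

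Next I would fix the time arguments and write $u=\langle x,y\rangle=\cos\theta_x\cos\theta_y+\sin\theta_x\sin\theta_y\cos(\varphi_x-\varphi_y)$, so that $\Gamma$ depends on the spatial variables only through $u$. Differentiating \eqref{eq2} term by term (again justified by \eqref{eqDer}) yields
\[
\partial_u\Gamma=\sum_{\ell}\tfrac{2\ell+1}{4\pi}C_\ell(t-s)P_\ell'(u),\qquad \partial_u^2\Gamma=\sum_{\ell}\tfrac{2\ell+1}{4\pi}C_\ell(t-s)P_\ell''(u).
\]
The computation then splits into two routine pieces. For the first-order entries ($a=0$), the chain rule gives $\partial_{b;y}\Gamma=(\partial_u\Gamma)\,\partial_{b;y}u$; computing $\partial u/\partial\theta_y=-\cos\theta_x\sin\theta_y+\sin\theta_x\cos\theta_y\cos(\varphi_x-\varphi_y)$ and $\partial u/\partial\varphi_y=\sin\theta_x\sin\theta_y\sin(\varphi_x-\varphi_y)$, and recalling $\partial_{2;y}=\tfrac{1}{\sin\theta_y}\partial_{\varphi_y}$ (which cancels one factor of $\sin\theta_y$), reproduces exactly the stated expressions for $\mathbb{E}[Z\,\partial_{1;y}Z]$ and $\mathbb{E}[Z\,\partial_{2;y}Z]$.

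For the second-order entries I would apply the chain rule twice, using
\[
\partial_{a;x}\partial_{b;y}\Gamma=(\partial_u^2\Gamma)\,(\partial_{a;x}u)(\partial_{b;y}u)+(\partial_u\Gamma)\,\partial_{a;x}\partial_{b;y}u,
\]
which explains why each of the last three formulas is a sum of a $P_\ell''$-weighted term (the product of two first partials of $u$) and a $P_\ell'$-weighted term (a mixed second partial of $u$). It then remains to record the first partials $\partial u/\partial\theta_x$, $\partial u/\partial\varphi_x$ and the mixed second partials, such as $\partial^2 u/\partial\theta_x\partial\theta_y=\sin\theta_x\sin\theta_y+\cos\theta_x\cos\theta_y\cos(\varphi_x-\varphi_y)$, and to substitute. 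The restriction to $\mathbb{S}^2\setminus\{N,S\}$ is needed precisely because $\partial_{2;\cdot}$ carries the factor $1/\sin\theta$, which is singular at the poles.

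I do not expect a genuine obstacle here: the content is the bookkeeping of the chain rule in spherical coordinates, together with the (standard) justification that mean-square differentiation commutes with taking covariances. The only point requiring real care is the term-by-term differentiation of the series \eqref{eq2}, which must be anchored to the uniform convergence in \eqref{eqDer} and the summability \eqref{eqConv}; once this is in place, the six identities follow by direct substitution.
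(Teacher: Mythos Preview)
Your approach is correct and is exactly the computation the paper has in mind: the authors do not write out the proof but refer to the Appendix of \cite{CM20}, where the identical chain-rule differentiation of $\Gamma(\langle x,y\rangle,\tau)$ in spherical coordinates is carried out (with the sole caveat, noted by the authors themselves, that \cite{CM20} uses $\partial_{\varphi_y}$ rather than $\frac{1}{\sin\theta_y}\partial_{\varphi_y}$). Your justification of the interchange of expectation and differentiation via \eqref{eqDer}--\eqref{eqConv} is the right bookkeeping step, and the explicit partial derivatives of $u=\langle x,y\rangle$ you record are correct.
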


The proof of these results is entirely analogous to the one given in the Appendix of \cite{CM20} and hence omitted; note that in the latter reference the definition of $\partial _{2;y}$ differs by a factor $\frac{1}{\sin \theta_{y}}$, i.e., covariant derivatives are used in the computations.

\section{Measurability issues}

We need to add (at least) the following assumption, which is equivalent to
assume that, for every $t\in \mathbb{R}$, the random field $Z(\cdot, t):%
\mathbb{S}^2\to \mathbb{R}$ is a.s. $\mathcal{C}^1$. Note that the $a.s.$
depends on $t$.

\begin{condition}
\label{assreg} For every $t\in \mathbb{R}$, the function $\mathbb{S}^2\times
\mathbb{S}^2 \ni (x,y) \mapsto \Gamma(\cos d(x,y), t)$ is $\mathcal{C}^2$.
\end{condition}

From now on we assume that $Z$ satisfies Assumption \ref{assreg} and
Assumption \ref{basic}. Let $u\in \mathbb{R}$ be a fixed threshold, for $%
t\in \mathbb{R}$ we consider the level set $Z(\cdot, t)^{-1}(u):=\lbrace
x\in \mathbb{S}^2 : Z(x,t) = u\rbrace$ which is an a.s. $\mathcal{C}^1$
manifold of dimension $1$ ($a.s.$ depends on $t$). Indeed, for every $x\in
Z(\cdot, t)^{-1}(u)$ the covariance of $(Z(x,t), \nabla_x Z(x,t))$ is
non-degenerate, hence Bulinskaya's lemma ensures that there exists $%
\Omega_t\subseteq \Omega$, $\mathbb{P}(\Omega_t)=1$, such that for every $%
\omega\in \Omega_t$, the value $u$ is regular for $Z(\cdot, t)(\omega)$,
i.e.
\begin{equation*}
\nabla_x Z(x, t)(\omega)\ne 0 \text{ for every } x \text{ such that }
Z(x,t)(\omega)=u.
\end{equation*}
Hence, on $\Omega_t$ we can define
\begin{equation*}
\mathcal{L}_u(t):=\text{length}(Z(\cdot, t)^{-1}(u)).
\end{equation*}
By stationarity, the law of $\mathcal{L}_u(t)$ does not depend on $t$, in
particular $\mathbb{E}[\mathcal{L}_u(t)]$ does not depend on $t$, and can be
computed via the Kac-Rice formula \cite[Theorem 6.8]{AW:09} or the Gaussian
Kinematic Formula \cite[Theorem 13.2.1]{adlertaylor} to be
\begin{equation}  \label{meanGKF}
\mathbb{E}[\mathcal{L}_u(t)] = \sigma_1 \cdot 2\pi e^{-u^2/2},
\end{equation}
where $\sigma_1$ is defined as in (\ref{sigma1}). In order to define our
functional of interest and prove that it is a random variable, we need the
following technical result.

\begin{lemma}
\label{lemmeas} For every $T>0$, the set
\begin{eqnarray*}
A^T &:=& \left \lbrace (\omega, t)\in \Omega\times [0,T]: \lbrace x\in
\mathbb{S}^2 : |Z(x,t)(\omega) - u | =0, |\nabla Z(x,t)(\omega) |=0\rbrace
=\emptyset \right \rbrace \\
&=&\lbrace (\omega, t)\in \Omega \times [0,T] : \text{ the value } u \text{
is regular for } Z(\cdot, t)(\omega)\rbrace.
\end{eqnarray*}
is measurable, i.e., $A^T\in \mathcal{F}\otimes \mathcal{B}([0,T])$.
\end{lemma}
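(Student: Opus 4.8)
The plan is to rewrite $A^T$ through the spatial infimum of a jointly measurable function and then to remove the uncountable quantifier over $x\in\mathbb{S}^2$ by exploiting compactness. First I would introduce
\[
g(\omega,x,t):=|Z(x,t)(\omega)-u|+\|\nabla Z(x,t)(\omega)\|,\qquad (\omega,x,t)\in\Omega\times\mathbb{S}^2\times[0,T],
\]
where $\|\nabla Z\|$ denotes the intrinsic norm of the spatial gradient, so that the poles $\{N,S\}$ require no special treatment. By definition of $A^T$ one has
\[
(A^T)^c=\big\{(\omega,t)\in\Omega\times[0,T]:\ \exists\,x\in\mathbb{S}^2\ \text{with}\ g(\omega,x,t)=0\big\}.
\]
By Assumption \ref{basic} the map $(\omega,x,t)\mapsto Z(x,t)(\omega)$ is $\mathcal{F}\otimes\mathcal{B}(\mathbb{S}^2\times\mathbb{R})$-measurable, and since $\nabla Z$ is, wherever it exists, a pointwise limit of the jointly measurable difference quotients of $Z$, it too admits a jointly measurable version; hence $g$ is jointly measurable, and it is continuous in $x$ on the full-measure event (depending on $t$) where $Z(\cdot,t)(\omega)\in\mathcal{C}^1(\mathbb{S}^2)$.

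The key step is to eliminate the existential quantifier. Let $D=\{x_j\}_{j\ge1}$ be a countable dense subset of $\mathbb{S}^2$ and define
\[
h(\omega,t):=\inf_{j\ge1}g(\omega,x_j,t),
\]
which is $\mathcal{F}\otimes\mathcal{B}([0,T])$-measurable as a countable infimum of measurable functions. Fix $t$ and any $\omega$ for which $Z(\cdot,t)(\omega)$ is $\mathcal{C}^1$, so that $g(\omega,\cdot,t)$ is continuous on the compact space $\mathbb{S}^2$. Then density of $D$ gives $h(\omega,t)=\min_{x\in\mathbb{S}^2}g(\omega,x,t)$, and, again by continuity and compactness, $h(\omega,t)=0$ if and only if $g(\omega,x^\star,t)=0$ for some $x^\star$. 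Consequently, on this event $(\omega,t)\in(A^T)^c$ holds exactly when $h(\omega,t)=0$.

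It remains to reconcile this with the definition of $A^T$ off the ($t$-dependent) exceptional events. Let $R:=\{(\omega,t):\ Z(\cdot,t)(\omega)\in\mathcal{C}^1(\mathbb{S}^2)\}$; writing the $\mathcal{C}^1$ property through oscillations of $Z$ and of its difference quotients evaluated on the countable set $D$ shows that $R\in\mathcal{F}\otimes\mathcal{B}([0,T])$, while Assumption \ref{assreg} gives that every section $R_t:=\{\omega:(\omega,t)\in R\}$ has $\mathbb{P}(R_t)=1$. By Tonelli, $(\mathbb{P}\otimes\mathrm{Leb})(R^c)=\int_0^T\mathbb{P}(R_t^c)\,dt=0$. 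On $R$ the previous paragraph gives $(A^T)^c\cap R=\{h=0\}\cap R$, so $(A^T)^c$ differs from the measurable set $\{h=0\}$ only by a subset of the measurable, $(\mathbb{P}\otimes\mathrm{Leb})$-null set $R^c$. Therefore $A^T$ is measurable with respect to the completion of $\mathcal{F}\otimes\mathcal{B}([0,T])$, which is all that is needed for the Fubini argument defining $\mathcal{C}_T(u)$.

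The main obstacle is the elimination of the uncountable existential quantifier ``$\exists\,x\in\mathbb{S}^2$'': this is exactly what compactness of $\mathbb{S}^2$ together with the spatial continuity of $Z$ and $\nabla Z$ buys, reducing the problem to the countable index set $D$. The one genuine subtlety is that Assumption \ref{assreg} only yields continuity on a full-measure event that depends on $t$, rather than on a single joint event; I would address this, as above, by verifying measurability of the regularity event $R$ and invoking Tonelli to discard $R^c$.
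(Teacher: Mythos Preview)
Your approach is essentially the paper's: both fix a countable dense subset $\{x_i\}\subset\mathbb{S}^2$ and use the spatial continuity of $Z$ and $\nabla Z$ to reduce the uncountable quantifier over $x$ to countably many conditions; the paper packages this as $A^T=\bigcup_{n,k}\bigcap_i\big(\{|Z(x_i,t)-u|\ge 1/n\}\cup\{|\nabla Z(x_i,t)|\ge 1/k\}\big)$, which is equivalent to your $\{h>0\}$. You are in fact more explicit than the paper about the $t$-dependent null set where $\mathcal{C}^1$-regularity may fail, at the cost of concluding measurability only in the \emph{completion} of $\mathcal{F}\otimes\mathcal{B}([0,T])$ rather than in the product $\sigma$-field as stated; the paper simply invokes continuity without isolating this event (and under Assumption~\ref{regular} one may take a jointly $\mathcal{C}^1$ modification, making $R=\Omega\times[0,T]$ and rendering your extra step unnecessary).
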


\begin{proof}
Fix $\lbrace x_i\rbrace_{i\in \mathbb N}$ a dense sequence in $\mathbb S^2$; for $n,k\in \mathbb N$, define the set
\begin{equation}
A^T_{n,k}:= \bigcap_{i\in \mathbb N} \left \lbrace (\omega, t)\in \Omega\times [0,T]: |Z(x_i, t, \omega) - u | \ge \frac{1}{n} \right \rbrace \cup \left \lbrace  (\omega, t)\in \Omega\times [0,T] : |\nabla Z(x_i, t, \omega)| \ge \frac{1}{k} \right \rbrace
\end{equation}
which is measurable by construction, i.e. $A^T_{n,k}\in \mathcal F\otimes \mathcal B([0,T])$. We will show that $A^T$ equals 
\begin{equation}
 \bigcup_{k,n\in \mathbb N} A^T_{n,k} =: \widehat A^T,
\end{equation}
so that, in particular, $A^T$ is measurable. Indeed,
\begin{equation}
A^T\subseteq \widehat A^T,
\end{equation}
because if $(\omega,t)\in A^T$, then
\begin{equation}
\inf_{x\in \mathbb S^2: Z(x,t)(\omega)=u} |\nabla Z(x,t)(\omega)| =: l >0
\end{equation}
hence by continuity of $x\mapsto Z(x,t)(\omega)$ and $x\mapsto \nabla Z(x,t)(\omega)$ there exist $\tilde k, \tilde n\in \mathbb N$ s.t.
\begin{equation}
\inf_{x\in \mathbb S^2: |Z(x,t)(\omega) - u| < \frac{1}{\tilde n}} |\nabla Z(x,t)(\omega)| > l/2 > \frac{1}{\tilde k}.
\end{equation}
Thus $(\omega,t)\in A^T_{\tilde n, \tilde k}$. On the other hand, if $(\omega,t)\in \widehat A^T$, then there exist $\tilde n, \tilde k\in \mathbb N$ s.t. $(\omega,t)\in A^T_{\tilde n, \tilde k}$, that is,
\begin{equation}
\inf_{i\in \mathbb N : |Z(x_i, t)(\omega) - u|< 1/\tilde n} |\nabla Z(x_i, t)(\omega) | \ge 1/\tilde k.
\end{equation}
By continuity of $Z$ and $\nabla Z$
\begin{equation}
\inf_{x\in \mathbb S^2 : |Z(x, t)(\omega) - u|< 1/2\tilde n} |\nabla Z(x, t)(\omega) | \ge 1/\tilde k
\end{equation}
hence
\begin{equation}
\inf_{x\in \mathbb S^2 : Z(x, t)(\omega) = u} |\nabla Z(x, t)(\omega) | \ge 1/\tilde k >0
\end{equation}
and $(\omega,t)\in A^T$.
\end{proof}

\begin{lemma}
\label{lemreg} Let $T>0$. There exists $\tilde \Omega_T \subseteq \Omega$, $%
\mathbb{P}(\tilde \Omega_T)=1$, such that for every $\omega\in \tilde
\Omega_T$ there exists $I_T(\omega)\subseteq [0,T]$, $\text{Leb}%
(I_T(\omega))=T$, such that the value $u$ is regular for $Z(\cdot,
t)(\omega) $ for every $t\in I_T(\omega)$.
\end{lemma}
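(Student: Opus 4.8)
The plan is to obtain the statement from the joint measurability of $A^T$ established in Lemma \ref{lemmeas}, combined with Bulinskaya's lemma, through a single application of Tonelli's theorem on the product space $(\Omega\times[0,T],\ \mathcal{F}\otimes\mathcal{B}([0,T]),\ \mathbb{P}\otimes\text{Leb})$. Since $[0,T]$ carries a finite measure and $\mathbb{P}$ is a probability measure, the product measure is finite and Tonelli applies to the nonnegative measurable function $1_{(A^T)^c}$.

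First I would record the fiberwise input. For each fixed $t\in[0,T]$, Bulinskaya's lemma (recalled just above) yields $\Omega_t$ with $\mathbb{P}(\Omega_t)=1$ on which $u$ is regular for $Z(\cdot,t)$; writing $A^T_t:=\{\omega\in\Omega:(\omega,t)\in A^T\}$ for the vertical section, this says precisely that $\mathbb{P}(A^T_t)=1$, and hence $\mathbb{P}((A^T_t)^c)=0$, for every $t\in[0,T]$.

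Next I would integrate in $t$ and then switch the order of integration. Because $A^T\in\mathcal{F}\otimes\mathcal{B}([0,T])$ by Lemma \ref{lemmeas}, its complement is measurable, and Tonelli's theorem gives
\[
(\mathbb{P}\otimes\text{Leb})\big((A^T)^c\big)=\int_{[0,T]}\mathbb{P}\big((A^T_t)^c\big)\,dt=0.
\]
Computing the same product measure by integrating first in $\omega$, and denoting by $A^T_\omega:=\{t\in[0,T]:(\omega,t)\in A^T\}$ the horizontal section, I would obtain
\[
0=\int_\Omega \text{Leb}\big([0,T]\setminus A^T_\omega\big)\,d\mathbb{P}(\omega).
\]
As the integrand is nonnegative with vanishing integral, it is zero for $\mathbb{P}$-almost every $\omega$; that is, there exists $\tilde\Omega_T$ with $\mathbb{P}(\tilde\Omega_T)=1$ such that $\text{Leb}([0,T]\setminus A^T_\omega)=0$, equivalently $\text{Leb}(A^T_\omega)=T$, for all $\omega\in\tilde\Omega_T$. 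Setting $I_T(\omega):=A^T_\omega$ delivers the conclusion, since by the definition of $A^T$ the condition $(\omega,t)\in A^T$ means exactly that $u$ is a regular value of $Z(\cdot,t)(\omega)$.

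I do not expect any genuine obstacle here: the only substantive ingredient, namely the joint $(\omega,t)$-measurability of the event that $u$ is a regular value, has already been secured in Lemma \ref{lemmeas}, and the full-probability statement on each vertical section is exactly Bulinskaya's lemma. The sole point deserving a modicum of care is to invoke Tonelli rather than Fubini, so that no integrability hypothesis is required; this is legitimate because we integrate the nonnegative indicator $1_{(A^T)^c}$ against the finite measure $\mathbb{P}\otimes\text{Leb}$ on $\Omega\times[0,T]$.
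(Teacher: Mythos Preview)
Your proposal is correct and follows essentially the same approach as the paper's proof: both use the joint measurability of $A^T$ from Lemma~\ref{lemmeas}, the fact that $\mathbb{P}(A^T_t)=1$ for every $t$ via Bulinskaya, and a Fubini/Tonelli argument to transfer this to $\text{Leb}(A^T_\omega)=T$ for $\mathbb{P}$-a.e.\ $\omega$. The only cosmetic difference is that you work with the complement $(A^T)^c$ and explicitly invoke Tonelli, whereas the paper integrates $1_{A^T}$ directly and calls it Fubini; these are the same computation.
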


In view of Lemma \ref{lemreg}, let us define $\tilde \Omega:=\cap_{n\in
\mathbb{N}} \tilde \Omega_n$ ($\mathbb{P}(\tilde \Omega)=1$), hence for
every $\omega\in \tilde \Omega$ there exists $I(\omega)\subseteq [0,+\infty)$
($\text{Leb}(I(\omega)^c) = 0$) such that the value $u$ is regular for $%
Z(\cdot, t)(\omega)$ for every $t\in I(\omega)$. On $\tilde \Omega$ we can
define the quantity
\begin{equation}  \label{rap_int}
\mathcal{C}_{T}(u)(\omega):=\,\int_{0}^{T}\,\Big(\mathcal{L}_{u}(t)(\omega)-%
\mathbb{E}[\mathcal{L}_{u}(t)]\Big)\,dt
\end{equation}
for every $T>0$, which is a random variable. 

%We are interested in the
%asymptotic distribution, as $T\to +\infty$, of $\mathcal{C}_T(u)$.\newline
\begin{proof}[Proof of Lemma \ref{lemreg}]
For $T>0$, we consider the measure space $([0,T], \mathcal B([0,T]), \text{Leb}_{[0,T]})$, and define
\begin{eqnarray*}
    A^T &:=& \lbrace (\omega, t)\in \Omega \times [0,T] : \text{ the value } u \text{ is regular for } Z(\cdot, t)(\omega)\rbrace.
\end{eqnarray*}
From Lemma \ref{lemmeas}, $A^T$ is measurable ($A^T \in \mathfrak F\otimes \mathcal B([0,T])$). In particular, the section $$A^T_t = \lbrace \omega\in \Omega  : \text{ the value } u \text{ is regular for } Z(\cdot, t)(\omega)\rbrace$$
is a measurable set and contains $\Omega_t$ ensuring that $\mathbb P(A^T_t)=1$.  A standard application of Fubini's theorem gives
\begin{equation}
  T =  \int_0^T \mathbb P(A^T_t)\,dt = \mathbb E \left [\int_0^T 1_{A^T}(\omega,t)\,dt
    \right ]
\end{equation}
implying that there exists $\tilde \Omega_T\subseteq \Omega$, $\mathbb P(\tilde \Omega_T) =1$, such that for every $\omega\in \tilde \Omega_T$ we have 
$
   \int_0^T 1_{A^T}(\omega,t)\,dt = T.
$
\end{proof}

\section{Square integrability}

\label{app-square}

In this Section first we prove that $\mathcal{C}_T(u)$ is square integrable.
By a standard application of Jensen's inequality and the stationarity of the
model we have
\begin{equation}
\mathbb{E}[\mathcal{C}_T(u)^2] \le T^2 \text{Var}(\mathcal{L}_u( 0))
\end{equation}
for any $T>0$. Hence it suffices to prove that $\mathcal{L}_u(0)$ is square
integrable (clearly, it is equivalent to show that $\mathcal{L}_u(t)$ is so,
for any $t\in\mathbb{R}$).

Recall the definition of $\epsilon$-approximating random variables $\mathcal{%
L}_u^\epsilon(t)$ in (\ref{Leps}).

\begin{lemma}
\label{approx2} As $\epsilon\to 0$,
\begin{equation}  \label{convergence}
\mathcal{L}_u^\epsilon(t) \to \text{length}(Z(\cdot, t)^{-1}(u))=\mathcal{L}%
_u(t)
\end{equation}
both a.s. and in $L^2(\mathbb{P})$.
\end{lemma}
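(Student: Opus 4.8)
The plan is to obtain both convergences from a single coarea-type identity combined with a uniform second-moment estimate of Kac--Rice type; throughout I fix $t$ and abbreviate $f:=Z(\cdot, t)$. First I would handle the a.s. convergence. Restricting to the full-probability event on which $u$ is a regular value of $f$ (Bulinskaya's lemma, \cite[Proposition 1.20]{AW:09}), I would apply the coarea formula to $f$ with the nonnegative weight $x\mapsto \frac{1}{2\epsilon}1_{[u-\epsilon, u+\epsilon]}(f(x))$, which is constant, equal to $\frac{1}{2\epsilon}1_{[u-\epsilon, u+\epsilon]}(v)$, on each level set $f^{-1}(v)$. This rewrites the approximating length as a local average
\[
\mathcal{L}_u^\epsilon(t)=\frac{1}{2\epsilon}\int_{u-\epsilon}^{u+\epsilon}\mathcal{L}_v(t)\,dv,\qquad \mathcal{L}_v(t):=\mathrm{length}(f^{-1}(v)).
\]

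It then suffices to show that $v\mapsto \mathcal{L}_v(t)$ is continuous at $v=u$. Since $u$ is regular and $\mathbb{S}^2$ is compact, there exist $c,\delta>0$ and a neighbourhood $U\supseteq f^{-1}(u)$ with $\|\nabla f\|\ge c$ on $U$ and $f^{-1}([u-\delta, u+\delta])\subseteq U$; hence every $v\in[u-\delta, u+\delta]$ is a regular value, and by the implicit function theorem the curves $f^{-1}(v)$ vary in a $\mathcal{C}^1$ fashion with $v$, so their total length is continuous at $u$. Letting $\epsilon\to 0$ in the displayed average then yields $\mathcal{L}_u^\epsilon(t)\to\mathcal{L}_u(t)$ almost surely.

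For the $L^2$ statement I would invoke the elementary fact that a.s. convergence together with convergence of the second moments to that of the limit implies $L^2$ convergence; it therefore remains to prove $\mathbb{E}[(\mathcal{L}_u^\epsilon(t))^2]\to\mathbb{E}[\mathcal{L}_u(t)^2]<+\infty$. Writing $I_\epsilon:=[u-\epsilon, u+\epsilon]$ and expanding the square,
\[
\mathbb{E}[(\mathcal{L}_u^\epsilon(t))^2]=\int_{\mathbb{S}^2\times\mathbb{S}^2}\frac{1}{4\epsilon^2}\,\mathbb{E}\big[1_{I_\epsilon}(Z(x,t))\,1_{I_\epsilon}(Z(y,t))\,\|\nabla Z(x,t)\|\,\|\nabla Z(y,t)\|\big]\,dx\,dy.
\]
For each fixed $x\ne y$ the Gaussian vector $(Z(x,t), Z(y,t))$ is non-degenerate under Assumption \ref{basic}, so the integrand converges, as $\epsilon\to 0$, to the two-point Kac--Rice density $K_2(x,y)=p_{Z(x,t),Z(y,t)}(u,u)\,\mathbb{E}[\|\nabla Z(x,t)\|\,\|\nabla Z(y,t)\|\mid Z(x,t)=Z(y,t)=u]$.

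The step I expect to be the main obstacle is dominating the family of $\epsilon$-integrands by a single integrable function on $\mathbb{S}^2\times\mathbb{S}^2$, the only delicate region being the diagonal $x=y$, where $(Z(x,t), Z(y,t))$ degenerates. Here I would use Assumption \ref{assreg}: since $\Gamma$ is $\mathcal{C}^2$, a Taylor expansion of the covariance near the diagonal controls the growth of $p_{Z(x,t),Z(y,t)}(u,u)$ as $d(x,y)\to 0$, while the conditional expectation of the gradient norms supplies a compensating factor vanishing with $d(x,y)$; this is exactly a Geman-type condition, and it produces a bound on $K_2(x,y)$ (and uniformly on the $\epsilon$-integrands) that is integrable over the four-dimensional product. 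Dominated convergence then gives $\mathbb{E}[(\mathcal{L}_u^\epsilon(t))^2]\to\int_{\mathbb{S}^2\times\mathbb{S}^2}K_2(x,y)\,dx\,dy$, which is finite and equals $\mathbb{E}[\mathcal{L}_u(t)^2]$ by the Kac--Rice second-moment formula once finiteness is secured (cf. \cite[Theorem 6.9]{AW:09}, and by Fatou applied to the a.s. convergence). Combining this with the almost sure convergence yields $\mathcal{L}_u^\epsilon(t)\to\mathcal{L}_u(t)$ in $L^2(\mathbb{P})$, concluding the argument.
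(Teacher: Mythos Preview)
Your overall strategy matches the paper's: both arguments use the coarea formula to write $\mathcal{L}_u^\epsilon(t)=\tfrac{1}{2\epsilon}\int_{u-\epsilon}^{u+\epsilon}\mathcal{L}_v(t)\,dv$ and deduce a.s.\ convergence from continuity of $v\mapsto\mathcal{L}_v(t)$ at the regular value $u$; both then reduce $L^2$ convergence to showing $\mathbb{E}[(\mathcal{L}_u^\epsilon(t))^2]\to\mathbb{E}[\mathcal{L}_u(t)^2]$ and control the Kac--Rice two-point integrand near the diagonal. The organization of the $L^2$ step differs slightly: you dominate the $\epsilon$-dependent two-point integrand directly and apply DCT on $\mathbb{S}^2\times\mathbb{S}^2$, whereas the paper applies coarea once more to write $\mathbb{E}[(\mathcal{L}_u^\epsilon)^2]=\mathbb{E}\big[(\int\mathcal{L}_v\delta_\epsilon(v)dv)^2\big]$, then sandwiches using Fatou, Cauchy--Schwarz, and continuity of $u\mapsto\mathbb{E}[\mathcal{L}_u^2]$. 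Either route works and requires the same near-diagonal estimate.

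One point to correct: near the diagonal the conditional expectation $\mathbb{E}[\|\nabla Z(x,t)\|\,\|\nabla Z(y,t)\|\mid Z(x,t)=Z(y,t)=u]$ does \emph{not} vanish as $d(x,y)\to0$; it is merely bounded (conditional variances are dominated by unconditional ones, and the conditional means are bounded, as the paper checks explicitly). The compensation for the blow-up $p_{Z(x,t),Z(y,t)}(u,u)\asymp 1/d(x,y)$ comes from the area element, which in geodesic polar coordinates contributes a factor $\sin\theta\asymp\theta$, so that $K_2(x,y)$ times the Jacobian is bounded near the diagonal. With this amendment your domination argument goes through.
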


\begin{proof}
%First of all note that the quantity $\mathcal L_u(t)$ is well defined on $\Omega_t$ whose probability is one (we set $\mathcal L_u(t)=0$ on $\Omega \setminus \Omega_t$). Since on $\Omega_t$ $Z(\cdot, t)$ is non singular w.r.t. level $u$, there is a.s. convergence of $\mathcal L_u^\epsilon(t)$ to $\mathcal L_u(t)$ as $\epsilon \to 0$ (see Lemma 3.1 in \cite{RW08}).\\
The following conditions are satisfied:
\begin{enumerate}
\item \label{punto1} for every fixed $t\in \mathbb R$, the random field $Z(\cdot,t)$ is with probability one a
Morse function on $\mathbb S^2$, see Section \ref{subsec-object} for details;
\item \label{punto2} the covariance function $\Gamma$ of the field is at least twice continuously
differentiable with strictly positive second-order derivative in a
neighburhood of the origin, meaning that%
\begin{equation*}
\sum_{\ell }\ell ^{2}\frac{2\ell +1}{4\pi }C_{\ell }(0)<\infty \text{ .}
\end{equation*}%
Note that this expression is strictly positive unless $C_{\ell }(0)=0$ for all $%
\ell \geq 1.$
\end{enumerate}

 Also, \ref{punto2}. implies that the second derivative of the covariance
function is continuous at the origin, and hence%
\begin{equation*}
1-\Gamma (\cos \theta ,0)=\Gamma ^{\prime \prime }(0,0)\theta ^{2}+o(\theta
^{2})\text{ , as }\theta \rightarrow 0\text{ .}
\end{equation*}%
We recall incidentally that%
\begin{equation*}
\frac{\partial ^{2}}{\partial \theta ^{2}}\Gamma (\cos \theta )=\sum_{\ell }%
\frac{2\ell +1}{4\pi }C_{\ell }P_{\ell }^{\prime \prime }(\cos \theta )\sin
^{2}\theta -\sum_{\ell }\frac{2\ell +1}{4\pi }C_{\ell }P_{\ell }^{\prime
}(\cos \theta )\cos \theta
\end{equation*}%
which by Cauchy-Schwartz inequality has a unique maximum for $\theta =0,$ given by%
\begin{equation*}
\Gamma ^{\prime \prime }(0,0)=\sum_{\ell }\frac{2\ell +1}{4\pi }\frac{%
\lambda _{\ell }}{2}C_{\ell }\text{ .}
\end{equation*}
For notational simplicity we prove that the $L^{2}$-expansion holds at $u=0;$
the proof for different values is identical. Our argument is quite standard, see for instance \cite{MRW}.

We know that the boundary
length is defined almost-surely by
\begin{eqnarray*}
\mathcal{L}_{0}(t) &\mathcal{=}&\lim_{\varepsilon \rightarrow 0}\mathcal{L}%
_{0;\varepsilon }(t)\text{ , } \\
\mathcal{L}_{0;\varepsilon }(t) &:&=\int_{\mathbb{S}^{2}}\delta
_{\varepsilon }(Z(x,t))\left\Vert \nabla Z(x,t)\right\Vert dx\text{ ,}
\end{eqnarray*}%
where 
\begin{equation*}
\delta _{\varepsilon }(Z(x,t)):=\left\{
\begin{array}{c}
0\text{ for }x:Z(x,t)>\varepsilon  \\
\frac{1}{2\varepsilon }\text{ for }x:Z(x,t)\leq \varepsilon
\end{array}%
\right.
\end{equation*}%
and the almost-sure convergence follows from the standard arguments \cite[Lemma 3.1]{RW08}. Indeed, because $\delta _{\varepsilon }$ is integrable
and $Z(\cdot)$ is Morse we have, using the coarea formula for a fixed $t\in
\mathbb{R}$ (see i.e., \cite{adlertaylor}, p.169)%
\begin{equation*}
\int_{\mathbb{S}^{2}}\delta _{\varepsilon }(Z(x,t))\left\Vert \nabla
Z(x,t)\right\Vert dx=\int_{\mathbb{R}}\left\{ \int_{Z^{-1}(s,t)}\delta
_{\varepsilon }(Z(x,t))dx\right\} ds
\end{equation*}%
and thus we obtain%
\begin{equation*}
\int_{\mathbb{R}}\left\{ \int_{Z^{-1}(s,t)}\delta _{\varepsilon
}(Z(x,t))dx\right\} ds=\frac{1}{2\varepsilon }\int_{-\varepsilon
}^{\varepsilon }\textrm{length}\left[ Z^{-1}(s,t)\right] ds\rightarrow \textrm{length}\left[
Z^{-1}(0,t)\right] \text{ , as }\varepsilon \rightarrow 0\text{ ,}
\end{equation*}%
because the function $s\rightarrow \textrm{length}\left[ Z^{-1}(s,t)\right] $ is
continuous for Morse functions, see \ref{punto1}. In particular,
(\ref{convergence}) holds a.s.

We now want to show that the convergence occurs also in the $L^{2}$ sense;
because convergence holds almost surely, it is enough to show that
\begin{equation*}
\lim_{\varepsilon \rightarrow 0}\mathbb{E}\left[ \mathcal{L}_{0;\varepsilon
}^{2}(t)\right] =\mathbb{E}\left[ \mathcal{L}_{0}^{2}(t)\right] \text{ .}
\end{equation*}%
Indeed, we have that%
\begin{eqnarray*}
\lim_{\varepsilon \rightarrow 0}\mathbb{E}\left[ (\mathcal{L}_{0}(t)-%
\mathcal{L}_{0;\varepsilon }(t))^{2}\right]  &=&\lim_{\varepsilon
\rightarrow 0}\mathbb{E}\left[ (\mathcal{L}_{0}^{2}(t)+\mathcal{L}%
_{0;\varepsilon }^{2}(t)-2\mathcal{L}_{0}(t)\mathcal{L}_{0;\varepsilon }(t)%
\right]  \\
&=&2\mathbb{E}\left[ \mathcal{L}_{0}^{2}(t)\right] -2\lim_{\varepsilon
\rightarrow 0}\mathbb{E}\left[ \mathcal{L}_{0}\mathcal{(}t\mathcal{)L}%
_{0;\varepsilon }(t)\right] =0\text{ ,}
\end{eqnarray*}%
because by Fatou's Lemma and Cauchy-Schwartz inequality
\begin{equation*}
\mathbb{E}\left[ \mathcal{L}_{0}^{2}(t)\right] \leq \lim_{\varepsilon
\rightarrow 0}\inf \mathbb{E}\left[ \mathcal{L}_{0}(t)\mathcal{L}%
_{0;\varepsilon }(t)\right] \leq \lim_{\varepsilon \rightarrow 0}\sqrt{%
\mathbb{E}\left[ \mathcal{L}_{0}^{2}(t)\right] \mathbb{E}\left[ \mathcal{L}%
_{0;\varepsilon }^{2}(t)\right] }=\mathbb{E}\left[ \mathcal{L}_{0}^{2}(t)%
\right] \text{ .}
\end{equation*}%
Note that, by the coarea formula%
\begin{eqnarray*}
\mathbb{E}\left[ \mathcal{L}_{0;\varepsilon }^{2}(t)\right]  &=&\mathbb{E}%
\left[ \left\{ \int_{\mathbb{S}^{2}}\left\{ \delta _{\varepsilon
}(Z(x,t))\left\Vert \nabla Z(x,t)\right\Vert \right\} dx\right\} ^{2}\right]
\\
&=&\mathbb{E}\left[ \left\{ \int_{\mathbb{R}}\int_{Z(x,t)=u}\delta
_{\varepsilon }(Z(x,t))dxdu\right\} ^{2}\right]  \\
&=&\mathbb{E}\left[ \left\{ \int_{\mathbb{R}}\mathcal{L}_{u}(t)\delta
_{\varepsilon }(u)du\right\} ^{2}\right] \text{ ,}
\end{eqnarray*}%
where as before by $\mathcal{L}_{u}(t)$ we denote the length of the set $%
Z(x,t)=u.$ We can now show that the application $u\rightarrow \mathbb{E}%
\left[ \mathcal{L}_{u}^{2}(t)\right] $, or more explicitly%
\begin{equation*}
\mathbb{E}\left[ \mathcal{L}_{u}^{2}(t)\right] =\int_{\mathbb{S}^{2}\times
\mathbb{S}^{2}}\mathbb{E}\left[ \left. \left\Vert \nabla
Z(x_{1},t)\right\Vert \left\Vert \nabla Z(x_{2},t)\right\Vert \right\vert
Z(x_{1},t)=u,Z(x_{2},t)=u\right] \phi
_{Z(x_{1},t),Z(x_{2},t)}(u,u)dx_{1}dx_{2}
\end{equation*}%
\begin{equation*}
=8\pi ^{2}\int_{0}^{\pi }\text{ }\mathbb{E}\left[ \left. \left\Vert \nabla
Z(N,t)\right\Vert \left\Vert \nabla Z(y(\theta ),t)\right\Vert \right\vert
Z(N,t)=u,Z(y(\theta ),t)=u\right] \phi _{Z(N,t),Z(y(\theta ),t)}(u,u)\sin
\theta d\theta \text{,}
\end{equation*}%
is continuous. The integrand function is obviously continuous in $u,$ and
thus to check the latter statement it is enough to use Dominated Convergence
Theorem. We first note that%
\begin{eqnarray*}
\phi _{Z(N,t),Z(y(\theta ),t)}(u,u)\sin \theta  &\leq &\phi
_{Z(N,t),Z(y(\theta ),t)}(0,0)\sin \theta  \\
&=&\frac{1}{2\pi \sqrt{1-\Gamma ^{2}(\cos \theta ,0)}}\sin \theta =O(1)\text{
,}
\end{eqnarray*}%
uniformly over $\theta $, because%
\begin{equation*}
1-\Gamma ^{2}(\cos \theta ,0)=(1+\Gamma (\cos \theta ,0))(1-\Gamma (\cos
\theta ,0))\geq \frac{c}{\theta ^{2}}\text{ .}
\end{equation*}%
On the other hand, to evaluate%
\begin{equation*}
\mathbb{E}\left[ \left. \left\Vert \nabla Z(x_{1},t)\right\Vert \left\Vert
\nabla Z(x_{2},t)\right\Vert \right\vert Z(Nt)=u,Z(y(\theta ),t)=u\right]
\end{equation*}%
we can use Cauchy-Schwartz inequality, and bound
\begin{eqnarray*}
&&\mathbb{E}\left[ \left. w_{i}^{2}\right\vert Z(N,t)=u,Z(y(\theta ),t)=u%
\right]  \\
&=&Var\left[ \left. w_{i}\right\vert Z(N,t)=u,Z(y(\theta ),t)=u\right]
+\left\{ \mathbb{E}\left[ \left. w_{i}\right\vert Z(N,t)=u,Z(y(\theta ),t)=u%
\right] \right\} ^{2},
\end{eqnarray*}%
for $i=1,2,3,4,$ where%
\begin{equation*}
\left(
\begin{array}{c}
w_{1} \\
w_{2} \\
w_{3} \\
w_{4}%
\end{array}%
\right) :=\left(
\begin{array}{c}
\nabla Z(x_{1},t) \\
\nabla Z(x_{2}t)%
\end{array}%
\right) \text{ .}
\end{equation*}%
It is a standard fact for Gaussian conditional distributions that%
\begin{equation*}
Var\left[ \left. w_{i}\right\vert Z(N,t)=u,Z(y(\theta ),t)=u\right] \leq Var%
\left[ w_{i}\right] <\infty \text{ .}
\end{equation*}%
Similarly, standard results on Gaussian conditional expectations give
(compare \cite{Wig}, Appendix A)%
\begin{equation*}
\mathbb{E}\left[ \left.
\begin{array}{c}
w_{1} \\
w_{2} \\
w_{3} \\
w_{4}%
\end{array}%
\right\vert Z(N,t)=u,Z(y(\theta )t)=u\right] =B_{\ell }^{T}(\theta )A_{\ell
}^{-1}(\theta )\left(
\begin{array}{c}
u \\
u%
\end{array}%
\right) \text{ ,}
\end{equation*}%
where%
\begin{eqnarray*}
B_{\ell }^{T}(\theta ) &=&\left(
\begin{array}{cc}
\sum_{\ell }\frac{2\ell +1}{4\pi }C_{\ell }P_{\ell }^{\prime }(\cos \theta
)\sin \theta  & 0 \\
0 & 0 \\
0 & \sum_{\ell }\frac{2\ell +1}{4\pi }C_{\ell }P_{\ell }^{\prime }(\cos
\theta )\sin \theta  \\
0 & 0%
\end{array}%
\right) \text{ , } \\
A_{\ell }^{-1}(\theta ) &=&\frac{1}{1-\Gamma ^{2}(\cos \theta ,0)}\left(
\begin{array}{cc}
1 & -\Gamma (\cos \theta ,0) \\
-\Gamma (\cos \theta ,0) & 1%
\end{array}%
\right) \text{ ,}
\end{eqnarray*}%
That we obtain for the the conditional expected value%
\begin{equation*}
\frac{1}{1-\Gamma ^{2}(\cos \theta ,0)}\left(
\begin{array}{cc}
-\sum_{\ell }\frac{2\ell +1}{4\pi }C_{\ell }P_{\ell }^{\prime }(\cos \theta
)\sin \theta  & \Gamma (\cos \theta )\sum_{\ell }\frac{2\ell +1}{4\pi }%
C_{\ell }P_{\ell }^{\prime }(\cos \theta )\sin \theta  \\
0 & 0 \\
-\Gamma (\cos \theta )\sum_{\ell }\frac{2\ell +1}{4\pi }C_{\ell }P_{\ell
}^{\prime }(\cos \theta )\sin \theta  & -\sum_{\ell }\frac{2\ell +1}{4\pi }%
C_{\ell }P_{\ell }^{\prime }(\cos \theta )\sin \theta  \\
0 & 0%
\end{array}%
\right) \left(
\begin{array}{c}
u \\
u%
\end{array}%
\right)
\end{equation*}%
\begin{equation*}
=\frac{1}{1-\Gamma ^{2}(\cos \theta )}\left(
\begin{array}{c}
u(\Gamma (\cos \theta ,0)-1)\sum_{\ell }\frac{2\ell +1}{4\pi }C_{\ell
}P_{\ell }^{\prime }(\cos \theta )\sin \theta  \\
0 \\
u(1-\Gamma (\cos \theta ,0))\sum_{\ell }\frac{2\ell +1}{4\pi }C_{\ell
}P_{\ell }^{\prime }(\cos \theta )\sin \theta  \\
0%
\end{array}%
\right)
\end{equation*}%
\begin{equation*}
=\frac{1}{1+\Gamma (\cos \theta ,0)}\left(
\begin{array}{c}
-u\sum_{\ell }\frac{2\ell +1}{4\pi }C_{\ell }P_{\ell }^{\prime }(\cos \theta
)\sin \theta  \\
0 \\
u\sum_{\ell }\frac{2\ell +1}{4\pi }C_{\ell }P_{\ell }^{\prime }(\cos \theta
)\sin \theta  \\
0%
\end{array}%
\right) \text{ .}
\end{equation*}%
This vector function is immediately seen to be uniformly bounded over $%
\theta ,$ whence the Dominated Convergence Theorem holds.
To conclude the proof, we note that%
\begin{eqnarray*}
\mathbb{E}\left[ \mathcal{L}_{0}^{2}(t)\right]  &\leq &\lim
\inf_{\varepsilon \rightarrow 0}\mathbb{E}\left[ \left\{ \int_{\mathbb{S}%
^{2}}\left\{ \delta _{\varepsilon }(Z(x,t))\left\Vert \nabla
Z(x,t)\right\Vert \right\} dx\right\} ^{2}\right]  \\
&=&\lim \inf_{\varepsilon \rightarrow 0}\mathbb{E}\left[ \mathcal{L}%
_{0;\varepsilon }^{2}(t)\right] \leq \lim \sup_{\varepsilon \rightarrow 0}%
\mathbb{E}\left[ \mathcal{L}_{0;\varepsilon }^{2}(t)\right]
\end{eqnarray*}%
(by Fatou's Lemma and definitions) and then%
\begin{eqnarray*}
&=&\lim \sup_{\varepsilon \rightarrow 0}\mathbb{E}\left[ \left\{ \int_{%
\mathbb{S}^{2}}\left\{ \delta _{\varepsilon }(Z(x,t))\left\Vert \nabla
Z(x,t)\right\Vert \right\} dx\right\} ^{2}\right]  \\
&=&\lim \sup_{\varepsilon \rightarrow 0}\mathbb{E}\left[ \left\{ \int_{%
\mathbb{R}}\mathcal{L}_{u}(t)\delta _{\varepsilon }(u)du\right\} ^{2}\right]
\end{eqnarray*}%
(by co-area formula) and%
\begin{equation*}
\leq \lim \sup_{\varepsilon \rightarrow 0}\int_{\mathbb{R}}\mathbb{E}\left[
\mathcal{L}_{u}^{2}(t)\right] \delta _{\varepsilon }(u)du=\mathbb{E}\left[
\mathcal{L}_{0}^{2}(t)\right] \text{ ,}
\end{equation*}%
by Cauchy-Schwartz, the definition of the $\delta $ function and continuity
of the application $u\rightarrow \mathbb{E}\left[ \mathcal{L}_{u}^{2}(t)%
\right] $. We have thus shown that $\mathbb{E}\left[ \mathcal{L}%
_{\varepsilon }^{2}(t)\right] \rightarrow \mathbb{E}\left[ \mathcal{L}%
_{0}^{2}(t)\right] ,$ and the proof is completed.
%Let us now prove the convergence in $L^2(\mathbb P)$. We have
%\begin{eqnarray*}
%\mathbb E[\mathcal L_u(t)^2]&\le& \liminf_{\epsilon\to 0} \mathbb E[\mathcal L_u^\epsilon(t)^2] \\
%&\le& \limsup_{\epsilon\to 0}  \mathbb E[\mathcal L_u^\epsilon(t)^2] \\
%&\le& \limsup_{\epsilon\to 0} \int_{\mathbb R} \mathbb E[\mathcal L^\epsilon_s(t)^2] \frac{1}{2\epsilon} 1_{[u-\epsilon,u+\epsilon]}(s)\,ds.
%\end{eqnarray*}
%Let us prove that the map $s\mapsto \mathbb E[\mathcal L^\epsilon_s(t)^2]$ is continuous at $s=u$. Since for every $t$, $|\Gamma(\cos d(x,y), t)|=1$ if and only if $x=y$, we can apply Kac-Rice formula and
%\begin{equation}
%\mathbb E[\mathcal L^\epsilon_s(t)^2] = finire
%\end{equation}
\end{proof}

\section{Chaotic decomposition}\label{app-chaos}

We need the following standard technical result, adapted from the nodal case \cite{MPRW16} to any threshold $u\in \mathbb R$.

\begin{lemma}\label{indicator function}
The following decomposition holds in $L^2(\Omega)$
\begin{equation*}
\frac{1}{2\varepsilon}{1}_{[u-\varepsilon ,u+\varepsilon ]}(Z
)=\sum_{l=0}^{+\infty }\frac{1}{l!}\beta_l^{\varepsilon}(u)\,H_{l}(Z),
\end{equation*}
where $Z\sim \mathcal N(0,1)$, and for $l \geq 1$
\[
\beta_l^{\varepsilon}(u)=-\frac{1}{2\varepsilon}\left (
\phi \left (u+\varepsilon \right) H_{l-1} \left (u+\varepsilon \right)-
\phi \left (u-\varepsilon \right) H_{l-1} \left (u-\varepsilon \right) \right ),
\]
while for $l=0$
\[
\beta_0^{\varepsilon} = \frac{1}{2\varepsilon} \int_{u-\varepsilon}^{u+\varepsilon} \phi(t)\,dt.
\]
Moreover, as $\varepsilon\to 0$,
$$
\beta_{l}^\varepsilon(u) \to \beta_l(u),
$$
where $\beta_l(u)$ coincides with (\ref{e:beta}) for every $l$.
\end{lemma}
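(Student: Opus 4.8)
The plan is to exploit the completeness of the normalized Hermite polynomials $\{H_l/\sqrt{l!}\}_{l\ge 0}$ as an orthonormal basis of $L^2(\mathbb{R},\phi(z)\,dz)$. First I would observe that the function $z\mapsto \frac{1}{2\varepsilon}1_{[u-\varepsilon,u+\varepsilon]}(z)$ is bounded, hence square integrable against the standard Gaussian density $\phi$. It therefore admits the orthogonal expansion claimed in the statement, converging in $L^2(\Omega)$, with coefficients given by the projections
\[
\beta_l^{\varepsilon}(u)=\int_{\mathbb{R}}\frac{1}{2\varepsilon}1_{[u-\varepsilon,u+\varepsilon]}(z)\,H_l(z)\phi(z)\,dz=\frac{1}{2\varepsilon}\int_{u-\varepsilon}^{u+\varepsilon}H_l(z)\phi(z)\,dz.
\]
For $l=0$, since $H_0\equiv 1$, this is immediately the stated formula for $\beta_0^\varepsilon$.

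Next, for $l\ge 1$ I would evaluate these coefficients in closed form by means of the elementary identity $H_l(z)\phi(z)=-\frac{d}{dz}\big[H_{l-1}(z)\phi(z)\big]$. This follows directly from the Rodrigues-type definition \eqref{Herm}: indeed $H_l(z)\phi(z)=(-1)^l\phi^{(l)}(z)$, so differentiating $H_{l-1}(z)\phi(z)=(-1)^{l-1}\phi^{(l-1)}(z)$ once gives $(-1)^{l-1}\phi^{(l)}(z)=-H_l(z)\phi(z)$. Substituting into the integral and applying the fundamental theorem of calculus yields
\[
\beta_l^{\varepsilon}(u)=-\frac{1}{2\varepsilon}\big[H_{l-1}(z)\phi(z)\big]_{z=u-\varepsilon}^{z=u+\varepsilon}=-\frac{1}{2\varepsilon}\left(\phi(u+\varepsilon)H_{l-1}(u+\varepsilon)-\phi(u-\varepsilon)H_{l-1}(u-\varepsilon)\right),
\]
which is exactly the expression in the statement.

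Finally, for the convergence as $\varepsilon\to 0$, I would read off the limits from the closed forms just obtained. For $l=0$, continuity of $\phi$ together with the mean-value property of the integral gives $\frac{1}{2\varepsilon}\int_{u-\varepsilon}^{u+\varepsilon}\phi(z)\,dz\to\phi(u)=\beta_0(u)$. For $l\ge 1$, the expression for $\beta_l^\varepsilon(u)$ is precisely a symmetric difference quotient of the smooth map $z\mapsto H_{l-1}(z)\phi(z)$, so it converges to $-\frac{d}{dz}\big[H_{l-1}(z)\phi(z)\big]\big|_{z=u}=H_l(u)\phi(u)=\beta_l(u)$, invoking the same derivative identity and recalling the definition \eqref{e:beta}.

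The argument is essentially routine and presents no genuine obstacle. The only point deserving care is the derivative identity $H_l\phi=-(H_{l-1}\phi)'$, which simultaneously drives the closed-form evaluation of the coefficients and the identification of their limits; everything else reduces to the completeness of the Hermite basis and elementary calculus.
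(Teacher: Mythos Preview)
Your proposal is correct and constitutes exactly the standard argument the paper has in mind: the lemma is stated without proof in the paper, which simply labels it a ``standard technical result, adapted from the nodal case \cite{MPRW16}''. Your derivation---Hermite completeness for the bounded function, the Rodrigues identity $H_l\phi=-(H_{l-1}\phi)'$ to evaluate the coefficients, and the difference-quotient observation for the limit---is precisely that standard route.
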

We are now ready to establish the chaotic decomposition of the average boundary length.

\begin{proof}[Proof of Proposition \ref{teoexpS}]
For fixed $x\in \mathbb S^2$, $t\in \mathbb R$,
the projection of the random variable
$$
\frac{1}{2\varepsilon} 1_{[u-\epsilon, u+\epsilon]}(Z(x,t)) \|\widetilde \nabla Z(x,t)\|
$$
onto the chaos $C_{q}$, for $q\geq 0$, equals
$$
\sum_{m=0}^{q}\sum_{k=0}^{m} \frac{\alpha _{k,m-k}\beta^\epsilon _{q-m}(u)}{%
(k)!(m-k)!(q-m)!}
H_{q-m}(Z(x,t))H_{k}(\widetilde\partial_{1;x} Z(x,t))H_{m-k}(\widetilde{\partial}%
_{2;x} Z(x,t)),
$$
where $\lbrace \beta_l^\epsilon(u)\rbrace_l$ is the collection of chaotic coefficients found in
Lemma \ref{indicator function}.
Since $\int_{0,T]}\int_{\mathbb{S}^2}\,dxdt<\infty$, standard arguments based on Jensen's
inequality and dominated convergence yield that $\mathcal C_T^\epsilon(u)[0] = 0$ while for $q\ge 1$
\begin{eqnarray*}
\mathcal C_T^{\varepsilon}(u)[q]= \sum_{m=0}^{q}\sum_{k=0}^{m} \frac{\alpha _{k,m-k}\beta^\epsilon _{q-m}(u)}{%
(k)!(m-k)!(q-m)!} \int_{0,T]}\int_{\mathbb{S}^2} H_{q-m}(Z(x,t))H_{k}(\widetilde\partial_{1;x} Z(x,t))H_{m-k}(\widetilde{\partial}%
_{2;x} Z(x,t))\,dx dt
\end{eqnarray*}
in $L^2(\Omega)$.
In view of Lemma \ref{approx2} and Lemma \ref{indicator function}, the random variable $\mathcal C_T(u)$ being in the Wiener chaos, one has that for every $q$,
as $\varepsilon \to 0$, $\mathcal C_T^\varepsilon(u)[q]$
necessarily converge to the $q$-th chaotic component of $\mathcal C_T(u)$, that is, $\mathcal C_T(u)[q]$ as in (\ref{e:ppS}), still in $L^2(\Omega)$.
\end{proof}

\

\noindent Dipartimento di Matematica, Universit\`{a} degli Studi di Roma
``Tor Vergata''\newline
E-mail address: \texttt{marinucc@mat.uniroma2.it}

\medskip

\noindent Dipartimento di Matematica e Applicazioni, Universit\`{a} degli
Studi di Milano-Bicocca\newline
E-mail address: \texttt{maurizia.rossi@unimib.it}

\medskip

\noindent Dipartimento di Matematica e Applicazioni, Universit\`{a} degli Studi di
Napoli ``Federico II''\newline
E-mail address: \texttt{anna.vidotto@unina.it}

\end{document}